 \newtheorem{theorem}{Theorem}[section]
 \newtheorem{proposition}[theorem]{Proposition}
 \newtheorem{lemma}[theorem]{Lemma}
 \theoremstyle{definition}
 \title{Speed of extinction for CSBP in the  subcritical L\'evy environment:  strongly and intermediate cases}
 \author{Natalia Cardona-Tob\'on and Juan Carlos Pardo}
\begin{document}
 	
 	\begin{center}
 		{\Large \bf Speed of extinction for continuous state branching processes in a weakly subcritical L\'evy environment}\\[5mm]
 		
 		\vspace{0.7cm}
 		\textsc{Natalia Cardona-Tob\'on\footnote{Institute for Mathematical Stochastics, University of G\"ottingen.  Goldschmidtstrasse 7 C.P. 37077, G\"ottingen, Germany,  {\tt natalia.cardonatobon@uni-goettingen.de}}}  and \textsc{Juan Carlos Pardo\footnote{Centro de Investigaci\'on en Matem\'aticas. Calle Jalisco s/n. C.P. 36240, Guanajuato, M\'exico, {\tt jcpardo@cimat.mx}}}
 		
 	\end{center}
 	
 	\vspace{0.3cm}

 	\begin{abstract}
 		\noindent 
 		In this manuscript, we continue with the systematic study of   the  speed  of extinction of continuous state branching processes in L\'evy environments under more general branching mechanisms. Here, we deal with  the weakly subcritical regime under the assumption  that the branching mechanism is regularly varying.
 We extend recent results of  Li and Xu \cite{li2018asymptotic} and Palau et al. \cite{palau2016asymptotic}, where it is assumed that the branching mechanism is stable  and complement the recent articles of Bansaye et al. \cite{bansaye2021extinction} and by the authors in \cite{CTP21}, where the critical   and  the strongly and intermediate subcritical cases were treated, respectively. Our methodology combines a path analysis of the branching process together with its L\'evy environment,   fluctuation theory for L\'evy processes and  the asymptotic behaviour of exponential functionals of L\'evy processes. Our approach is inspired by Afanasyev et al.\cite{afanasyev2012}, where the  discrete analogue was obtained, and by  \cite{bansaye2021extinction} and \cite{CTP21}.

 		\par\medskip
 		\footnotesize
 		\noindent{\emph{2020 Mathematics Subject Classification}:}
 		60J80, 60G51, 60H10, 60K37
 		\par\medskip
 		\noindent{\emph{Keywords:} Continuous state branching processes;  L\'evy processes; L\'evy processes conditioned to stay positive; random environment; long-term behaviour; extinction.} 
 	\end{abstract}

 	\section{Introduction and main results}
 	
 	In this manuscript we are interested in continuous state branching processes in random environments, in particular  when the   environment is driven by  a L\'evy process. This family of processes is known as \emph{continuous state branching processes in L\'evy environment} (or CBLEs for short) and they have been constructed independently   by He et al.  \cite{he2018continuous} and  Palau and Pardo \cite{palau2018branching},  as the unique non-negative strong solution of a stochastic differential equation whose linear term is driven by a L\'evy process.  
	
	The classification of  the asymptotic behaviour of rare events of CBLEs, such as the survival  probability, depends on the long-term behaviour of the environment.  In other words an auxiliary L\'evy process, which is associated to the environment, leads to the usual classification for the long-term behaviour of branching processes. To be more precise, the CBLE is called \textit{supercritical}, \textit{critical} or \textit{subcritical} accordingly as the auxilliary  L\'evy process drifts to $\infty$, oscillate or drifts to $-\infty$.  Furthermore, in the subcritical regime another phase transition arises which depends on whether the L\'evy process drifts to $\infty$, oscillate or drifts to $-\infty$ under a suitable exponential change of measure. These regimes are known in the literature as \textit{strongly}, \textit{intermediate} and \textit{weakly subcritical} regimes, respectively.

 	The study of the long-term behaviour of CBLEs  has attracted considerable attention in the last decade, see for instance  Bansaye et al. \cite{bansaye2013extinction}, B\"oinghoff and Hutzenthaler  \cite{boeinghoff2011branching}, He et al.  \cite{he2018continuous}, Li and Xu \cite{li2018asymptotic}, Palau and Pardo \cite{palau2017continuous, palau2018branching}, Palau et al. \cite{palau2016asymptotic} and Xu \cite{xu2021}.  All the aforementioned studies deal with the case when the branching mechanism is associated to a stable jump structure or a Brownian component on the branching term.  For simplicity on exposition we will call such branching mechanisms as {\it stable}. Bansaye et al. \cite{bansaye2013extinction} determined the long-term behaviour for  stable CBLEs when the random environment is driven by a L\'evy process with bounded variation paths. Palau and Pardo \cite{palau2017continuous}  studied the case when the random environment is driven by a Brownian motion with drift. Afterwards, Li and Xu \cite{li2018asymptotic} and Palau et al.  \cite{palau2018branching}, independently,  extended this result to the case when the environment is driven by a general L\'evy process.  More recently, Xu \cite{xu2021} provided an exact description for the speed of the extinction probability for CBLEs  with stable branching mechanism and where the L\'evy environment is heavy-tailed.  It is important to note that all these   manuscripts exploited the explicit knowledge of the survival probability which is given in terms of  exponential functionals of L\'evy processes.

	Much less is known about the long-term behaviour of CBLEs when the associated branching mechanism is  more general. Up to our knowledge, the only studies in this direction are  Bansaye et al.  \cite{bansaye2021extinction} and Cardona-Tob\'on and Pardo \cite{CTP21}, where  the speed of extinction for more general branching mechanisms  is studied. More precisely, Bansaye et al. \cite{bansaye2021extinction} focus on the critical case (oscillating L\'evy environments satisfying the so-called Spitzer's condition at $\infty$) and relax the assumption that the branching mechanism is  stable.  Shortly afterwards,  Cardona-Tob\'on and Pardo \cite{CTP21} studied the speed of extinction of CBLEs in the  strongly and intermediate subcritical regimes. Their methodology combines a path analysis of the branching process together with its L\'evy environment, fluctuation theory for L\'evy processes and the asymptotic behaviour of exponential functionals of L\'evy processes. 

		In this manuscript we continue with such systematic  study on the asymptotic behaviour of the survival probability for the CBLE under more general branching mechanisms   but now in the weakly subcritical regime. It is important to note that extending such asymptotic behaviour to more general branching mechanism is not as easy as we might think since we required to control a functional of the associated L\'evy process to the environment which is somehow quite involved. Moreover, contrary to the discrete case, the state 0 can be polar
and the process might become very close to 0 but never reach this point. To focus on the absorption event, we use Grey's condition which guarantees that 0 is accessible.

Our main contribution  is to provide its precise asymptotic behaviour  under some  assumptions on the auxiliary L\'evy process  and the branching mechanism.  		
		In particular, we obtain that the speed of the survival probability  decays exponentially with a polynomial factor of order ${3/2}$ (up to a multiplicative constant which is computed explicitly and depends on the limiting behaviour of the survival probability  given favorable environments). In particular, for the stable case we recover the results of \cite{li2018asymptotic}  where the limiting constant is given in terms of the exponential functional of the L\'evy process. In order to deduce such asymptotic behaviour, we combine  the approach developed in  \cite{{afanasyev2012}}, for the discrete time setting, with  fluctuation theory of L\'evy processes and a similar strategy developed by Bansaye et al. in \cite{bansaye2021extinction}. A key point in our arguments is to rewrite the probability of  survival under a suitable change of measure which is associated to an exponential martingale of the L\'evy environment. In order to do so, the existence of some exponential moments for the L\'evy environment is required. 
	Under this exponential change of measure  the L\'evy environment now oscillates
	and we can apply a similar strategy developed by Bansaye et al. in \cite{bansaye2021extinction} to study the extinction rate for CBLEs  in the critical regime. More precisely, under this new measure, we split the event of survival in two parts, that is when the running infimum is either negative or positive and then we show that only paths of the L\'evy process with a positive running infimum give substantial contribution to the speed of survival. In this regime, we assume that the  branching mechanism is regularly varying and a   lower bound for the branching mechanism which allow us to control the event of survival under favorable environments and unfavourable environments, respectively.  Our  results  complements those in \cite{bansaye2021extinction, CTP21}.

 	\subsection{Main results}\label{sec_defandprop}
 	Let $(\Omega^{(b)}, \mathcal{F}^{(b)}, (\mathcal{F}^{(b)}_t)_{t\geq 0}, \mathbb{P}^{(b)})$ be a filtered probability space satisfying the usual hypothesis on which we may construct the demographic (branching) term of the model that we are interested in. We suppose that $(B_t^{(b)}, t\geq 0)$ is a $(\mathcal{F}_t^{(b)})_{t\geq 0}$-adapted standard Brownian motion and  $N^{(b)}(\mathrm{d} s , \mathrm{d} z, \mathrm{d} u)$ is a  $(\mathcal{F}_t^{(b)})_{t\geq 0}$-adapted Poisson random measure on $\mathbb{R}^3_+$  with intensity $\mathrm{d} s \mu(\mathrm{d} z)\mathrm{d} u$ where  $\mu$ satisfies	\begin{equation}\label{eq_finitemean}
 		\int_{(0,\infty)}(z\wedge z^2)\mu(\mathrm{d}z)<\infty.
 	\end{equation} 
	We denote by $\widetilde{N}^{(b)}(\mathrm{d} s , \mathrm{d} z, \mathrm{d} u)$ for the compensated version of $N^{(b)}(\mathrm{d} s , \mathrm{d} z, \mathrm{d} u)$. Further, we also introduce  the  so-called branching mechanism $\psi$, a convex function with the following L\'evy-Khintchine representation
 \begin{equation}
 		\psi(\lambda) =\psi'(0+) \lambda  + \varrho^2 \lambda^2 + \int_{(0,\infty)} \big(e^{-\lambda x} - 1 + \lambda x \big) \mu(\mathrm{d} x), \qquad \lambda \geq 0,
 	\end{equation}
where $\varrho\ge 0$. Observe that the term $\psi'(0+)$ is well defined (finite) since condition \eqref{eq_finitemean} holds. Moreover, the function $\psi$ describes the stochastic dynamics of the population.
	
	On the other hand, for the environmental term, we consider another filtered probability space $(\Omega^{(e)}, \mathcal{F}^{(e)},(\mathcal{F}^{(e)}_t)_{t\geq 0}, \mathbb{P}^{(e)})$   satisfying the usual hypotheses. Let us consider $\sigma \geq 0$ and $\alpha$  real constants;  and $\pi$  a measure concentrated on $\mathbb{R}\setminus\{0\}$ such that $$\int_{\mathbb{R}} (1\land z^2)\pi(\mathrm{d} z)<\infty.$$ Suppose that \ $(B_t^{(e)}, t\geq 0)$ \ is a $(\mathcal{F}_t^{(e)})_{t\geq0}$ - adapted standard Brownian motion, $N^{(e)}(\mathrm{d} s, \mathrm{d} z)$ is a $(\mathcal{F}_t^{(e)})_{t\geq 0}$ - Poisson random measure on $\mathbb{R}_+ \times \mathbb{R}$ with intensity $\mathrm{d} s \pi(\mathrm{d} z)$, and $\widetilde{N}^{(e)}(\mathrm{d} s, \mathrm{d} z)$ its compensated version. We denote by $S=(S_t, t\geq 0)$ a L\'evy process, that is  a process with  stationary and independent increments and  c\`adl\`ag paths, with  the following L\'evy-It\^o decomposition
 	\begin{equation*}\label{eq_ambLevy}
 		S_t = \alpha t + \sigma B_t^{(e)} + \int_{0}^{t} \int_{(-1,1)} (e^z - 1) \widetilde{N}^{(e)}(\mathrm{d} s, \mathrm{d} z) + \int_{0}^{t} \int_{(-1,1)^c} (e^z - 1) N^{(e)}(\mathrm{d} s, \mathrm{d} z).
 	\end{equation*}
 	Note that   $S$ is a L\'evy process with no jumps smaller  than -1.

 	In our setting, we are considering independent processes for the demographic and  environmental terms. More precisely, we work now on the space $(\Omega, \mathcal{F}, (\mathcal{F}_t)_{t\geq 0}, \mathbb{P})$ the direct product of the two probability spaces defined above, that is to say, $\Omega := \Omega^{(e)} \times \Omega^{(b)}, \mathcal{F}:= \mathcal{F}^{(e)}\otimes \mathcal{F}^{(b)},  \mathcal{F}_t:=  \mathcal{F}^{(e)}_t \otimes  \mathcal{F}^{(b)}_t$ for $t\geq0$, $ \mathbb{P}:=\mathbb{P}^{(e)} \otimes \mathbb{P}^{(b)} $. Therefore $(Z_t, t\geq 0)$,
   the \textit{continuous-state branching process  in the L\'evy environment $(S_t, t\geq 0)$} is defined on  $(\Omega, \mathcal{F}, (\mathcal{F}_t)_{t\geq 0}, \mathbb{P})$ as the unique non-negative strong solution of the following stochastic differential equation 
	\begin{equation}\label{CBILRE}
	\begin{split}
 		Z_t = &Z_0 - \psi'(0+) \int_{0}^{t} Z_s \mathrm{d} s + \int_{0}^{t} \sqrt{2\varrho^2 Z_s} \mathrm{d} B_s^{(b)}    \\   &\hspace{4cm} +  \int_{0}^{t} \int_{(0,\infty)} \int_{0}^{Z_{s-}}z \widetilde{N}^{(b)} (\mathrm{d} s, \mathrm{d} z, \mathrm{d} u)+ \int_{0}^{t} Z_{s-} \mathrm{d} S_s.
		\end{split}
 	\end{equation}
 	According to  Theorem 3.1 in He et al. \cite {he2018continuous} or Theorem 1 in Palau and Pardo \cite{palau2018branching}, the equation has a unique positive strong solution which is not explosive. 
 An important property satisfied by $Z$ is that, given the environment, it inherits the branching property of the underlying continuous state branching process. We denote by $\mathbb{P}_z$, for its law starting from $z\ge 0$.
 	
 	The analysis of the process $Z$ is deeply related to the behaviour and fluctuations of the L\'evy process $\xi=(\xi_t, t\ge 0)$, defined as follows
 	\begin{equation}\label{eq_envir2}
 		\xi_t = \overline{\alpha} t + \sigma B_t^{(e)} + \int_{0}^{t} \int_{(-1,1)} z \widetilde{N}^{(e)}(\mathrm{d} s, \mathrm{d} z) + \int_{0}^{t} \int_{(-1,1)^c}z N^{(e)}(\mathrm{d} s, \mathrm{d} z),
 	\end{equation}
 	where
 	\begin{equation*}
 		\overline{\alpha} := \alpha -\psi'(0+)-\frac{\sigma^2}{2} - \int_{(-1,1)} (e^z -1 -z) \pi(\mathrm{d} z).
 	\end{equation*}
 	Note that, both processes $S$ and $\xi$ generate the same filtration. 
	In addition, we see that the drift term $\overline{\alpha}$ provides the interaction between the demographic and environmental parameters. We denote by $\mathbb{P}^{(e)}_x$,  for the law of the process $\xi$ starting from $x\in \mathbb{R}$ and when $x=0$, we use the notation $\mathbb{P}^{(e)}$ for $\mathbb{P}^{(e)}_0$.  
 	
 		Further, under  condition \eqref{eq_finitemean}, the process $\left(Z_t e^{-\xi_t},  t\geq 0\right)$ is a quenched martingale implying that  for any $t\geq 0$ and $z\geq 0$,
 	\begin{equation}\label{martingquenched}
 		\mathbb{E}_z[Z_t \ \vert \ S]=ze^{\xi_t}, \ \qquad \mathbb{P}_z \ \textrm{-a.s},
 	\end{equation}
 	see Bansaye et al. \cite{bansaye2021extinction}.   In other words, the process $\xi$
 	plays an analogous role as the random walk associated to the logarithm  of  the mean of the offsprings  in the discrete time framework
 	and  leads to the usual classification for the long-term behaviour of branching processes. More precisely, we say that the 
 	process $Z$ is subcritical,  critical or supercritical accordingly as  $\xi$ drifts to $-\infty$, oscillates or drifts to $+\infty$. 
 	
 	In addition, under  condition \eqref{eq_finitemean}, there is  another quenched martingale associated to  $(Z_t e^{-\xi_t}, t\geq 0)$ which allow us to compute its Laplace transform, see for instance Proposition 2 in \cite{palau2018branching} or  Theorem 3.4 in \cite{he2018continuous}. In order to compute the Laplace transform of $Z_t e^{-\xi_t}$, we first introduce  the unique  positive solution $(v_t(s,\lambda, \xi), s\in [0,t])$ of the following backward differential equation 
 	\begin{equation}\label{eq_BDE}
 		\frac{\partial}{\partial s} v_t(s,\lambda, \xi) = e^{\xi_s} \psi_0(v_t(s, \lambda, \xi)e^{-\xi_s}), \qquad v_t(t,\lambda, \xi) = \lambda,
 	\end{equation}
 	where 
 	\begin{equation}\label{eq_phi0}
 		\psi_0(\lambda)= \psi(\lambda)- \lambda \psi'(0+)=\varrho^2 \lambda^2 + \int_{(0,\infty)} \big(e^{-\lambda x} - 1 + \lambda x\big) \mu(\mathrm{d} x) .
 	\end{equation}
 	Then the process $\left(\exp\{-v_t(s,\lambda, \xi) Z_s e^{-\xi_s}\},  0\le s\le t\right)$ is a quenched martingale implying that for any $\lambda\geq 0$ and $t\geq s\geq 0$, 
 	\begin{equation}\label{eq_Laplace}
 		\mathbb{E}_{(z,x)}\Big[\exp\{-\lambda Z_t e^{-\xi_t}\}\  \Big|\, S, \mathcal{F}^{(b)}_s\Big] = \exp\{-Z_se^{-\xi_s}v_t(s, \lambda, \xi)\}.
 	\end{equation}
We may think of $v_t(\cdot, \cdot, \xi)$ as an inhomogeneous cumulant semigroup determined by the time-dependent branching mechanism $(s,\theta)\mapsto e^{\xi_s} \psi_0(\theta e^{-\xi_s})$. The functional $v_t(\cdot, \cdot, \xi)$ is quite involved, except for a few cases (stable and Nevue cases), due to the stochasticity coming from the time-dependent branching mechanism which makes it  even not so easy to control. 

 	In the what follows, we assume that $\xi$ is not a compound Poisson process to avoid the possibility  that  the process visits the same maxima or minima at distinct times which can make our analysis more involved. Moreover, we also require the following exponential moment condition, 
 	\begin{equation}\label{eq_moments}\tag{\bf{H1}}
 		\textrm{there exists }\quad  \vartheta > 1\ \text{ such that } \  \int_{\{|x|>1\}}e^{\lambda x}\pi(\mathrm{d} x)<\infty, \quad  \textrm{for all} \quad \lambda \in [0, \vartheta],
 	\end{equation}
 which is equivalent to the existence of  the Laplace transform on $[0, \vartheta]$, i.e. $\mathbb{E}^{(e)}[e^{\lambda \xi_1}]$ is well defined for $\lambda\in [0, \vartheta]$ (see for instance Lemma 26.4 in Sato \cite{ken1999levy}). The latter implies that we can  introduce  the Laplace exponent of $\xi$  as follows 
 \[
 \Phi_\xi(\lambda):=\log \mathbb{E}^{(e)}[e^{\lambda \xi_1}], \qquad \textrm{ for }\quad \lambda\in [0, \vartheta]. 
 \]
 Again from Lemma 26.4  in \cite{ken1999levy}, we also have $\Phi_\xi(\lambda)\in C^\infty$ and $\Phi_\xi^{\prime\prime}(\lambda)>0$, for $\lambda\in (0, \vartheta)$.
 	
 	Another object which will be relevant in our analysis  is the so-called exponential martingale associated to the L\'evy process $\xi$, i.e.
 	\[
 	M^{(\lambda)}_t=\exp\Big\{\lambda\xi_{t}-t\Phi_\xi(\lambda)\Big\}, \qquad t\ge 0,
 	\]
 	which is well-defined for $\lambda\in [0,\vartheta]$ under assumption \eqref{eq_moments}. It is well-known that $(M^{(\lambda)}_t, t\ge 0)$ is  a $(\mathcal{F}^{(e)}_t)_{t\ge 0}$-martingale and that it induces a change of measure which is known as the Esscher transform, that is to say
 	\begin{equation}\label{eq_medida_Essher}
 		\mathbb{P}^{(e,\lambda)}(\Lambda):= \mathbb{E}^{(e)}\Big[M_t^{(\lambda)} \mathbf{1}_{\Lambda}\Big], \qquad \textrm{for}\quad \Lambda\in \mathcal{F}^{(e)}_t.
 	\end{equation}
	Let us introduce the  dual process $\widehat{\xi}=-\xi$  which is also a L\'evy process satisfying that for any fixed time $t>0$, the processes 
 	\begin{equation}\label{eq_lemmaduality}
 		(\xi_{(t-s)^-}-\xi_{t}, 0\le s\le t)\qquad  \textrm{and}\qquad (\widehat{\xi}_s, 0\le s\le t),
 	\end{equation}
 	have the same law, with the convention that $\xi_{0^-}=\xi_0$ (see for instance Lemma 3.4 in Kyprianou \cite{kyprianou2014fluctuations}). For every $x\in \mathbb{R}$,  let $\widehat{\mathbb{P}}_x^{(e)}$ be the law of $x+\xi$ under $\widehat{\mathbb{P}}^{(e)}$, that is the law of $\widehat{\xi}$ under $\mathbb{P}_{-x}^{(e)}$.  We also  introduce the running infimum  and supremum  of $\xi$, by
 	\begin{equation*}
 		\underline{\xi}_t = \inf_{0\leq s\leq t} \xi_s \qquad \textrm{ and } \qquad \overline{\xi}_t = \sup_{0 \leq s \leq t} \xi_s, \qquad \textrm{for} \qquad t \geq 0.
 	\end{equation*}
Similarly to the critical case, which was studied by  Bansaye et al. \cite{bansaye2021extinction}, the asymptotic analysis  of the weakly subcritical regime requires the notion of the renewal functions $U^{(\lambda)}$ and $\widehat{U}^{(\lambda)}$ under $\mathbb{P}^{(e,\lambda)}$, which are associated to the supremum and infimum of $\xi$, respectively. See Section \ref{preLevy} for a proper definition (or the references therein).
 	
 	 For our purposes, we also require the notion of conditioned  L\'evy processes  and continuous state branching processes in a conditioned L\'evy environment.  	 Let us define the probability $\mathbb{Q}_{x}$ associated  to the L\'evy process $\xi$ started at $x>0$ and killed  at time $\zeta$ when it 
 	first enters $(-\infty, 0)$, that is to say
 	$$ \mathbb{Q}_{x}\big[f(\xi_t)\mathbf{1}_{\{\zeta>t\}}\Big ]:= \mathbb{E}^{(e)}_{x}\Big[f(\xi_t)\mathbf{1}_{\left\{\underline{\xi}_t> 0\right\}}\Big], $$
 	where $f:\mathbb{R}_+\to \mathbb{R}$ is a measurable function.  
 	
 	According to Chaumont and Doney \cite[Lemma 1]{chaumont2005levy}, under the assumption that $\xi$ does not drift towards $-\infty$,  we have that the renewal function $\widehat{U}:=\widehat{U}^{(0)}$ is invariant for the killed process. In other words,   for all $x> 0$ and $t\ge 0$,
 	\begin{equation}
 		\label{eq_fctharm}
 		\mathbb{Q}_x\left[\widehat{U}(\xi_t)\mathbf{1}_{\{\zeta>t\}}\right]=\mathbb{E}^{(e)}_x\left[\widehat{U}(\xi_t)\mathbf{1}_{\left\{\underline{\xi}_t> 0\right\}}\right]=\widehat{U}(x).
 	\end{equation}
 	Hence, from the  Markov property, we deduce that  $(\widehat{U}(\xi_t)\mathbf{1}_{\{\underline{\xi}_t>0\}}, t\geq 0)$ is a martingale with respect to $(\mathcal{F}_t^{(e)})_{t\geq 0}$. We may now use this martingale to define a change of measure corresponding to the law of $\xi$ \textit{conditioned to stay positive} as a Doob-$h$ transform. 
 	Under the assumption that $\xi$  does not drift towards $-\infty$, the law of the process $\xi$ conditioned to stay positive is defined as follows, for $\Lambda \in \mathcal{F}^{(e)}_t$ and $x>0$, 
 	\begin{equation} \label{defPuparrowx}
 		\mathbb{P}^{(e),\uparrow}_{x} (\Lambda)
 		:=\frac{1}{\widehat{U}(x)}\mathbb{E}^{(e)}_{x}\left[\widehat{U}(\xi_t) \mathbf{1}_{\left\{\underline{\xi}_t> 0\right\}} \mathbf{1}_{\Lambda}\right].
 	\end{equation}
 	On the other hand, by duality,  under the assumption that $\xi$  does not drift towards $\infty$, the law of the process $\xi$ \textit{conditioned to stay negative} is defined for $x<0$, as follows
 	\begin{equation} \label{defdownarrowx}
 		\mathbb{P}^{(e),\downarrow}_{x} (\Lambda)
 		:=\frac{1}{U(-x)}\mathbb{E}^{(e)}_{x}\left[U(-\xi_t) \mathbf{1}_{\left\{\overline{\xi}_t<0\right\}} \mathbf{1}_{\Lambda}\right].
 	\end{equation}
L\'evy processes conditioned to stay positive (and negative) are well studied objects. For a complete overview of this theory the reader is referred to \cite{bertoin1996levy, chaumont1996conditionings, chaumont2005levy} and references therein.

 	Similarly to the definition of L\'evy processes conditioned to stay positive (and negative) given above,  
	we may  introduce a continuous state branching processes in a L\'evy environment conditioned to stay positive as a Doob-$h$ transform. The aforementioned process was first investigated by Bansaye et al.  \cite{bansaye2021extinction} with the aim to study the survival event in a critical L\'evy environment. In other words, they proved the following result.
 	\begin{lemma}[Bansaye et. al. \cite{bansaye2021extinction}]\label{teo_bansayemtg}
 		Let us assume that $z,x >0$. Under the law $\mathbb{P}_{(z,x)}$, the process $(\widehat{U}(\xi_t)\mathbf{1}_{\{\underline{\xi}_t> 0\}}, t\geq 0)$ is a martingale with respect to $(\mathcal{F}_t)_{t\geq 0}$. Moreover the following Doob-$h$ transform holds, for $\Lambda \in \mathcal{F}_t$, 
 		\begin{equation*}
 			\mathbb{P}^{\uparrow}_{(z,x)}(\Lambda):=\frac{1}{\widehat{U}(x)}\mathbb{E}_{(z,x)}\big[\widehat{U}(\xi_t)\mathbf{1}_{\{\underline{\xi}_t> 0\}}\mathbf{1}_\Lambda\big],
 		\end{equation*}
 		defines   a continuous state branching process   in a L\'evy environment $\xi$ conditioned to stay positive.
 	\end{lemma}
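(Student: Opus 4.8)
My plan is to reduce everything to the environmental martingale identity \eqref{eq_fctharm} by exploiting the product structure $\mathbb{P}=\mathbb{P}^{(e)}\otimes\mathbb{P}^{(b)}$, $\mathcal{F}_t=\mathcal{F}^{(e)}_t\otimes\mathcal{F}^{(b)}_t$. First I would record that $\widehat{U}(\xi_t)\mathbf{1}_{\{\underline{\xi}_t>0\}}$ depends only on the environmental coordinate, hence is $(\mathcal{F}^{(e)}_t)_{t\ge0}$-adapted; it is integrable (its expectation under $\mathbb{P}^{(e)}_x$ equals $\widehat U(x)$ by \eqref{eq_fctharm}, and since $\xi$ does not drift to $-\infty$ the renewal function $\widehat U$ is finite, positive and nondecreasing on $(0,\infty)$); and, as $x>0$, it equals $\widehat U(x)$ at time $0$. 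Since $\mathcal{F}^{(b)}_s$ is independent of $\mathcal{F}^{(e)}$, conditioning an $\mathcal{F}^{(e)}_t$-measurable integrable random variable on $\mathcal{F}_s$ is the same as conditioning it on $\mathcal{F}^{(e)}_s$; combining this with \eqref{eq_fctharm} and the Markov property of $\xi$ gives, for $0\le s\le t$,
\[
\mathbb{E}_{(z,x)}\big[\widehat{U}(\xi_t)\mathbf{1}_{\{\underline{\xi}_t>0\}}\,\big|\,\mathcal{F}_s\big]=\mathbf{1}_{\{\underline{\xi}_s>0\}}\,\mathbb{E}^{(e)}_{\xi_s}\big[\widehat{U}(\xi_{t-s})\mathbf{1}_{\{\underline{\xi}_{t-s}>0\}}\big]=\widehat{U}(\xi_s)\mathbf{1}_{\{\underline{\xi}_s>0\}},
\]
which is the asserted $(\mathcal{F}_t)_{t\ge0}$-martingale property.

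Granted this, the Doob $h$-transform is essentially formal: the prescription $\Lambda\mapsto\widehat U(x)^{-1}\mathbb{E}_{(z,x)}[\widehat U(\xi_t)\mathbf{1}_{\{\underline\xi_t>0\}}\mathbf{1}_\Lambda]$ defines a probability measure on each $\mathcal{F}_t$ (total mass one by the martingale property at $s=0$), the family is consistent in $t$ by the tower property together with the martingale identity above, and on the (Polish) path space on which $(Z,\xi)$ is realised it extends to a unique probability measure $\mathbb{P}^{\uparrow}_{(z,x)}$ on $\mathcal{F}_\infty$.

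The substantive part is to recognise what we have built, namely that under $\mathbb{P}^{\uparrow}_{(z,x)}$ the environment $\xi$ has law $\mathbb{P}^{(e),\uparrow}_x$ while, conditionally on the environment, $Z$ still obeys the quenched dynamics of \eqref{CBILRE}. I would verify two points. (i) \emph{The environment:} for $\Lambda\in\mathcal{F}^{(e)}_t$ the branching coordinate integrates to one, so $\mathbb{P}^{\uparrow}_{(z,x)}(\Lambda)=\widehat U(x)^{-1}\mathbb{E}^{(e)}_x[\widehat U(\xi_t)\mathbf{1}_{\{\underline\xi_t>0\}}\mathbf{1}_\Lambda]=\mathbb{P}^{(e),\uparrow}_x(\Lambda)$ by \eqref{defPuparrowx}; hence $\xi$ under $\mathbb{P}^{\uparrow}_{(z,x)}$ is $\xi$ conditioned to stay positive. (ii) \emph{The quenched law of $Z$ is unchanged:} for bounded measurable $F$ of $(Z_u)_{0\le u\le t}$ and bounded $\mathcal{F}^{(e)}_t$-measurable $H$, I would condition on $S$ and use that, by construction, the quenched law of $Z$ given $S$ is a measurable functional of the environment path alone, while $\widehat U(\xi_t)\mathbf{1}_{\{\underline\xi_t>0\}}$ and $H$ are functions of $S$, obtaining
\[
\mathbb{E}^{\uparrow}_{(z,x)}\big[F\,H\big]=\frac{1}{\widehat U(x)}\,\mathbb{E}^{(e)}_x\Big[H\,\widehat U(\xi_t)\mathbf{1}_{\{\underline\xi_t>0\}}\,\mathbb{E}_{(z,x)}\big[F\mid S\big]\Big]=\mathbb{E}^{(e),\uparrow}_x\Big[H\,\mathbb{E}_{(z,x)}\big[F\mid S\big]\Big].
\]
Since the right-hand side is precisely the expectation of $FH$ when $S$ is run under $\mathbb{P}^{(e),\uparrow}_x$ and, given $S$, $Z$ is run under its original quenched law, this yields $\mathbb{E}^{\uparrow}_{(z,x)}[F\mid S]=\mathbb{E}_{(z,x)}[F\mid S]$, and together with (i) identifies $(Z,\xi)$ under $\mathbb{P}^{\uparrow}_{(z,x)}$ as a continuous state branching process in the environment $\xi$ conditioned to stay positive.

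The one point deserving care is (ii): one must be sure that the $h$-transform reweights \emph{only} the law of the environment and leaves the time-inhomogeneous branching mechanism $(s,\theta)\mapsto e^{\xi_s}\psi_0(\theta e^{-\xi_s})$ — equivalently the cumulant semigroup $v_t(\cdot,\cdot,\xi)$ of \eqref{eq_Laplace} — untouched; this is exactly what the conditioning-on-$S$ computation is designed to make rigorous. A comparatively minor technical matter is the extension of the consistent family $\{\mathbb{P}^{\uparrow}_{(z,x)}|_{\mathcal{F}_t}\}_{t\ge 0}$ to $\mathcal{F}_\infty$, which relies on the standard regularity of the path space supporting $(Z,\xi)$.
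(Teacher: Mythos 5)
The paper states this lemma with attribution to Bansaye et al.\ \cite{bansaye2021extinction} and gives no proof of its own, so there is nothing internal to compare against; judging the argument on its own merits, your proof is correct and follows exactly the route one expects (and that the cited reference takes): use the product structure of $(\Omega,\mathcal{F},\mathbb{P})$ to reduce the $(\mathcal{F}_t)$-martingale property to the purely environmental identity \eqref{eq_fctharm} via the Markov property of $\xi$, verify consistency and normalization of the $h$-transformed family, and then separate the reweighting into its action on the environmental marginal (giving $\mathbb{P}^{(e),\uparrow}_x$ via \eqref{defPuparrowx}) and its inaction on the quenched law of $Z$ given $S$. The key observation you isolate — that $\widehat U(\xi_t)\mathbf 1_{\{\underline\xi_t>0\}}$ is $\sigma(S)$-measurable, hence the conditional law of $Z$ given $S$ is untouched by the change of measure — is precisely what justifies calling the resulting object a CBLE in the conditioned environment.

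Two small remarks worth flagging, neither a genuine gap. First, your proof (like the statement itself) implicitly requires the hypothesis under which \eqref{eq_fctharm} holds, namely that $\xi$ does not drift to $-\infty$; this is automatic in Bansaye et al.'s critical setting but is worth stating explicitly, since in the present paper the lemma is only ever invoked after the Esscher change to $\mathbb{P}^{(\gamma)}$ where $\xi$ oscillates. Second, citing \eqref{eq_fctharm} to establish integrability is harmless here because that identity already asserts the expectation equals $\widehat U(x)<\infty$, but one could alternatively argue directly from the linear growth bound \eqref{grandO} and the finiteness of $\mathbb{E}^{(e)}[\xi_t^+]$.
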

 	Furthermore,  appealing to  duality and Lemma \ref{teo_bansayemtg}, we may deduce that, under $\mathbb{P}_{(z,x)}$ with $z>0$ and $x<0$,  the process $(U(-\xi_t)\mathbf{1}_{\{\overline{\xi}_t< 0\}}, t\geq 0)$ is a martingale with respect to $(\mathcal{F}_t)_{t\geq 0}$. Hence,  the law of \textit{ continuous state branching processes in a L\'evy environment $\xi$ conditioned to stay negative} is defined as follows: for $z>0$, $x<0$ and $\Lambda \in \mathcal{F}_t$, 
 	
 	\begin{equation}\label{eq_CDBPnegativo}
 		\mathbb{P}^{\downarrow}_{(z,x)}(\Lambda): =\frac{1}{U(-x)}\mathbb{E}_{(z,x)}\big[U(-\xi_t)\mathbf{1}_{\{\overline{\xi}_t< 0\}}\mathbf{1}_\Lambda\big].
 	\end{equation}

 Recall that we are interested in the probability of survival under the weakly subcritical regime,   that is  \eqref{eq_moments} is satisfied and  the Laplace exponent of $\xi$ is such that  
 \[
 \Phi_{\xi}'(0)<0< \Phi_{\xi}'(1) \textrm{ and  there exists  } \gamma \in (0,1) \textrm{ which solves } \Phi_{\xi}'(\gamma)=0. 
 \]
In other words, the L\'evy process $\xi$  drifts to $-\infty$ a.s., under $\mathbb{P}^{(e)}$, and to $+\infty$ a.s., under $\mathbb{P}^{(e,1)}$. In the remainder of this manuscript, we will always assume that  the process $Z$ is in the weakly subcritical regime.

Our first main result requires  that  the branching mechanism $\psi$ is regularly varying at $0$, that is there exist $\beta\in (0,1]$
\begin{equation}\label{betacond}\tag{\bf{H2}}
\psi_0(\lambda)=\lambda^{1+\beta}\ell(\lambda),
\end{equation}
where $\ell$ is a slowly varying function at $0$. See Bingham et al. \cite{bingham1989regular} for a proper definition.

For simplicity on exposition, we introduce the function $\kappa^{(\lambda)}(0,\theta)$ as follows
\[
\int_0^\infty e^{-\theta y} U^{(\gamma)}(y)  \mathrm{d} y=\frac{1}{\theta\kappa^{(\gamma)}(0,\theta)}, \qquad \theta> 0.
\]
\begin{theorem}\label{prop_weakly_pareja}
	Let $x,z>0$. Assume that $Z$ is weakly subcritical and that condition \eqref{betacond} holds, hence the random variable $\mathcal{U}_t:=Z_te^{-\xi_t}$  converges in distribution to some random variable $Q$ with values in $[0,\infty)$ as $t\to \infty$, under $\mathbb{P}_{(z,x)}\big(\cdot \ | \ \underline{\xi}_t>0 \big)$. Moreover, 
	\begin{equation}\label{eq_constantb1}
\mathfrak{b}(z,x):= \lim\limits_{t\to \infty}\mathbb{P}_{(z,x)}\Big(Z_t>0 \ \big|\big. \ \underline{\xi}_t>0\Big)>0,
	\end{equation}
	where
	\begin{equation*}
		\mathfrak{b}(z,x) =  1-  \lim\limits_{\lambda \to \infty} \lim\limits_{s\to \infty}  \int_{0}^\infty \int_{0}^{1} \int_{0}^{\infty} w^{\textbf{u}} \mathbb{P}^{(\gamma),\uparrow}_{(z,x)}\big(\mathcal{U}_s \in \mathrm{d} \textbf{u}\big) \mathbb{P}^{(e,\gamma),\downarrow}_{-y}\Big(\widehat{W}_s(\lambda) \in \mathrm{d} w\Big)\mu_\gamma(\mathrm{d}  y),	\end{equation*}
	with
	\begin{equation}\label{eq:mugamma}
	\widehat{W}_{s}(\lambda):= \exp\left\{-v_s(0,\lambda,\widehat{\xi})\right\}\quad\textrm{and}\quad \mu_\gamma(\mathrm{d} y) := \gamma\kappa^{(\gamma)}(0,\gamma)e^{-\gamma y} U^{(\gamma)}(y) \mathbf{1}_{\{y>0\}} \mathrm{d} y.
	\end{equation}
\end{theorem}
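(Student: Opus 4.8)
The argument follows the scheme of Afanasyev et al.~\cite{afanasyev2012} in discrete time, adapted to the present setting as in \cite{bansaye2021extinction,CTP21}. Throughout, Grey's condition is in force, so $v_t(0,\infty,\xi):=\lim_{\lambda\uparrow\infty}v_t(0,\lambda,\xi)<\infty$ and, by \eqref{eq_Laplace} at $s=0$ together with $\{Z_t=0\}=\{\mathcal{U}_t=0\}$, we have $\mathbb{P}_{(z,x)}(Z_t=0\mid S)=\exp\{-ze^{-x}v_t(0,\infty,\xi)\}$, hence for every $\lambda\in(0,\infty]$,
\[
\mathbb{E}_{(z,x)}\bigl[e^{-\lambda\mathcal{U}_t}\,\big|\,\underline{\xi}_t>0\bigr]=\mathbb{E}^{(e)}_x\bigl[e^{-ze^{-x}v_t(0,\lambda,\xi)}\,\big|\,\underline{\xi}_t>0\bigr].
\]
The plan is: (a) to show, for each fixed $\lambda\in(0,\infty)$, that this converges as $t\to\infty$ to the triple integral in the statement with $\widehat{W}_s(\lambda)$ in place of $w$ and the window length $s$ still free; (b) to let $s\to\infty$ and identify the limit law $Q$; (c) to let $\lambda\uparrow\infty$, interchanging this with $t\to\infty$, to obtain \eqref{eq_constantb1}. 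As a preliminary reduction, by \eqref{eq_medida_Essher} every nonnegative $\mathcal{F}^{(e)}_t$-measurable $\Phi$ satisfies $\mathbb{E}^{(e)}_x[\Phi]=e^{t\Phi_\xi(\gamma)}\mathbb{E}^{(e,\gamma)}_x[e^{-\gamma(\xi_t-x)}\Phi]$, so the deterministic factor $e^{t\Phi_\xi(\gamma)}$ cancels in the conditional expectation and one may work under $\mathbb{P}^{(e,\gamma)}$ — equivalently under the joint law $\mathbb{P}^{(\gamma)}_{(z,x)}$ in which only the environment is Esscher transformed — where $\xi$ \emph{oscillates}, since $\Phi_\xi'(\gamma)=0$. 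In particular the renewal functions $U^{(\gamma)},\widehat{U}^{(\gamma)}$, the conditioned laws $\mathbb{P}^{(e,\gamma),\uparrow}_{\,\cdot}$, $\mathbb{P}^{(e,\gamma),\downarrow}_{\,\cdot}$ and (the $\gamma$-analogue of Lemma~\ref{teo_bansayemtg}) the conditioned CBLE $\mathbb{P}^{(\gamma),\uparrow}_{(z,x)}$ are available, and $(\widehat{U}^{(\gamma)}(\xi_t)\mathbf{1}_{\{\underline{\xi}_t>0\}})_{t\ge0}$ is a $\mathbb{P}^{(e,\gamma)}$-martingale.

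\emph{Two-sided decomposition.} Fix $s>0$. Peeling the first window: by \eqref{eq_Laplace} at time $s$, conditionally on the environment, $\mathbb{E}_{(z,x)}[e^{-\lambda\mathcal{U}_t}\mid S,\mathcal{F}^{(b)}_s]=\exp\{-\mathcal{U}_s v_t(s,\lambda,\xi)\}$, which extracts the population $\mathcal{U}_s$ on $[0,s]$. Peeling the last window: the substitution $\tilde v(r):=v_t(r,\lambda,\xi)e^{-\xi_{t-s}}$ turns \eqref{eq_BDE} on $[t-s,t]$ into an equation depending only on the increments of $\xi$ over $[t-s,t]$ together with the terminal datum $\lambda e^{-\xi_{t-s}}$ (which disappears for $\lambda=\infty$), so that $R_s:=v_t(t-s,\lambda,\xi)e^{-\xi_{t-s}}$ is essentially a functional of those increments; consequently the coupling through $\xi_{t-s}$ drops out of $\mathcal{U}_{t-s}v_t(t-s,\lambda,\xi)=Z_{t-s}R_s$, and the time-reversal duality \eqref{eq_lemmaduality} re-expresses the last-window contribution through $v_s(0,\lambda,\widehat{\xi})$ of the dual process, producing $\widehat{W}_s(\lambda)=\exp\{-v_s(0,\lambda,\widehat{\xi})\}$ and the exponentiation $\widehat{W}_s(\lambda)^{\mathcal{U}_{t-s}}$. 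Decomposing $\{\underline{\xi}_t>0\}=\{\underline{\xi}_{t-s}>0\}\cap\{\inf_{[t-s,t]}\xi>0\}$ and conditioning on $\mathcal{F}_{t-s}$ (under which the last-window increments are independent with the Esscher-$\gamma$ law), the numerator becomes, up to $e^{t\Phi_\xi(\gamma)}$, an expectation against ``$\xi$ stays positive'' that factorizes over $[0,s]$, $[s,t-s]$ and $[t-s,t]$ up to the junction heights; the denominator is the same expression without the branching factors.

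\emph{Limit $t\to\infty$.} The analytic heart is a conditional limit theorem for the oscillating $\xi$: since $\Phi_\xi''(\gamma)\in(0,\infty)$, Spitzer's condition holds with index $1/2$, so — by the fluctuation estimates of Section~\ref{preLevy}, following \cite{bansaye2021extinction} — the long middle window contributes only the universal polynomial factor of order $t^{-3/2}$, which cancels in the ratio, while forcing simultaneously: (i) the restriction of $(Z,\xi)$ to $[0,s]$ to de-condition to $\mathbb{P}^{(\gamma),\uparrow}_{(z,x)}$, producing $\mathbb{P}^{(\gamma),\uparrow}_{(z,x)}(\mathcal{U}_s\in\mathrm{d}\mathbf{u})$; (ii) the junction height to converge in law to $\mu_\gamma$ of \eqref{eq:mugamma}, whose density $\propto e^{-\gamma y}U^{(\gamma)}(y)$ is exactly the one dictated by the $e^{-\gamma\xi}$-tilt and the renewal structure of $\xi$ at its running supremum (and $\gamma\kappa^{(\gamma)}(0,\gamma)\int_0^\infty e^{-\gamma y}U^{(\gamma)}(y)\mathrm{d}y=1$ makes $\mu_\gamma$ a probability measure); (iii) the time-reversed last window, started from that height $y$, to become the dual environment distributed as under $\mathbb{P}^{(e,\gamma),\downarrow}_{-y}$, with — given the environment — the branching process on $[t-s,t]$ feeling only the reversed increments there and $\mathcal{U}_{t-s}$ the surviving mass per unit. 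Combining these, for every fixed $s,\lambda\in(0,\infty)$,
\[
\lim_{t\to\infty}\mathbb{E}_{(z,x)}\bigl[e^{-\lambda\mathcal{U}_t}\,\big|\,\underline{\xi}_t>0\bigr]=\int_{0}^{\infty}\!\!\int_{0}^{1}\!\!\int_{0}^{\infty} w^{\mathbf{u}}\,\mathbb{P}^{(\gamma),\uparrow}_{(z,x)}(\mathcal{U}_s\in\mathrm{d}\mathbf{u})\,\mathbb{P}^{(e,\gamma),\downarrow}_{-y}\bigl(\widehat{W}_s(\lambda)\in\mathrm{d}w\bigr)\,\mu_\gamma(\mathrm{d}y),
\]
using $w^{\mathbf{u}}=e^{-\mathbf{u}v_s(0,\lambda,\widehat{\xi})}$ and the asymptotic independence of the three windows. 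Letting $s\to\infty$ on the right — using that $\mathcal{U}_s$ converges under $\mathbb{P}^{(\gamma),\uparrow}_{(z,x)}$ and that $v_s(0,\lambda,\widehat{\xi})$ stabilizes, since $-\xi$ drifts to $+\infty$ under $\mathbb{P}^{(e,\gamma),\downarrow}_{-y}$ — shows the limit of $\mathbb{E}_{(z,x)}[e^{-\lambda\mathcal{U}_t}\mid\underline{\xi}_t>0]$ is, for every $\lambda\ge0$, the Laplace transform of a probability law $Q$ on $[0,\infty)$ (right-continuity at $0$ being part of the uniform control below); this is the asserted convergence $\mathcal{U}_t\Rightarrow Q$ under $\mathbb{P}_{(z,x)}(\cdot\mid\underline{\xi}_t>0)$.

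\emph{The constant $\mathfrak{b}(z,x)$.} Since $\mathbb{E}_{(z,x)}[e^{-\lambda\mathcal{U}_t}\mid\underline{\xi}_t>0]\downarrow\mathbb{P}_{(z,x)}(Z_t=0\mid\underline{\xi}_t>0)$ as $\lambda\uparrow\infty$, the displayed limit gives at once $\limsup_{t\to\infty}\mathbb{P}_{(z,x)}(Z_t=0\mid\underline{\xi}_t>0)\le\lim_{\lambda\to\infty}\lim_{s\to\infty}(\text{r.h.s.})$. For the matching lower bound — equivalently the strict positivity $\mathfrak{b}(z,x)>0$ — one must show $\mathbb{P}_{(z,x)}(0<\mathcal{U}_t<\varepsilon\mid\underline{\xi}_t>0)\to0$ as $\varepsilon\downarrow0$, uniformly in $t$, which permits interchanging $\lim_\lambda$ and $\lim_t$; this is precisely where the regular variation hypothesis \eqref{betacond} on $\psi_0$ near $0$ enters — controlling, through estimates on $v_t(s,\lambda,\xi)$ for small $\lambda$, how the population establishes under favorable environments — together with the lower bound on $\psi$ recalled in the Introduction for the unfavorable ones. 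Granting this, one obtains \eqref{eq_constantb1} with its explicit formula and $\mathfrak{b}(z,x)>0$. I expect the genuine difficulty to lie exactly in the step $t\to\infty$: upgrading the classical conditional limit theorem for Lévy processes conditioned to stay positive to a functional that is \emph{not} a function of $\xi_t$ alone but couples the start of the path (through $Z_s$), the junction height, the time-reversed last window and the branching process via $\widehat{W}_s(\lambda)^{\mathcal{U}_{t-s}}$ — neutralizing the between-window dependence on $\xi$ via $\mathcal{U}_{t-s}v_t(t-s,\lambda,\xi)=Z_{t-s}R_s$ and carrying the limit through this bounded functional — as well as in the uniform-in-$t$ smallness of $\{0<\mathcal{U}_t<\varepsilon\}$ needed to pin down $\mathfrak{b}(z,x)$.
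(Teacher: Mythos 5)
Your proposal follows the same broad strategy as the paper (Esscher tilt to $\mathbb{P}^{(e,\gamma)}$, three-window decomposition, a conditional limit theorem producing $\mu_\gamma$, the $\uparrow$ and $\downarrow$ laws, then $s\to\infty$ and $\lambda\to\infty$), but there is a genuine gap exactly where the technical work lives. You ``peel the first window'' via \eqref{eq_Laplace} at time $s$, producing $\exp\{-\mathcal{U}_s\,v_t(s,\lambda,\xi)\}$, and then discuss the last window through $R_s:=e^{-\xi_{t-s}}v_t(t-s,\lambda,\xi)$ and the quantity $\mathcal{U}_{t-s}v_t(t-s,\lambda,\xi)=Z_{t-s}R_s$. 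These two descriptions do not match: the first involves $v_t(s,\cdot)$ (which depends on the whole path $\xi$ on $[s,t]$, hence on the middle window), the second involves $\mathcal{U}_{t-s}=Z_{t-s}e^{-\xi_{t-s}}$ (which depends on the branching throughout $[0,t-s]$, again the middle window). The step that actually decouples the windows --- and is the crux of the proof --- is to show that on $\{\underline{\xi}_t>0\}$ one may replace $v_t(s,\lambda,\xi)$ by $v_t(t-s,\lambda,\xi)$ up to negligible error: this is Lemma~\ref{lem_cotasv}, which rests on Lemma~\ref{lem_cotapsi0}, i.e.\ on showing $t^{3/2}e^{-t\Phi_\xi(\gamma)}\int_s^{t-s}\mathbb{E}^{(e)}_x[\Psi_0(\lambda e^{-\xi_u})\mathbf{1}_{\{\underline{\xi}_t>0\}}]\,\mathrm{d}u\to 0$. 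That estimate is where the regular variation hypothesis \eqref{betacond} enters this theorem: it gives $\Psi_0(\lambda e^{-\xi_u})\asymp e^{-\beta\xi_u}\ell(\lambda e^{-\xi_u})$ with a slowly varying factor that is tame enough, after the renewal-function control $\widehat{U}^{(\gamma)}(y)\lesssim y$, to integrate. Your sketch instead attributes \eqref{betacond} to the interchange of $\lim_\lambda$ and $\lim_t$ (via a uniform-in-$t$ smallness of $\{0<\mathcal{U}_t<\varepsilon\}$) and invokes ``the lower bound on $\psi$'' for unfavourable environments. That second ingredient is not used here at all: conditioning on $\underline{\xi}_t>0$ removes unfavourable environments from the picture, and the lower bound $\ell(\lambda)>C$ only appears later (Lemma~\ref{lem_cero_weakly}) in the proof of Theorem~\ref{teo_debil}.

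Two further points. First, once the replacement $v_t(s,\cdot)\mapsto v_t(t-s,\cdot)$ is made, the remaining expression is exactly $\varphi(\mathcal{U}_s,\widetilde{W}_{t-s,t},\xi_t)$ with $\varphi(\mathbf{u},w,y)=w^{\mathbf{u}}$, and Proposition~\ref{prop_weakly_primera} --- the functional conditional limit theorem combining Hirano's estimates with time reversal --- is what delivers the triple integral; your narrative about $t^{-3/2}$ cancelling and heights converging to $\mu_\gamma$ is the correct intuition but is not an argument, and the actual proof of that proposition (sandwiching via $\liminf$/$\limsup$, Fatou, monotone class) is a nontrivial piece of work you have not supplied. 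Second, for the positivity $\mathfrak{b}(z,x)>0$ the paper does not use your uniform smallness of $\{0<\mathcal{U}_t<\varepsilon\}$; it instead checks $\Upsilon_{z,x}(\lambda)<1$ for all $\lambda>0$ by showing $\mathbb{P}^{(\gamma),\uparrow}_{(z,x)}(\mathcal{U}_\infty>0)>0$ (Lemma~\ref{lem_mtg}, using the critical-case Proposition~3.4 of \cite{bansaye2021extinction} after the Esscher tilt, since $\Phi_\xi'(\gamma)=0$ makes $\xi$ oscillate) and $\mathbb{P}^{(e,\gamma),\downarrow}_{-y}(\widehat W_\infty(\lambda)<1)=1$, the latter again relying on \eqref{betacond} through the lower bound $v_\infty(0,\lambda,\xi)\geq\lambda\exp\{-\int_0^\infty\Psi_0(\lambda e^{-\xi_u})\,\mathrm{d}u\}$ and the finiteness of $\mathbb{E}^{(e,\gamma),\uparrow}_y[\int_0^\infty\Psi_0(\lambda e^{-\xi_u})\,\mathrm{d}u]$.
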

It is important to note that in general, it seems difficult to compute explicitly the constant $\mathfrak{b}(z,x)$ except for the stable case. In the stable case, we observe that the constant $\mathfrak{b}(z,x)$ is given  in terms of two independent exponential functionals of conditioned L\'evy processes. Denote by $\texttt{I}_{s,t}(\beta \xi)$  the exponential functional of  the L\'evy process $\beta \xi$, i.e., 
	\begin{equation}\label{eq_expfuncLevy}
		\texttt{I}_{s,t}(\beta \xi):=\int_{s}^{t}e^{-\beta \xi_u} \mathrm{d} u, \quad \quad  0\leq s\leq t. 
	\end{equation}
Hence, when $\psi_0(\lambda)=C\lambda^{1+\beta}$ with $C>0$ and $\beta\in(0,1)$, we have 
\begin{equation*}
		\mathfrak{b} (z,x) = \gamma\kappa^{(\gamma)}(0,\gamma) \int_{0}^{\infty} e^{-\gamma y} U^{(\gamma)} (y) G_{z,x}(y){\rm d} y,
\end{equation*}
where 
\begin{equation}\label{functG}
\begin{split}
G_{z,x}(y)&:=   \int_{0}^{\infty} \int_{0}^{\infty} \left(1-e^{-ze^{-x}(\beta C w +\beta C u)^{-1/\beta}}\right)\\
&\hspace{4cm}\mathbb{P}^{(\gamma),\uparrow}_{(z,x)}\big(\texttt{I}_{0,\infty}(\beta \xi) \in \mathrm{d} w\big) \mathbb{P}^{(e,\gamma),\downarrow}_{-y}\Big(\texttt{I}_{0,\infty}(\beta \widehat{\xi}) \in \mathrm{d} u\Big).	
 \end{split}
 \end{equation}	
We refer to subsection \ref{sec:stable} for further details about the computation of this constant.

Under the assumption that  $Z$ is  weakly subcritical, the running infimum of the auxiliary process $\xi$ satisfies the following asymptotic behaviour:  for $x>0$, 
\begin{equation}\label{eq_weakly_cota_hirano}
	\mathbb{P}^{(e)}_x\left(\underline{\xi}_t> 0\right) \sim  \frac{A_\gamma}{\gamma\kappa^{(\gamma)}(0,\gamma)} e^{\gamma x} \widehat{U}^{(\gamma)}(x) t^{-3/2}e^{\Phi_{\xi}(\gamma)t}, \quad \text{as} \qquad t \to \infty,
\end{equation}
where 
\begin{equation}\label{eq_weakly_constante}
	A_\gamma:= \frac{1}{\sqrt{2 \pi \Phi_{\xi}''(\gamma)}} \exp\left\{\int_{0}^{\infty}(e^{-t}-1)t^{-1}e^{-t\Phi_{\xi}(\gamma)}\mathbb{P}^{(e)}(\xi_t = 0) \mathrm{d} t\right\},
\end{equation}
see for instance  Lemma A in \cite{hirano2001levy} (see also Proposition 4.1 in \cite{li2018asymptotic}). Such asymptotic turns out to be the leading term in the asymptotic behaviour of the probability of survival as it is stated below.

\begin{theorem}[Weakly subcritical regime]\label{teo_debil}
	Let $z>0$. Assume that $Z$ is weakly subcritical and that the slowly varying function  in   \eqref{betacond}  satisfies that there exists a constant $C>0$, such that $\ell(\lambda)>C$.  
	Then  there exists $0<\mathfrak{B}(z)<\infty$ such that
	\[
	\begin{split}
		\lim\limits_{t\to \infty}t^{-3/2}e^{-\Phi_{\xi}(\gamma)t} \mathbb{P}_{z}(Z_t>0) = \mathfrak{B}(z),
	\end{split}
	\]
	with
	$$ \mathfrak{B}(z):=\frac{A_\gamma }{ \gamma\kappa^{(\gamma)}(0,\gamma)}\lim_{x\to \infty} \mathfrak{b}(z,x)e^{\gamma x}\widehat{U}^{(\gamma)}(x), 
	$$
	where $\mathfrak{b}(z,x)$ and $A_{\gamma}$ are the constants defined in \eqref{eq_constantb1} and \eqref{eq_weakly_constante}, respectively.
\end{theorem}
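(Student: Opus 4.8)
The plan is to decompose the survival probability $\mathbb{P}_z(Z_t>0)$ according to the sign of the running infimum $\underline{\xi}_t$ of the auxiliary L\'evy environment, using the Esscher change of measure $\mathbb{P}^{(e,\gamma)}$ that turns $\xi$ into an oscillating L\'evy process, exactly as in the strategy of Bansaye et al.\ \cite{bansaye2021extinction} for the critical case. First I would write, conditioning on the environment and using \eqref{martingquenched}, that $Z_t e^{-\xi_t}$ is a quenched martingale, so by Markov's inequality and the Laplace transform identity \eqref{eq_Laplace} the quenched survival probability is controlled from above and below by quantities of the form $1-\exp\{-z e^{-\xi_0} v_t(0,\lambda,\xi)\}$ after a letting $\lambda\to\infty$; the regular variation hypothesis \eqref{betacond} together with the lower bound $\ell(\lambda)>C$ gives the two-sided control on $v_t$ needed to make this precise. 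Then I would apply the Esscher transform at parameter $\gamma$, so that $\mathbb{P}_z(Z_t>0)=e^{\Phi_\xi(\gamma)t}\,\mathbb{E}^{(\gamma)}_z\big[e^{-\gamma\xi_t}\,\mathbb{P}_z(Z_t>0\mid S)\big]$, and split the expectation over $\{\underline{\xi}_t>0\}$ and its complement.

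The core of the argument is to show that the contribution of $\{\underline{\xi}_t<0\}$ is negligible at the scale $t^{3/2}e^{-\Phi_\xi(\gamma)t}$, and that on $\{\underline{\xi}_t>0\}$ one recovers precisely the constant $\mathfrak{b}(z,x)$ from Theorem \ref{prop_weakly_pareja} times the renewal asymptotics \eqref{eq_weakly_cota_hirano}. For the first point I would combine the estimate on $v_t$ (which for environments with $\underline{\xi}_t<0$ stays bounded) with fluctuation-theoretic bounds on $\mathbb{P}^{(e,\gamma)}(\underline{\xi}_t<0,\ \ldots)$ obtained from the Wiener--Hopf factorization and the $t^{-3/2}$ behaviour; since under $\mathbb{P}^{(e,\gamma)}$ the process oscillates and satisfies Spitzer's condition with index $1/2$, the relevant probabilities decay at the right rate and the factor $e^{-\gamma\xi_t}$ does not compensate. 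For the second point I would use the identity \eqref{eq_weakly_cota_hirano} to replace $\mathbb{P}^{(e)}_x(\underline{\xi}_t>0)$ by its asymptotic, then condition on $\{\underline{\xi}_t>0\}$ and invoke Theorem \ref{prop_weakly_pareja}: under $\mathbb{P}_{(z,x)}(\cdot\mid\underline{\xi}_t>0)$ the random variable $\mathcal{U}_t=Z_te^{-\xi_t}$ converges in distribution to $Q$, hence $\mathbb{P}_{(z,x)}(Z_t>0\mid\underline{\xi}_t>0)\to\mathfrak{b}(z,x)$. Gathering terms yields
\[
t^{-3/2}e^{-\Phi_\xi(\gamma)t}\,\mathbb{P}_z(Z_t>0)\ \longrightarrow\ \frac{A_\gamma}{\gamma\kappa^{(\gamma)}(0,\gamma)}\,\lim_{x\to\infty}\mathfrak{b}(z,x)\,e^{\gamma x}\widehat{U}^{(\gamma)}(x),
\]
provided the $x\to\infty$ limit exists and is finite; here the dependence on the starting point $x$ of the environment must be removed because $Z$ starts from a deterministic environment value (which we may take to be $0$), so a separate argument — splitting the path of $\xi$ at a large level $x$ via the Markov property and the harmonicity relation \eqref{eq_fctharm} for $\widehat{U}^{(\gamma)}$ — is required to pass from fixed $x>0$ to the true process.

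I would then verify the two remaining analytic claims: that $\mathfrak{B}(z)>0$ and that $\mathfrak{B}(z)<\infty$. Finiteness follows from $\mathfrak{b}(z,x)\le 1$ together with the known asymptotic $e^{\gamma x}\widehat{U}^{(\gamma)}(x)=O(1)$ ... more precisely, one needs $\mathfrak{b}(z,x)e^{\gamma x}\widehat{U}^{(\gamma)}(x)$ to stay bounded, which comes from the explicit expression for $\mathfrak{b}(z,x)$ in Theorem \ref{prop_weakly_pareja} and a dominated-convergence argument in the integral over $\mu_\gamma$; strict positivity follows because, by Grey's condition (ensuring $0$ is accessible and the branching part does not force instantaneous extinction) and \eqref{eq_constantb1}, $\mathfrak{b}(z,x)>0$ for every $x$, and the limit cannot collapse to zero since $e^{\gamma x}\widehat{U}^{(\gamma)}(x)$ is bounded below away from $0$.

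The main obstacle I anticipate is twofold: first, making rigorous the interchange of the limits $t\to\infty$, $s\to\infty$ and $\lambda\to\infty$ that define $\mathfrak{b}(z,x)$ while simultaneously multiplying by the diverging factor $e^{\gamma x}\widehat{U}^{(\gamma)}(x)$ and sending $x\to\infty$ — this requires uniform-in-$x$ control of the conditioned processes $\mathbb{P}^{(\gamma),\uparrow}_{(z,x)}$ and a tightness estimate for $\mathcal{U}_s$ that does not degenerate as $x$ grows; and second, the negligibility of the $\{\underline{\xi}_t<0\}$ part, which in the weakly subcritical regime is more delicate than in the critical case because the Esscher-transformed environment, although oscillating, must be shown to stay positive for the dominant portion of time and the contribution of excursions below $0$ must be bounded using the regular variation of $\psi_0$ near $0$ to control how fast $Z$ is pushed towards extinction along unfavourable stretches of the environment. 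I expect these estimates to occupy the bulk of the proof, with the rest being an assembly of Theorem \ref{prop_weakly_pareja}, the renewal asymptotics \eqref{eq_weakly_cota_hirano}, and standard fluctuation theory.
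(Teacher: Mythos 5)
Your overall plan — split the survival probability according to whether the running infimum of $\xi$ stays above a large negative threshold, apply Theorem~\ref{prop_weakly_pareja} together with the renewal asymptotics \eqref{eq_weakly_cota_hirano} on the favourable event, and show the unfavourable contribution is negligible by bounding $v_t(0,\infty,\xi)$ through the exponential functional $\texttt{I}_{0,t}(\beta\xi)$ using $\ell>C$ — is exactly the paper's strategy, which packages these two halves as Lemma~\ref{lem_weakly_2} and Lemma~\ref{lem_cero_weakly}. However, there is a genuine error in your verification that $\mathfrak{B}(z)<\infty$: you write $e^{\gamma x}\widehat{U}^{(\gamma)}(x)=O(1)$, but since $\widehat{U}^{(\gamma)}(x)\sim x/\widehat{\mathbb{E}}^{(e,\gamma)}[H_1]$ as $x\to\infty$, this product diverges, so $\mathfrak{b}(z,x)\le 1$ gives nothing, and your backup suggestion of a dominated-convergence argument over $\mu_\gamma$ in the explicit formula for $\mathfrak{b}(z,x)$ would require a uniform-in-$x$ integrability bound for which you give no argument.

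The paper does not verify finiteness (or even existence of the limit $\lim_{x\to\infty}\mathfrak{b}(z,x)e^{\gamma x}\widehat{U}^{(\gamma)}(x)$) by a separate estimate: it falls out of the sandwich. Lemma~\ref{lem_cero_weakly} gives, for every $\epsilon>0$, a level $y'=y_\epsilon(z)$ with $\mathbb{P}_{(z,x)}(Z_t>0,\underline{\xi}_{t-\delta}\le -y')\le\epsilon\,\mathbb{P}_{(z,x)}(Z_t>0,\underline{\xi}_{t-\delta}>-y')$ for $t$ large; combined with $\{Z_t>0\}\subset\{Z_{t-\delta}>0\}$ and translation invariance of the increments of $\xi$ (and hence of $Z$), one gets
\[
(1-\epsilon)\,\mathbb{P}_{(z,y')}\big(Z_t>0,\underline{\xi}_t>0\big)\le\mathbb{P}_z(Z_t>0)\le(1+\epsilon)\,\mathbb{P}_{(z,y')}\big(Z_{t-\delta}>0,\underline{\xi}_{t-\delta}>0\big).
\]
Applying Lemma~\ref{lem_weakly_2} to both sides and letting $\epsilon\to 0$ (which forces $y'\to\infty$) and then $\delta\to 0$ squeezes $\liminf$ and $\limsup$ of $t^{3/2}e^{-\Phi_\xi(\gamma)t}\mathbb{P}_z(Z_t>0)$ onto a common value; this \emph{simultaneously} proves that the $x\to\infty$ limit exists and lies in $(0,\infty)$. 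Your proposed detour through the Markov property and the harmonicity relation \eqref{eq_fctharm} to pass from fixed $x$ to the true process is not what the paper uses — a translation of the starting level suffices — and it does not appear to close the finiteness gap.
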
 	

It is important to note that in the stable case,  the constant $\mathfrak{B}(z)$  coincides with the constant that appears  in Theorem 5.1 in Li and Xu \cite{li2018asymptotic},  that is
\begin{equation*}
	\begin{split}
		\mathfrak{B}(z)=    A_\gamma \lim\limits_{x\to \infty} e^{\gamma x} \widehat{U}^{(\gamma)}(x)\int_{0}^{\infty} e^{-\gamma y} U^{(\gamma)}(y)G_{z,x}(y){\rm d} y,
	\end{split}
\end{equation*}
where $G_{z,x}$ is defined in \eqref{functG}.

{\bf Some comments about our results:}
We first remark that our  assumption \eqref{betacond} clearly implies 		
		\begin{equation}\label{eq_xlog2x}
			\int^\infty x \log^2 x \mu(\mathrm{d} x) <\infty.
		\end{equation}
		The latter condition was used before in Proposition 3.4 in \cite{bansaye2021extinction} to control the effect of a favourable environment on the event of survival. Unlike the critical case, in the weakly subcritical regime the slightly stronger condition \eqref{betacond} is required to guarantee the convergence in Theorem \ref{prop_weakly_pareja}, which allows us to have a good control of the event of survival given  favourable environments. A crucial ingredient in Theorem \ref{prop_weakly_pareja} is an extension of a sort of functional limit theorem for  conditioned  L\'evy and CBLE processes (see Proposition \ref{prop_weakly_primera} below). More precisely, we would require the asymptotically independence of the processes $((Z_u, \xi_u), 0\leq u \leq r \ | \  \underline{\xi}_t>0)$ and $(\xi_{(t-u)^{-}}, 0 \leq u \leq \delta t  \ | \  \underline{\xi}_t>0)$ as $t $ goes to $\infty$, for every $r, t\geq 0$ and $\delta\in (0,1)$. We claim that this result must be true in full generality (in particular Theorem \ref{prop_weakly_pareja} under \eqref{eq_xlog2x}) since it holds for  random walks (see Theorem 2.7 in \cite{afanasyev2012}) but it seems  not so easy to deduce. Meanwhile in the discrete setting the result follows directly from duality, in the L\'evy case the convergence will depend on a much deeper analysis on the asymptotic behaviour for bridges of L\'evy processes and their conditioned version.   It seems that  a better understanding of  conditioned L\'evy bridges is required.

	On the other hand, the condition that the slowly varying function $\ell$ is bounded from below is required to control the absorption event under unfavourable environments (see Lemma \ref{lem_cero_weakly}) and to guarantee a.s. absorption.  Indeed,  under Grey's condition 
	\begin{equation}\label{GreysCond}
		\int^{\infty}\frac{1}{\psi_0(\lambda)} \mathrm{d} \lambda < \infty,
	\end{equation} 
	 and equation \eqref{eq_Laplace}, we deduce  that for $z,x>0$
\begin{equation}\label{cotasup} 
\mathbb{P}_{(z,x)}\Big(Z_t>0,\  \underline{\xi}_t\le -y\Big)=\mathbb{E}^{(e)}\left[\left(1-e^{-z v_t(0,\infty, \xi)}\right)\mathbf{1}_{\{\underline{\xi}_t\le -y-x\}}\right],  \quad \textrm{ for } \quad y\ge 0, 
\end{equation}
where $v_t(0,\infty, \xi)$ is $\mathbb{P}^{(e)}$-a.s. finite for all $t\ge 0$, (see Theorem 4.1 and Corollary 4.4 in \cite{he2018continuous}) but perhaps equals 0. We note that \eqref{eq_xlog2x} (and implicitly \eqref{betacond})  guarantees that $v_t(0,\infty, \xi)>0$,  $\mathbb{P}^{(e)}$-a.s. for all $t>0$ (see for instance Proposition 3 in \cite{palau2018branching}). Since the functional $v_t(0,\infty, \xi)$ depends strongly on the environment, it seems difficult to estimate the right-hand side of \eqref{cotasup}. Actually, it seems not so easy to  obtain a sharp control of \eqref{cotasup}. Condition \eqref{betacond}  implies that Grey's condition 
 is fulfilled  and the assumption  that $\ell$ is bounded from below allow us  to upper bound \eqref{cotasup} in terms of the exponential functional of $\xi$.

Finally, we point out that in the discrete setting such  probability can be estimated directly in terms of the infimum of the environment since the event of survival is equal to the event that the current population is bigger or equal to one, something that cannot be performed in our setting.

The remainder of this paper is devoted to the proof of the main results.

\section{Proofs}\label{sec_absweakly}
This section is devoted to the proofs of our main results and the computation of the constant  $\mathfrak{b} (z,x) $ in the stable case. We start with some preliminaries on L\'evy processes.
 	\subsection{Preliminaries on L\'evy processes}\label{preLevy}

	Recall that  $\mathbb{P}^{(e)}_x$  denotes the law of the L\'evy process $\xi$ 
 	starting from $x\in \mathbb{R}$ and when $x=0$, we use the notation $\mathbb{P}^{(e)}$ for $\mathbb{P}^{(e)}_0$. We also recall that  $\widehat{\xi}=-\xi$ denotes the dual process and denote by  $\widehat{\mathbb{P}}_x^{(e)}$ for its law starting at  $x\in \mathbb{R}$.

 	In what follows, we require the notion of the 
 	reflected processes $\xi-\underline{\xi}$ and  $\overline{\xi}-\xi$  which are Markov processes with respect to the  filtration $(\mathcal{F}^{(e)}_t)_{t\geq 0}$ and whose semigroups
 	satisfy the Feller property (see for instance Proposition VI.1 in the monograph of Bertoin \cite{bertoin1996levy}).  We 
 	denote by $L=(L_t, t \geq 0 )$ and $\widehat{L}=(\widehat{L}_t, t \geq 0 )$  for  the local times of $\overline{\xi}-\xi$ and $\xi-\underline{\xi}$ at $0$, respectively,  in the sense of Chapter IV in \cite{bertoin1996levy}. If $0$ is regular for $(-\infty,0)$ or regular downwards, i.e.
 	\[
 	\mathbb{P}^{(e)}(\tau^-_0=0)=1,
 	\] 
 	where $\tau^{-}_0=\inf\{s> 0:  \xi_s\le 0\}$, then $0$ is regular for the reflected process $\xi-\underline{\xi}$ and then, up to a multiplicative constant, $\widehat{L}$ is the unique additive functional of the reflected process whose set of increasing points is $\{t:\xi_t=\underline{\xi}_t\}$. If $0$ is not regular downwards then the set $\{t:\xi_t=\underline{\xi}_t\}$ is discrete and we define the local time $\widehat{L}$ as the counting process of this set. The same properties holds for $L$ by duality.
 	
 	Let us denote by $L^{-1}$ and $\widehat{L}^{-1}$ the right continuous inverse of  $L$ and $\widehat{L}$, respectively.  The range of the inverse local times  $L^{-1}$ and $\widehat{L}^{-1}$, correspond to the sets of real times at which new maxima and new minima occur, respectively. Next, we introduce the so called increasing ladder height process by
 	\begin{equation}\label{defwidehatH}
 		H_t=\overline{\xi}_{L_t^{-1}}, \qquad t\ge 0.
 	\end{equation}
 	The pair $(L^{-1}, H)$ is a bivariate subordinator, as is the case of  the pair $(\widehat{L}^{-1}, \widehat{H})$ with
 	\[
 	\widehat{H}_t=-\underline{\xi}_{\widehat{L}_t^{-1}}, \qquad t\ge 0.
 	\]
 	The range of the process $H$ (resp. $\widehat{H}$) corresponds to the set of new maxima (resp. new minima). Both pairs are known as descending and ascending ladder processes, respectively.   	
 	
 	We also recall that   $U^{(\lambda)}$ and $\widehat{U}^{(\lambda)}$ denote the renewal functions under $\mathbb{P}^{(e,\lambda)}$. Such functions are defined as follows:   for all $x>0$, 
 	\begin{equation}\label{eq_Utheta}
 		U^{(\lambda)}(x) := \mathbb{E}^{(e,\lambda)}\left[\int_{[0,\infty)} \mathbf{1}_{\left\{\overline{\xi}_t\leq x\right\}} \mathrm{d} L_t\right] \quad 
 		\textrm{and}\quad
 		\widehat{U}^{(\lambda)}(x) := \mathbb{E}^{(e,\lambda)}\left[\int_{[0,\infty)} \mathbf{1}_{\left\{\underline{\xi}_t\geq -x\right\}} \mathrm{d} \widehat{L}_t\right].
 	\end{equation}
 	The renewal functions $U^{(\lambda)}$ and $\widehat{U}^{(\lambda)}$ are  finite, subadditive, continuous and increasing. Moreover, they are identically 0 on $(-\infty, 0]$,  strictly positive on $(0,\infty)$  and satisfy 
 	\begin{equation}
 		\label{grandO}
 		U^{(\lambda)}(x)\leq C_1 x \qquad\textrm{and}\qquad \widehat{U}^{(\lambda)}(x)\leq C_2 x \quad \text{  for any } \quad x\geq 0,
 	\end{equation}
 	where $C_1, C_2$ are finite constants  (see for instance  Lemma 6.4 and Section 8.2 in the monograph of Doney 
 	\cite{doney2007fluctuation}). Moreover $U^{(\lambda)}(0)=0$ if $0$ is regular upwards  and $U^{(\lambda)}(0)=1$ otherwise, similalry $\widehat{U}^{(\lambda)}(0)=0$ if $0$ is regular upwards  and $\widehat{U}^{(\lambda)}(0)=1$ otherwise.
 
 	Furthermore, it is important to note that by a simple change of variables, we can rewrite the renewal functions $U^{(\lambda)}$ and $\widehat{U}^{(\lambda)}$ in terms of the ascending and descending ladder height processes. Indeed,  the measures induced by $U^{(\lambda)}$ and  $\widehat{U}^{(\lambda)}$ can be rewritten as follows,
 	\begin{equation*}
 	U^{(\lambda)}( x)=\mathbb{E}^{(e,\lambda)}\left[\int_{0}^\infty\mathbf{1}_{\{{H}_t \le x\}}\mathrm{d} t\right]\qquad \textrm{and}\qquad \widehat{U}^{(\lambda)}(x)=\mathbb{E}^{(e,\lambda)}\left[\int_{0}^\infty\mathbf{1}_{\{\widehat{H}_t \le x\}}\mathrm{d} t\right].
	 \end{equation*}
 	Roughly speaking, the renewal function $U^{(\lambda)}(x)$ (resp.  $\widehat{U}^{(\lambda)}(x)$)  ``measures'' the amount of time that the ascending (resp. descending) ladder height process spends on the interval $[0,x]$ and in particular induces a measure on $[0,\infty)$ which is known as the renewal measure.  The latter implies
 \begin{equation}\label{bivLap}
 	\int_{[0,\infty)} e^{-\theta x} U^{(\lambda)}( x)\mathrm{d}x = \frac{1}{\theta\kappa^{(\lambda)}(0,\theta)},  \qquad  \theta>0,
\end{equation}
where $\kappa^{(\lambda)}(\cdot,\cdot)$ is the bivariate Laplace exponent of the ascending ladder process $(L^{-1}, H)$, under $\mathbb{P}^{(e,\lambda)}$ (see for instance \cite{bertoin1996levy, doney2007fluctuation, {kyprianou2014fluctuations}}).

\subsection{Proof of Theorem \ref{prop_weakly_pareja}}
Our arguments follows a similar strategy as  in Afanasyev et al.  \cite{afanasyev2012} where the discrete setting is considered. Although the matter of considering continuous time leads to significant changes such as that $0$ might be polar.  Our first proposition is the continuous analogue of Proposition 2.5 in  \cite{afanasyev2012} and  in some sense it is  a generalisation of Theorem 2 part (a) in Hirano \cite{hirano2001levy} (see also Proposition 4.2 in \cite{li2018asymptotic}). In particular, the result tell us that, for every $r, t\geq 0$ and $s\leq t$, the conditional processes $((Z_u, \xi_u), 0\leq u \leq r \ | \  \underline{\xi}_t>0)$ and $(\xi_{(t-u)^{-}}, 0 \leq u \leq s  \ | \  \underline{\xi}_t>0)$ are asymptotically independent as $t\to \infty$.  
	
Before we state our first result in this subsection, we recall that   $\mathbb{D}([0,t])$ denotes the space of c\`adl\`ag real-valued functions on $[0,t]$.

\begin{proposition}\label{prop_weakly_primera} 
	Let  $f$ and $g$ be  continuous and bounded functionals on $\mathbb{D}([0, t])$.  We also  set \ $\mathcal{U}_r := g((Z_u, \xi_u), 0\leq u \leq  r)$, and for $s\le t$
	\[
	\widehat{W}_s := f(-\xi_u, 0\leq u \leq  s), \qquad \textrm{and}\qquad\widetilde{W}_{t-s,t} := f(\xi_{(t-u)^-}, 0\leq u \leq  s).
	\]
	Then for any bounded continuous function $\varphi:\mathbb{R}^3\to \mathbb{R}$, we have
	\[
	\begin{split}
		\lim_{t\to\infty}&\frac{\mathbb{E}^{(\gamma)}_{(z,x)}\Big[\varphi(\mathcal{U}_{r},\widetilde{W}_{t-s, t},\xi_t)e^{-\gamma \xi_t}\mathbf{1}_{\{\underline{\xi}_{t} > 0\}}\Big]}{\mathbb{E}^{(e,\gamma)}_x\Big[e^{-\gamma \xi_t}\mathbf{1}_{\{\underline{\xi}_{t} > 0\}}\Big]}\\
		&\hspace{4cm}  =\iiint \varphi(u,v,y) \mathbb{P}^{(\gamma),\uparrow}_{(z,x)}\big(\mathcal{U}_r \in \mathrm{d} u\big)\mathbb{P}^{(e,\gamma),\downarrow}_{-y}\Big(\widehat{W}_s \in \mathrm{d} v\Big)\mu_\gamma(\mathrm{d} y),
	\end{split}
	\]
	with $$\mu_\gamma(\mathrm{d} y) := \gamma\kappa^{(\gamma)}(0,\gamma)e^{-\gamma y} U^{(\gamma)}(y) \mathbf{1}_{\{y>0\}} \mathrm{d} y.$$

\end{proposition}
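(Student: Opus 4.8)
The plan is to prove Proposition~\ref{prop_weakly_primera} by combining the duality identity \eqref{eq_lemmaduality} for $\xi$ with the asymptotic behaviour of the two-sided probability \eqref{eq_weakly_cota_hirano} and the Markov property applied at an intermediate time. The strategy mimics the discrete argument of Afanasyev et al.\ \cite{afanasyev2012}, but the continuous-time and the polarity of $0$ force us to work with the conditioned $h$-transforms from Lemma~\ref{teo_bansayemtg} and \eqref{defdownarrowx} rather than with elementary conditioning.

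First I would split the time interval $[0,t]$ into three blocks: an initial block $[0,r]$, where the CBLE pair $(Z,\xi)$ is observed; a terminal block $[t-s,t]$, where the time-reversed environment $\xi_{(t-u)^-}$ is observed; and a long middle block $[r,t-s]$, which carries no test function and over which the event $\{\underline\xi_t>0\}$ forces the process to stay positive. Using the Markov property of $(\xi_u-\underline\xi_u)$ (equivalently, of $(Z,\xi)$ given the environment) at time $r$ and then the duality relation \eqref{eq_lemmaduality} applied to the final piece, the numerator
$\mathbb{E}^{(\gamma)}_{(z,x)}\big[\varphi(\mathcal{U}_r,\widetilde W_{t-s,t},\xi_t)e^{-\gamma\xi_t}\mathbf 1_{\{\underline\xi_t>0\}}\big]$
should be rewritten, after conditioning on $\xi_r$ and on the reversed path over $[t-s,t]$, as an integral against the transition ``kernel'' of $\xi$ killed on entering $(-\infty,0)$ over the long middle stretch. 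One then invokes the renewal-theoretic asymptotics: the denominator behaves, by \eqref{eq_weakly_cota_hirano}, like $c\,e^{\gamma x}\widehat U^{(\gamma)}(x)\,t^{-3/2}e^{\Phi_\xi(\gamma)t}$, and the middle block, under the Esscher measure $\mathbb{P}^{(e,\gamma)}$, contributes a matching $t^{-3/2}e^{\Phi_\xi(\gamma)t}$ factor whose local-limit constant is exactly $A_\gamma/(\gamma\kappa^{(\gamma)}(0,\gamma))$. In the ratio the transcendental constants cancel, and the boundary terms at the two ends converge: the left end produces the $h$-transform $\widehat U^{(\gamma)}(\xi_r)\mathbf 1_{\{\underline\xi_r>0\}}/\widehat U^{(\gamma)}(x)$, i.e.\ the law $\mathbb{P}^{(\gamma),\uparrow}_{(z,x)}$ of $\mathcal{U}_r$; the right end, after time reversal, produces the $h$-transform $U^{(\gamma)}(-\widehat\xi_s)\mathbf 1_{\{\overline{\widehat\xi}_s<0\}}/U^{(\gamma)}(-y)$, i.e.\ the law $\mathbb{P}^{(e,\gamma),\downarrow}_{-y}$ of $\widehat W_s$; and the ``entrance point'' $\xi_t=y$ of the middle block seen from the right end gets weighted by the renewal density $\gamma\kappa^{(\gamma)}(0,\gamma)e^{-\gamma y}U^{(\gamma)}(y)\mathbf 1_{\{y>0\}}\mathrm d y=\mu_\gamma(\mathrm d y)$, which is precisely $\theta$-inverted form of \eqref{bivLap} at $\theta=\gamma$. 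Boundedness and continuity of $f,g,\varphi$, together with the uniform bounds \eqref{grandO} on $U^{(\gamma)},\widehat U^{(\gamma)}$, justify passing to the limit under the integral sign.

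The technical heart, and the step I expect to be the main obstacle, is the decorrelation of the two ends as $t\to\infty$: one must show that conditionally on $\{\underline\xi_t>0\}$ the initial segment over $[0,r]$ and the reversed terminal segment over $[t-s,t]$ become asymptotically independent, with the prescribed limiting laws. Concretely this requires a two-sided local limit theorem for the bridge of $\xi$ conditioned to stay positive — controlling $\mathbb{P}^{(e,\gamma)}_a(\xi_{t}\in\mathrm d b,\ \underline\xi_t>0)$ uniformly for $a$ ranging over the values reachable after $[0,r]$ and $b$ over those feeding into $[t-s,t]$ — so that the middle factor factorizes as (left renewal function) $\times$ (right renewal function) $\times$ (universal $t^{-3/2}e^{\Phi_\xi(\gamma)t}$ asymptotic). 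I would obtain this by combining the Wiener--Hopf factorization of $\xi$ under the Esscher transform $\mathbb{P}^{(e,\gamma)}$ (under which $\xi$ oscillates, since $\Phi_\xi'(\gamma)=0$) with the known asymptotics for the renewal functions and with the Hirano/Li--Xu estimate \eqref{eq_weakly_cota_hirano}, exactly along the lines by which Bansaye et al.\ \cite{bansaye2021extinction} treated the critical case; the only genuinely new point is to carry the branching coordinate $Z$ along in the left block, which is harmless because, given the environment, $(Z_u,\ 0\le u\le r)$ is $\mathcal F^{(e)}_r\vee\mathcal F^{(b)}_r$-measurable and the environment's left segment already appears through $\xi_r$ and $\underline\xi_r$. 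Finally I would remark that the stated convergence in law (the final assertion that $\mathcal U_t$ converges under $\mathbb{P}_{(z,x)}(\cdot\mid\underline\xi_t>0)$) follows by taking $\varphi$ depending only on its first argument and $r=t$ with $s$ fixed, then letting $s\to 0$.
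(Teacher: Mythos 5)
Your high-level plan (split $[0,t]$ into three blocks, use the Markov property and duality, match the $t^{-3/2}e^{\Phi_\xi(\gamma)t}$ asymptotics, and let the endpoints carry the two $h$-transforms) is broadly aligned with the paper's strategy, but the mechanism you propose for the central step is not the one the paper uses, and is in fact flagged by the authors as unavailable at this level of generality. You identify as ``the technical heart'' a two-sided local limit theorem for the L\'evy bridge conditioned to stay positive --- a uniform control of $\mathbb{P}^{(e,\gamma)}_a\big(\xi_t\in\mathrm{d} b,\ \underline\xi_t>0\big)$ so that the middle block factorizes and the two ends decorrelate. The paper never establishes such a bridge estimate, and its own commentary explicitly says that the asymptotic independence you want ``seems not so easy to deduce'' in the L\'evy case and would ``depend on a much deeper analysis on the asymptotic behaviour for bridges of L\'evy processes and their conditioned version.'' (That remark concerns the extension to a terminal window of length $\delta t$, but the same obstruction applies to the ingredient you invoke even at fixed $s$: you would need a genuinely two-sided, uniform local control which is not in the paper and which you do not derive.)

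What the paper actually does is more elementary and deliberately one-sided. After a monotone class reduction to product test functions $\varphi(u,v,y)=\varphi_1(u)\varphi_2(v)\varphi_3(y)$, it applies the Markov property to write the numerator as $\mathbb{E}^{(\gamma)}_{(z,x)}\big[\varphi_1(\mathcal{U}_r)\,\Phi_{t-r}(\xi_r)\,\mathbf{1}_{\{\underline\xi_r>0\}}\big]$, and it invokes Lemma~1 of Hirano, a \emph{ratio} limit theorem of the form
\[
\lim_{t\to\infty}\frac{\mathbb{E}^{(e,\gamma)}_y\big[h(\xi_{t-v})e^{-\gamma\xi_{t-v}}\mathbf{1}_{\{\underline\xi_{t-v}>0\}}\big]}{\mathbb{E}^{(e,\gamma)}_x\big[e^{-\gamma\xi_t}\mathbf{1}_{\{\underline\xi_t>0\}}\big]}
=\frac{\widehat U^{(\gamma)}(y)}{\widehat U^{(\gamma)}(x)}\int_0^\infty h(z)\,\mu_\gamma(\mathrm{d} z)
\]
for bounded $\mu_\gamma$-a.s.\ continuous $h$; this is extended to unbounded positive $h$ by truncation plus Fatou, which yields only a $\liminf$ inequality. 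Applying that inequality with $h=\Phi_s(\cdot)e^{\gamma\cdot}$, another use of Fatou, and the duality identity to recognise $\int\Phi_s(z)e^{\gamma z}\mu_\gamma(\mathrm{d} z)$ as the $\mathbb{P}^{(e,\gamma),\downarrow}_{\mu_\gamma}$-expectation of $\varphi_2(\widehat W_s)\varphi_3(\xi_0)$, gives the lower bound $\liminf\geq\text{RHS}$ for all $\varphi_1,\varphi_2\in[0,1]$. The matching $\limsup\leq\text{RHS}$ is then obtained by the standard complementation trick: first replace $\varphi_1$ by $1-\varphi_1$ (with $\varphi_2\equiv 1$) to pin down the $\mathcal{U}_r$-marginal, and then replace $\varphi_2$ by $1-\varphi_2$. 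Your proposal does not contain this device, and without it there is no way to convert the one-sided Hirano estimate into the stated convergence; this is the genuine gap. A secondary issue: your closing remark that the convergence in law in Theorem~\ref{prop_weakly_pareja} would follow by ``taking $r=t$ and letting $s\to 0$'' is not how the paper proceeds; the proposition is applied with $r=s$ and then $s\to\infty$, and the replacement of $v_t(s,\lambda,\xi)$ by $v_t(t-s,\lambda,\xi)$ is controlled by Lemma~\ref{lem_cotasv}.
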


\begin{proof} By a monotone class argument, it is enough to show the result for   continuous bounded functions of the form $\varphi(u,v,y)=\varphi_1(u)\varphi_2(v)\varphi_3(y)$, where  $\varphi_i: \mathbb{R} \to \mathbb{R}$ are  bounded and continuous functions, for $i=1,2,3$. That is, we will show that for $z,x>0$,
	\[
	\lim\limits_{t \to \infty}	\frac{\mathbb{E}^{(\gamma)}_{(z,x)}\Big[\varphi_1(\mathcal{U}_r)\varphi_2(\widetilde{W}_{t-s,t}) \varphi_3(\xi_t)e^{-\gamma \xi_t}\mathbf{1}_{\{\underline{\xi}_t > 0\}}\Big]}{\mathbb{E}^{(e,\gamma)}_x\Big[e^{-\gamma \xi_t} \mathbf{1}_{\{\underline{\xi}_t > 0\}}\Big]} =  \mathbb{E}_{(z,x)}^{(\gamma), \uparrow}[\varphi_1(\mathcal{U}_r)] \mathbb{E}_{\mu_\gamma}^{(e,\gamma), \downarrow}\Big[\varphi_2(\widehat{W}_s)\varphi_3(\xi_0)\Big],
	\]
	where 	\begin{equation}\label{eq_mugamma}
		\mathbb{E}_{\mu_\gamma}^{(e,\gamma), \downarrow}\Big[\varphi_2(\widehat{W}_s)\varphi_3(\xi_0)\Big] = \int_{(0, \infty)} \mathbb{E}_{-y}^{(e,\gamma), \downarrow}\Big[\varphi_2(\widehat{W}_s)\varphi_3(\xi_0)\Big]  \mu_\gamma (\mathrm{d}y).
	\end{equation}
For simplicity on exposition, we assume  $0\leq \varphi_i \leq 1$, for $i=1, 2, 3$.  We first observe from the Markov property that for $t\geq r+s$, we have
	\begin{equation}\label{eq_markov_prop1}
		\mathbb{E}^{(\gamma)}_{(z,x)}\Big[\varphi_1(\mathcal{U}_r)\varphi_2(\widetilde{W}_{t-s,t}) \varphi_3(\xi_t)e^{-\gamma \xi_t}\mathbf{1}_{\{\underline{\xi}_t > 0\}}\Big]= \mathbb{E}_{(z,x)}^{(\gamma)}\left[\varphi_1(\mathcal{U}_r) \Phi_{t-r}(\xi_r)\mathbf{1}_{\{\underline{\xi}_{r} > 0\}}\right], 
	\end{equation}
	where 
	\begin{equation}\label{eq_phiu}
		\Phi_{u}(y):= \mathbb{E}_{y}^{(e,\gamma)} \left[\varphi_2(\widetilde{W}_{u-s,u})\varphi_3(\xi_{u})e^{-\gamma \xi_{u}}\mathbf{1}_{\{\underline{\xi}_{u} > 0\}}\right], \qquad u \geq s,\,\, y>0.
	\end{equation}
	Using the last definition and once again the Markov property,  we deduce the following identity
	\begin{equation}\label{eq_expec_weakly}
		\Phi_{t-r}(y)= \mathbb{E}_{y}^{(e,\gamma)} \left[\Phi_{s}(\xi_{t-r-s})\mathbf{1}_{\{\underline{\xi}_{t-r-s} > 0\}} \right],\qquad y>0.
	\end{equation}
	On the other hand, by Lemma 1 in \cite{hirano2001levy}, we know that for $\delta >0$ and $t\geq v$, 
	\begin{equation*}
		\lim\limits_{t \to \infty} \frac{\mathbb{E}^{(e,\gamma)}_y\left[e^{-(\delta + \gamma)\xi_{t-v}}\mathbf{1}_{\{\underline{\xi}_{t-v} > 0\}}\right]}{\mathbb{E}^{(e,\gamma)}_x\left[e^{-\gamma \xi_t}\mathbf{1}_{\{\underline{\xi}_{t} > 0\}}\right]} = \frac{\widehat{U}^{(\gamma)}(y)}{\widehat{U}^{(\gamma)}(x) }\frac{\displaystyle\int_{0}^{\infty} e^{-( \delta + \gamma) z} U^{(\gamma)}(z) \mathrm{d} z}{\displaystyle\int_{0}^{\infty} e^{- \gamma z} U^{(\gamma)}(z) \mathrm{d}z}.
	\end{equation*}
	Then by the continuity Theorem for the Laplace transform and using identity \eqref{bivLap}, for $h$ bounded and  continuous $\mu_\gamma$-a.s., it follows 
	\begin{equation}\label{eq_lim_prop1}
		\lim\limits_{t \to \infty} \frac{\mathbb{E}^{(e, \gamma)}_y\left[h(\xi_{t-v}) e^{- \gamma \xi_{t-v}}\mathbf{1}_{\{\underline{\xi}_{t-v} > 0\}}\right]}{\mathbb{E}^{(e, \gamma)}_x\left[e^{-\gamma \xi_t}\mathbf{1}_{\{\underline{\xi}_{t} > 0\}}\right]} = \frac{\widehat{U}^{(\gamma)}(y)}{\widehat{U}^{(\gamma)}(x)} \int_{0}^{\infty} h(z)  \mu_\gamma (\mathrm{d} z).
	\end{equation}
	If $h$ is positive and continuous but not bounded, we can truncate the function $h$, i.e., fix $n\in \mathbb{N}$ and define $h_n(x):= h(x)\mathbf{1}_{\{h(x)\leq n \}}$. Then by \eqref{eq_lim_prop1}, we have
	\[
	\begin{split}
		\liminf_{t\to \infty}\frac{\mathbb{E}^{(e,\gamma)}_y\left[h(\xi_{t-v}) e^{- \gamma \xi_{t-v}}\mathbf{1}_{\{\underline{\xi}_{t-v} > 0\}}\right]}{\mathbb{E}^{(e,\gamma)}_x\left[e^{-\gamma \xi_t}\mathbf{1}_{\{\underline{\xi}_{t} > 0\}}\right]} &\geq \liminf_{t\to\infty}\frac{\mathbb{E}^{(e,\gamma)}_y\left[h_n(\xi_{t-v}) e^{- \gamma \xi_{t-v}}\mathbf{1}_{\{\underline{\xi}_{t-v} > 0\}}\right]}{\mathbb{E}^{(e,\gamma)}_x\left[e^{-\gamma \xi_t}\mathbf{1}_{\{\underline{\xi}_{t} > 0\}}\right]} \\ &= \frac{\widehat{U}^{(\gamma)}(y)}{\widehat{U}^{(\gamma)}(x)} \int_{0}^{\infty} h_n(z)  \mu_\gamma (\mathrm{d} z).
	\end{split}
	\]
	On the other hand, since $h_n(x) \to h(x)$ as $n \to \infty$, by Fatou's Lemma 
	\[
	\liminf_{n\to \infty} \int_{0}^{\infty} h_n(z)  \mu_\gamma (\mathrm{d} z) \geq \int_{0}^{\infty} h(z)  \mu_\gamma (\mathrm{d} z).
	\]
	Thus putting both pieces together, we get
	\begin{equation}\label{eq_liminf_pro1}
		\liminf_{t \to \infty} \frac{\mathbb{E}^{(e,\gamma)}_y\left[h(\xi_{t-v}) e^{- \gamma \xi_{t-v}}\mathbf{1}_{\{\underline{\xi}_{t-v} > 0\}}\right]}{\mathbb{E}^{(e,\gamma)}_x\left[e^{-\gamma \xi_t}\mathbf{1}_{\{\underline{\xi}_{t} > 0\}}\right]} \geq \frac{\widehat{U}^{(\gamma)}(y)}{\widehat{U}^{(\gamma)}(x)} \int_{0}^{\infty} h(z)  \mu_\gamma (\mathrm{d} z).
	\end{equation}
	We want to apply the previous inequality to the function $h(x)=\Phi_s(x)e^{\gamma x}$. To do so, we need to verify that $\Phi_{s}(\cdot)$ is a positive and $\mu_\gamma$-a.s.-continuous function. First, we observe that discontinuities of $\Phi_{s}(\cdot)$ correspond to discontinuities of the map 
	\[
	{\tt e}: y \mapsto \mathbb{P}^{(e,\gamma)}\big(\underline{\xi}_t > -y\big).
	\] Since ${\tt e}(\cdot)$ is bounded and monotone,  it has a countable number of discontinuities. Thus $\Phi_{s}(\cdot)$ is continuous almost everywhere with respect to the Lebesgue measure and therefore $\mu_\gamma$-a.s.
	
	Now, from \eqref{eq_expec_weakly} and \eqref{eq_liminf_pro1} with $v=r+s$ and  $h(x)=\Phi_s(x)e^{\gamma x}$, we have
	\begin{equation}\label{eq2_liminf_pro1}
		\begin{split}
			\liminf_{t \to \infty} \frac{\Phi_{t-r}(y)}{\mathbb{E}^{(e,\gamma)}_x\left[e^{-\gamma \xi_t}\mathbf{1}_{\{\underline{\xi}_{t} > 0\}}\right]}&= \liminf_{t \to \infty} \frac{\mathbb{E}^{(e,\gamma)}_y\left[\Phi_{s}(\xi_{t-v}) e^{\gamma \xi_{t-v}}e^{- \gamma \xi_{t-v}}\mathbf{1}_{\{\underline{\xi}_{t-v} > 0\}}\right]}{\mathbb{E}^{(e,\gamma)}_x\left[e^{-\gamma \xi_t}\mathbf{1}_{\{\underline{\xi}_{t} > 0\}}\right]} \\ 
			&= \liminf_{t \to \infty} \frac{\mathbb{E}^{(e,\gamma)}_y\left[h(\xi_{t-v}) e^{- \gamma \xi_{t-v}}\mathbf{1}_{\{\underline{\xi}_{t-v} > 0\}}\right]}{\mathbb{E}^{(e,\gamma)}_x\left[e^{-\gamma \xi_t}\mathbf{1}_{\{\underline{\xi}_{t} > 0\}}\right]} \\ &\geq   \frac{\widehat{U}^{(\gamma)}(y)}{\widehat{U}^{(\gamma)}(x)} \int_{0}^{\infty} \Phi_s(z)e^{\gamma z}  \mu_\gamma (\mathrm{d}z). 
		\end{split}
	\end{equation}
	In view of identity \eqref{eq_markov_prop1} and the above inequality, replacing  $y$ by $\xi_r$, we get  from Fatou's Lemma
	\begin{equation}\label{eq3_liminf_prop1}
		\begin{split}
			\liminf_{t \to \infty} \frac{\mathbb{E}^{(\gamma)}_{(z,x)}\left[\varphi_1(\mathcal{U}_r)\varphi_2(\widetilde{W}_{t-s,t})\varphi_3(\xi_t)e^{-\gamma \xi_t}\mathbf{1}_{\{\underline{\xi}_{t} > 0\}}\right] }{\mathbb{E}^{(e,\gamma)}_x\left[e^{-\gamma \xi_t} \mathbf{1}_{\{\underline{\xi}_{t} > 0\}}\right]}&=  \liminf_{t \to \infty} \frac{ \mathbb{E}_{(z,x)}^{(\gamma)}\left[\varphi_{1}(\mathcal{U}_r) \Phi_{t-r}(\xi_r)\mathbf{1}_{\{\underline{\xi}_{r} > 0\}}\right] }{\mathbb{E}^{(e,\gamma)}_x\left[e^{-\gamma \xi_t}\mathbf{1}_{\{\underline{\xi}_{t} > 0\}}\right]} \\ 
			&\hspace{-2.5cm}\geq \frac{\mathbb{E}^{(\gamma)}_{(z,x)}\Big[\varphi_1(\mathcal{U}_r)\widehat{U}^{(\gamma)}(\xi_r)\mathbf{1}_{\{\underline{\xi}_{r} > 0\}}\Big]}{\widehat{U}^{(\gamma)}(x)} \int_{0}^{\infty} \Phi_s(z)e^{\gamma z}  \mu_\gamma (\mathrm{d} z)\\ 
			&\hspace{-2.5cm}= \mathbb{E}_{(z,x)}^{(\gamma), \uparrow}[\varphi_1(\mathcal{U}_r)] \int_{0}^{\infty} \Phi_s(z)e^{\gamma z}  \mu_\gamma (\mathrm{d} z).
		\end{split}
	\end{equation}
	Now, we use  the duality relationship, with respect to the Lebesgue measure,  between $\xi$ and $\widehat{\xi}$ (see for instance Lemma 3 in \cite{hirano2001levy}) to get 
	\[
	\begin{split}
		\int_{0}^{\infty} \Phi_s(z)e^{\gamma z} e^{-\gamma z} U^{(\gamma)}(z) \mathrm{d} z &=  \int_{0}^{\infty} \mathbb{E}_{z}^{(e,\gamma)} \left[\varphi_2(\widetilde{W}_{0,s})\varphi_3(\xi_{s})e^{-\gamma \xi_{s}}\mathbf{1}_{\{\underline{\xi}_{s} > 0\}} \right]U^{(\gamma)}(z)\mathrm{d} z\\ 
		&=   \int_{0}^\infty \mathbb{E}_{-z}^{(e,\gamma)} \left[\varphi_2(\widehat{W}_s)U^{(\gamma)}(-\xi_s)\mathbf{1}_{\{\overline{\xi}_{s} < 0\}} \right] \varphi_3(z) e^{-\gamma z}  \mathrm{d} z \\ 
		&=  \int_{0}^\infty \mathbb{E}_{-z}^{(e,\gamma), \downarrow} \left[\varphi_2(\widehat{W}_s)\varphi_3(\xi_0) \right]e^{-\gamma z}  U^{(\gamma)}(z)\mathrm{d} z.
	\end{split}
	\]
	Using this equality in \eqref{eq3_liminf_prop1}, we obtain 
	\begin{equation}\label{eq4_liminf_prop1}
		\liminf_{t \to \infty} \frac{\mathbb{E}^{(\gamma)}_{(z,x)}\left[\varphi_1(\mathcal{U}_r)\varphi_2(\widetilde{W}_{t-s,t})\varphi_3(\xi_t)e^{-\gamma \xi_t}\mathbf{1}_{\{\underline{\xi}_{t} > 0\}}\right] }{\mathbb{E}^{(e,\gamma)}_x\left[e^{-\gamma \xi_t} \mathbf{1}_{\{\underline{\xi}_{t} > 0\}}\right]} \geq  \mathbb{E}_{(z,x)}^{(\gamma), \uparrow}[\varphi_1(\mathcal{U}_r)] \mathbb{E}^{(e,\gamma), \downarrow}_{\mu_\gamma}[\varphi_2(\widehat{W}_s) \varphi_3(\xi_0)].
	\end{equation}
	On the other hand, by  taking $y=x$,  $v=0$  and $h(z)=\varphi_3(z)$ in \eqref{eq_lim_prop1}, we deduce
	\[
	\lim\limits_{t \to \infty} \frac{\mathbb{E}^{(e,\gamma)}_x\left[\varphi_3(\xi_{t}) e^{- \gamma \xi_{t}}\mathbf{1}_{\{\underline{\xi}_{t} > 0\}}\right]}{\mathbb{E}^{(e,\gamma)}_x\left[e^{-\gamma \xi_t}\mathbf{1}_{\{\underline{\xi}_{t} > 0\}}\right]} =  \int_{0}^{\infty} \varphi_3(z)  \mu_\gamma (\mathrm{d} z)=\mathbb{E}^{(e,\gamma), \downarrow}_{\mu_\gamma}[\varphi_3(\xi_0)].
	\]
	Using this last identity and replacing $\varphi_1(\mathcal{U}_r)$ by $1-\varphi(\mathcal{U}_r)$ and  $\varphi_2\equiv 1$  in \eqref{eq4_liminf_prop1},  we get
	\[
	\begin{split}
		\mathbb{E}^{(\gamma), \uparrow}_{(z,x)}\Big[1-\varphi_1(\mathcal{U}_r)\Big] \mathbb{E}^{(e,\gamma), \downarrow}_{\mu_\gamma}&[\varphi_3(\xi_0)]  \leq  
		\liminf_{t \to \infty} \frac{\mathbb{E}^{(\gamma)}_{(z,x)}\Big[\Big(1-\varphi_1(\mathcal{U}_r)\Big)\varphi_3(\xi_t)e^{-\gamma \xi_t}\mathbf{1}_{\{\underline{\xi}_{t} > 0\}}\Big] }{\mathbb{E}^{(e,\gamma)}_x\left[e^{-\gamma \xi_t} \mathbf{1}_{\{\underline{\xi}_{t} > 0\}}\right]} \\ &= \mathbb{E}^{(e,\gamma), \downarrow}_{\mu_\gamma}[\varphi_3(\xi_0)]- \limsup_{t \to \infty} \frac{\mathbb{E}^{(\gamma)}_{(z,x)}\Big[\varphi_1(\mathcal{U}_r)\varphi_3(\xi_t)e^{-\gamma \xi_t}\mathbf{1}_{\{\underline{\xi}_{t} > 0\}}\Big] }{\mathbb{E}^{(e,\gamma)}_x\left[e^{-\gamma \xi_t} \mathbf{1}_{\{\underline{\xi}_{t} > 0\}}\right]}.
	\end{split}
	\]
	Therefore,
	\begin{eqnarray*}
		\limsup_{t \to \infty} \frac{\mathbb{E}^{(\gamma)}_{(z,x)}\Big[\varphi_1(\mathcal{U}_r)\varphi_3(\xi_t)e^{-\gamma \xi_t}\mathbf{1}_{\{\underline{\xi}_{t} > 0\}}\Big] }{\mathbb{E}^{(e,\gamma)}_x\left[e^{-\gamma \xi_t} \mathbf{1}_{\{\underline{\xi}_{t} > 0\}}\right]} \leq 
		\mathbb{E}^{(\gamma), \uparrow}_{(z,x)}[\varphi_1(\mathcal{U}_r)] \mathbb{E}^{(e,\gamma), \downarrow}_{\mu_\gamma}[\varphi_3(\xi_0)].
	\end{eqnarray*}
	In other words, by taking $\varphi_2\equiv 1$ in \eqref{eq4_liminf_prop1}  and the above inequality, we obtain the identity
	\begin{eqnarray*}
		\lim_{t \to \infty} \frac{\mathbb{E}^{(\gamma)}_{(z,x)}\Big[\varphi_1(\mathcal{U}_r)\varphi_3(\xi_t)e^{-\gamma \xi_t}\mathbf{1}_{\{\underline{\xi}_{t} > 0\}}\Big] }{\mathbb{E}^{(e,\gamma)}_x\left[e^{-\gamma \xi_t} \mathbf{1}_{\{\underline{\xi}_{t} > 0\}}\right]} =
		\mathbb{E}^{(\gamma), \uparrow}_{(z,x)}[\varphi_1(\mathcal{U}_r)] \mathbb{E}^{(e,\gamma), \downarrow}_{\mu_\gamma}[\varphi_3(\xi_0)].
	\end{eqnarray*}
	
	Finally we pursue the same strategy as before, that is to say we replace    $\varphi_2(\widetilde{W}_{t-s,t})$  by $1-\varphi_2(\widetilde{W}_{t-s,t})$  in  \eqref{eq4_liminf_prop1} to obtain
	\[
	\begin{split}
		\liminf_{t \to \infty}\frac{\mathbb{E}^{(\gamma)}_{(z,x)}\left[\varphi_1(\mathcal{U}_r)\Big(1-\varphi_2(\widetilde{W}_{t-s,t})\Big)\varphi_3(\xi_t)e^{-\gamma \xi_t}\mathbf{1}_{\{\underline{\xi}_{t} > 0\}}\right] }{\mathbb{E}^{(e,\gamma)}_x\left[e^{-\gamma \xi_t} \mathbf{1}_{\{\underline{\xi}_{t} > 0\}}\right]}&\\
		& \hspace{-3cm} \geq  \mathbb{E}_{(z,x)}^{(\gamma), \uparrow}[\varphi_1(\mathcal{U}_r)] \mathbb{E}^{(e,\gamma), \downarrow}_{\mu_\gamma}\Big[\Big(1-\varphi_2(\widehat{W}_{s})\Big)\varphi_3(\xi_0)\Big].
	\end{split}
	\]
	Then, it follows 
	\[\begin{split}
		\limsup_{t\to\infty}&\frac{\mathbb{E}^{(\gamma)}_{(z,x)}\left[\varphi_1(\mathcal{U}_r)\varphi_2(\widetilde{W}_{t-s,t})\varphi_3(\xi_t)e^{-\gamma \xi_t}\mathbf{1}_{\{\underline{\xi}_{t} > 0\}}\right] }{\mathbb{E}^{(e,\gamma)}_x\left[e^{-\gamma \xi_t} \mathbf{1}_{\{\underline{\xi}_{t} > 0\}}\right]}\\ & \hspace{2cm}\le  \mathbb{E}_{(z,x)}^{(\gamma), \uparrow}[\varphi_1(\mathcal{U}_r)] \mathbb{E}^{(e,\gamma), \downarrow}_{\mu_\gamma}\Big[\varphi_2(\widehat{W}_s) \varphi_3(\xi_0)\Big].
	\end{split}\]
	Finally, putting all pieces together,  we  conclude that 
	\[
	\lim_{t\to\infty}\frac{\mathbb{E}^{(\gamma)}_{(z,x)}\left[\varphi_1(\mathcal{U}_r)\varphi_2(\widetilde{W}_{t-s,t})\varphi_3(\xi_t)e^{-\gamma \xi_t}\mathbf{1}_{\{\underline{\xi}_{t} > 0\}}\right] }{\mathbb{E}^{(e,\gamma)}_x\left[e^{-\gamma \xi_t} \mathbf{1}_{\{\underline{\xi}_{t} > 0\}}\right]}= \mathbb{E}_{(z,x)}^{(\gamma), \uparrow}[\varphi_1(\mathcal{U}_r)] \mathbb{E}^{(e,\gamma), \downarrow}_{\mu_\gamma}\Big[\varphi_2(\widehat{W}_s) \varphi_3(\xi_0)\Big],
	\]
	as expected. 
\end{proof}

The following lemmas are preparatory results for the proof of Theorem \ref{prop_weakly_pareja}.  We first observe from the Wiener-Hopf factorisation  that there exists a non decreasing function $\Psi_0$ satisfying,
$$ \psi_0(\lambda) = \lambda \Psi_0(\lambda), \qquad \text{for} \quad \lambda\geq 0,$$ where $\Psi_0$ is the Laplace exponent of a subordinator and takes the form 
\begin{equation}\label{eq_phi0sub}
\Psi_0(\lambda)=\varrho^2\lambda+\int_{(0,\infty)} (1-e^{-\lambda x})\mu(x,\infty){\rm d}x.
\end{equation}
From  \eqref{betacond},  it follows that $\Psi_0(\lambda)$ is regularly varying at 0 with index $\beta$.

\begin{lemma}\label{lem_cotapsi0}
Let $x, \lambda>0$  and assume that  \eqref{betacond} holds, then
$$\lim\limits_{s \to \infty}\lim\limits_{t\to \infty}e^{-t \Phi_{\xi}(\gamma)} t^{3/2}\int_s^{t-s}\mathbb{E}_{x}^{(e)}\left[\Psi_0(\lambda e^{-\xi_u})\mathbf{1}_{\{\underline{\xi}_t>0\}}\right] {\rm d} u =0.$$
\end{lemma}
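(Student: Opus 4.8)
The plan is to start by removing the exponential factor through the Esscher transform \eqref{eq_medida_Essher}. For $u\le t$ the variable $\Psi_0(\lambda e^{-\xi_u})\mathbf{1}_{\{\underline{\xi}_t>0\}}$ is $\mathcal F^{(e)}_t$-measurable, so
\[
e^{-t\Phi_\xi(\gamma)}\,\mathbb{E}^{(e)}_{x}\!\Big[\Psi_0(\lambda e^{-\xi_u})\mathbf{1}_{\{\underline{\xi}_t>0\}}\Big]
=e^{\gamma x}\,\mathbb{E}^{(e,\gamma)}_{x}\!\Big[e^{-\gamma\xi_t}\Psi_0(\lambda e^{-\xi_u})\mathbf{1}_{\{\underline{\xi}_t>0\}}\Big]=:e^{\gamma x}\,t^{-3/2}\,\Lambda_t(u),
\]
and under $\mathbb{P}^{(e,\gamma)}$ the process $\xi$ has mean $\Phi'_\xi(\gamma)=0$, hence oscillates; by \eqref{eq_moments} it moreover has finite variance, so the ladder-height estimates used below apply to it. It therefore suffices to show that $\int_s^{t-s}\Lambda_t(u)\,\mathrm{d}u\to0$ as $t\to\infty$ and then $s\to\infty$.

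The analytic heart of the proof is the uniform estimate
\begin{equation}\tag{$\star$}
\mathbb{E}^{(e,\gamma)}_{w}\!\Big[e^{-q\xi_v}\mathbf{1}_{\{\underline{\xi}_v>0\}}\Big]\le C_q\,\widehat U^{(\gamma)}(w)\,v^{-3/2},\qquad v\ge1,\ w>0,\ q>0,
\end{equation}
which I would obtain by a short bootstrap: starting from the Sparre--Andersen/Iglehart bound $\mathbb{P}^{(e,\gamma)}_{w}(\underline{\xi}_v>0)\le C\,\widehat U^{(\gamma)}(w)\,v^{-1/2}$, conditioning at time $v/2$, and using the elementary facts $we^{-qw}\le(qe)^{-1}$ and $\widehat U^{(\gamma)}(w)\le C_2 w$ (see \eqref{grandO}), each iteration improves the exponent by $1/2$, and two steps reach $3/2$. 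On the other hand \eqref{betacond} makes $\Psi_0$ (with $\psi_0=\lambda\Psi_0$, cf.\ \eqref{eq_phi0sub}) regularly varying at $0$ of index $\beta$, so Potter's bound gives $\Psi_0(\lambda e^{-w})\le K_\varepsilon e^{-(\beta-\varepsilon)w}$ for all $w\ge0$ and any $\varepsilon\in(0,\beta)$. Combining this with $\widehat U^{(\gamma)}(w)\le C_2 w$, $(\star)$ (taken with $q=(\beta-\varepsilon)/2$, resp.\ $q=\gamma$) and the $h$-transform identity
\[
g(u):=\mathbb{E}^{(e,\gamma),\uparrow}_{x}\!\big[\Psi_0(\lambda e^{-\xi_u})\big]=\frac{1}{\widehat U^{(\gamma)}(x)}\,\mathbb{E}^{(e,\gamma)}_{x}\!\Big[\Psi_0(\lambda e^{-\xi_u})\widehat U^{(\gamma)}(\xi_u)\mathbf{1}_{\{\underline{\xi}_u>0\}}\Big]
\]
(the analogue of \eqref{defPuparrowx}--\eqref{eq_fctharm} for the $\gamma$-tilted process, legitimate since $\xi$ does not drift to $-\infty$ under $\mathbb{P}^{(e,\gamma)}$), one checks that $g$ and
\[
\widehat g(r):=\int_0^\infty\Psi_0(\lambda e^{-z})\,\mathbb{E}^{(e,\gamma)}_{z}\!\big[e^{-\gamma\xi_r}\mathbf{1}_{\{\underline{\xi}_r>0\}}\big]\,U^{(\gamma)}(z)\,\mathrm{d}z
\]
are bounded on $[0,1]$ and $O(u^{-3/2})$, resp.\ $O(r^{-3/2})$, on $[1,\infty)$; in particular $g,\widehat g\in L^1([0,\infty))$.

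With this in hand I would split $\int_s^{t-s}\Lambda_t(u)\,\mathrm{d}u=\int_s^{t/2}(\cdot)+\int_{t/2}^{t-s}(\cdot)$. For $u\le t/2$ the Markov property at time $u$ gives $\Lambda_t(u)=t^{3/2}\mathbb{E}^{(e,\gamma)}_{x}[\Psi_0(\lambda e^{-\xi_u})F_{t-u}(\xi_u)\mathbf{1}_{\{\underline{\xi}_u>0\}}]$ with $F_v(y):=\mathbb{E}^{(e,\gamma)}_{y}[e^{-\gamma\xi_v}\mathbf{1}_{\{\underline{\xi}_v>0\}}]$; by \eqref{eq_weakly_cota_hirano} one has $t^{3/2}F_{t-u}(y)\to\frac{A_\gamma}{\gamma\kappa^{(\gamma)}(0,\gamma)}\widehat U^{(\gamma)}(y)$ for each fixed $u$, while $(\star)$ yields the $t$-uniform domination $t^{3/2}F_{t-u}(y)\le C\,\widehat U^{(\gamma)}(y)$ for $u\le t/2$; dominated convergence then gives first $\Lambda_t(u)\to\frac{A_\gamma\widehat U^{(\gamma)}(x)}{\gamma\kappa^{(\gamma)}(0,\gamma)}g(u)$ and, since $\Lambda_t(u)\le C\,\widehat U^{(\gamma)}(x)\,g(u)$ with $g$ integrable, $\int_s^{t/2}\Lambda_t(u)\,\mathrm{d}u\to\frac{A_\gamma\widehat U^{(\gamma)}(x)}{\gamma\kappa^{(\gamma)}(0,\gamma)}\int_s^\infty g(u)\,\mathrm{d}u$. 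For $u\in[t/2,t-s]$ I would change variables $r=t-u\in[s,t/2]$, apply the Markov property at time $t-r$ to write $\Lambda_t(t-r)=t^{3/2}\mathbb{E}^{(e,\gamma)}_{x}[\widetilde\phi_r(\xi_{t-r})e^{-\gamma\xi_{t-r}}\mathbf{1}_{\{\underline{\xi}_{t-r}>0\}}]$ with $\widetilde\phi_r(y):=\Psi_0(\lambda e^{-y})e^{\gamma y}F_r(y)$ (bounded and, by monotonicity of $y\mapsto\mathbb{P}^{(e)}(\underline{\xi}_r>-y)$, continuous off a $\mu_\gamma$-null set for each fixed $r$), and invoke the limit \eqref{eq_lim_prop1} to get $\Lambda_t(t-r)\to A_\gamma\,\widehat U^{(\gamma)}(x)\,\widehat g(r)$; the $t$-uniform bound $\Lambda_t(t-r)\le C\,\widehat U^{(\gamma)}(x)\,r^{-3/2}$ for $r\le t/2$ (here it is essential that $(\star)$ carries the sharp rate $v^{-3/2}$, so the dominating function $r^{-3/2}$ is integrable) together with $\widehat g\in L^1$ gives $\int_{t/2}^{t-s}\Lambda_t(u)\,\mathrm{d}u\to A_\gamma\,\widehat U^{(\gamma)}(x)\int_s^\infty\widehat g(r)\,\mathrm{d}r$. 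Adding the two,
\[
\lim_{t\to\infty}\int_s^{t-s}\Lambda_t(u)\,\mathrm{d}u=\frac{A_\gamma\widehat U^{(\gamma)}(x)}{\gamma\kappa^{(\gamma)}(0,\gamma)}\int_s^\infty g(u)\,\mathrm{d}u+A_\gamma\,\widehat U^{(\gamma)}(x)\int_s^\infty\widehat g(r)\,\mathrm{d}r,
\]
which tends to $0$ as $s\to\infty$ since $g$ and $\widehat g$ are integrable on $[0,\infty)$.

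The step I expect to be most delicate is the contribution of the upper endpoint $u\approx t-s$: there the tilting by $e^{-\gamma\xi_t}$ forces $\xi_u$ to be of order $1$ rather than of order $\sqrt t$, so $\Psi_0(\lambda e^{-\xi_u})$ is \emph{not} small pointwise, and the required decay is recovered only after integrating and reversing time. Making this rigorous needs the uniform estimate $(\star)$ with the \emph{sharp} exponent $-3/2$ (the crude $v^{-1/2}$ bound yields a dominating function diverging with $t$) together with the separate treatment of the two halves, each with its own $t$-uniform dominator; establishing $(\star)$ in this form and checking that $\widetilde\phi_r$ satisfies the hypotheses of \eqref{eq_lim_prop1} for every fixed $r$ are the points requiring the most care, the rest being a routine use of the Markov property, Fubini's theorem and dominated convergence.
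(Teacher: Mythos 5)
Your proof takes a genuinely different route from the paper's, so let me compare them and also flag one place where your argument is too thin.

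The paper's proof is short and direct: it conditions at time $u$, splits the event $\{\underline\xi_t>0\}$ on $\{\xi_u>x_0\}$ versus $\{\xi_u\le x_0\}$, and then uses the uniform version of \eqref{eq_weakly_cota_hirano} together with the Esscher transform and the $O(u^{-3/2})$ decay of $\mathbb{E}^{(e,\gamma)}_x\big[e^{-\delta\xi_u}\mathbf{1}_{\{\underline\xi_u>0\}}\big]$ (taken directly from Lemma~1 in Hirano \cite{hirano2001levy}) to bound the integrand pointwise by $C\,u^{-3/2}(t-u)^{-3/2}e^{\Phi_\xi(\gamma)t}$. After multiplying by $t^{3/2}e^{-\Phi_\xi(\gamma)t}$ the remaining double-tail integral is trivially $O(s^{-1/2})$, and one is done. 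You instead compute the \emph{actual limit} of $\int_s^{t-s}\Lambda_t(u)\,\mathrm du$ by splitting at $u=t/2$, running the Markov property forward on the first half and backward (time-reversed) on the second half, and invoking \eqref{eq_lim_prop1}, and then you let $s\to\infty$ through the integrability of $g$ and $\widehat g$. That is a legitimate and in fact more informative argument — it identifies the finite limit $\frac{A_\gamma\widehat U^{(\gamma)}(x)}{\gamma\kappa^{(\gamma)}(0,\gamma)}\int_s^\infty g+A_\gamma\widehat U^{(\gamma)}(x)\int_s^\infty\widehat g$ rather than just an upper bound — but it is longer, and it requires exactly the same analytic input: the sharp $v^{-3/2}$ rate for $\mathbb{E}^{(e,\gamma)}_y\big[e^{-q\xi_v}\mathbf{1}_{\{\underline\xi_v>0\}}\big]$, uniformly in $y$.

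The one place I would not accept your argument as written is the derivation of $(\star)$. You claim that conditioning at $v/2$, combined with $we^{-qw}\le(qe)^{-1}$ and the $v^{-1/2}$ Iglehart bound, bootstraps the exponent from $1/2$ to $3/2$ in two steps, but that is not spelled out and is not obvious: if you condition at $v/2$ and apply the $v^{-1/2}$ bound to the second half, the harmonicity \eqref{eq_fctharm} makes the first half collapse to $\widehat U^{(\gamma)}(w)$ and you recover only $v^{-1/2}$ again; and you cannot simply pull out $e^{-q\xi_{v/2}}$ because $\Phi'_\xi(\gamma)=0$ means $\mathbb{E}^{(e,\gamma)}[e^{-q\xi_{v/2}}]$ grows exponentially. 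The gain from $v^{-1/2}$ to $v^{-3/2}$ comes from the bivariate local estimate (both $\underline\xi_v>0$ and $\xi_v$ staying $O(1)$, rather than the typical $O(\sqrt v)$), which needs duality on the second half and a careful uniform treatment of the resulting product, not merely $we^{-qw}\le(qe)^{-1}$. This is precisely what Lemma~1 of Hirano \cite{hirano2001levy} supplies, and it is what the paper cites; if you replace your sketchy bootstrap by a direct appeal to that result (or prove $(\star)$ in the detail it deserves), the rest of your argument — the boundedness of $\widetilde\phi_r$ (using $F_r(y)\le C_re^{-\gamma y}$ together with Potter), the two dominated-convergence passes, and the integrability of $g,\widehat g$ — goes through as you describe and yields the Lemma.
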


\begin{proof}
Let $x>0$ and $\lambda>0$. From  the Markov property, we observe
	\[\begin{split}
		\mathbb{E}_{x}^{(e)}\left[\Psi_0(\lambda e^{-\xi_u})\mathbf{1}_{\{\underline{\xi}_t>0\}}\right]&=\mathbb{E}_{x}^{(e)}\left[\Psi_0(\lambda e^{-\xi_u})\mathbf{1}_{\{\underline{\xi}_u>0\}}\mathbb{P}_{\xi_u}^{(e)}\left(\underline{\xi}_{t-u}>0\right)\right].
	\end{split}\]
 Next we take $x_0 > x$ and from the monotonicity of $z\mapsto\mathbb{P}_{z}^{(e)}(\underline{\xi}_{t-u}>0)$, we obtain
\[
\begin{split}
	\mathbb{E}_{x}^{(e)}\left[\Psi_0(\lambda e^{-\xi_u})\mathbf{1}_{\{\underline{\xi}_t>0\}}\right]
	&\le \mathbb{E}_{x}^{(e)}\left[\Psi_0(\lambda e^{-\xi_u})\mathbf{1}_{\{\underline{\xi}_u>0\}}\mathbb{P}_{\xi_u}^{(e)}\left(\underline{\xi}_{t-u}>0\right)\mathbf{1}_{\{\xi_u>x_0\}}\right]\\
	&\hspace{1cm}+\mathbb{E}_{x}^{(e)}\left[\Psi_0(\lambda e^{-\xi_u})\mathbf{1}_{\{\underline{\xi}_u>0\}}\mathbf{1}_{\{\xi_u\le x_0\}}\right]\mathbb{P}_{x_0+x}^{(e)}\left(\underline{\xi}_{t-u}>0\right).
\end{split}
\]
Now using the asymptotic behaviour given in \eqref{eq_weakly_cota_hirano} and the Escheer transform \eqref{eq_medida_Essher}, for $t$ large enough, we have 
\begin{equation}\label{eq:cota}
\begin{split}
	\mathbb{E}_{x}^{(e)}\Big[\Psi_0&(\lambda e^{-\xi_u})\mathbf{1}_{\{\underline{\xi}_t>0\}}\big]\\ &  \hspace{1cm} \le C_{\gamma} \mathbb{E}_{x}^{(e)}\left[\Psi_0(\lambda e^{-\xi_u})\mathbf{1}_{\{\underline{\xi}_u>0\}}\mathbf{1}_{\{\xi_u>x_0\}}e^{\gamma \xi_u} \widehat{U}^{(\gamma)} (\xi_u)\right](t-u)^{-3/2}  e^{\Phi_\xi(\gamma)(t-u)}\\
	&\hspace{1.5cm}+C_{\gamma, x+x_0}\mathbb{E}_{x}^{(e)}\left[\Psi_0(\lambda e^{-\xi_u})\mathbf{1}_{\{\underline{\xi}_u>0\}}\mathbf{1}_{\{\xi_u\le x_0\}}\right]\left(t-u\right)^{-3/2}e^{\Phi_\xi(\gamma)(t-u)}\\
	&\hspace{1cm} \le  C_{\gamma} \mathbb{E}_{x}^{(e, \gamma)}\left[\Psi_0(\lambda e^{-\xi_u})\mathbf{1}_{\{\underline{\xi}_u>0\}}\mathbf{1}_{\{\xi_u>x_0\}}\widehat{U}^{(\gamma)} (\xi_u) \right]\left(t-u\right)^{-3/2}  e^{\Phi_\xi(\gamma)t}\\
	&\hspace{1.5cm}+C_{\gamma, x+x_0}\mathbb{E}_{x}^{(e)}\left[\Psi_0(\lambda e^{-\xi_u})\mathbf{1}_{\{\underline{\xi}_u>0\}}\mathbf{1}_{\{\xi_u\le x_0\}}\right]\left(t-u\right)^{-3/2}  e^{\Phi_\xi(\gamma)(t-u)},
\end{split}
\end{equation}
where $C_{\gamma}$ and $C_{\gamma, x+x_0}$ are strictly positive constants. 

First, we deal with the first expectation in the right-hand side of the previous inequality. Recalling that $\Phi_\xi^{\prime\prime}(\gamma)<\infty$, we get  from Corollary 5.3 in \cite{kyprianou2014fluctuations}   that 
\[
y^{-1}\widehat{U}^{(\gamma)}(y)\to \frac{1}{\widehat{\mathbb{E}}^{(e,\gamma)}[H_1]}, \qquad \textrm{as} \quad y\to \infty.
\]
Furthermore, since $\widehat{U}^{(\gamma)}$ is increasing then the map $y\mapsto e^{-\frac{\varsigma}{2} y}\widehat{U}^{(\gamma)}(y)$ is bounded  for any $\varsigma \in (0,\beta)$ and from \eqref{betacond}, we also deduce  that the map $y\mapsto e^{-\frac{\varsigma}{2} y}\ell(\lambda e^{-y})$ is also bounded. 
With these observations in mind and, it follows, for $u$ large enough, that
\[\begin{split}
	\mathbb{E}_{x}^{(e, \gamma)}\left[\Psi_0(\lambda e^{-\xi_u})\mathbf{1}_{\{\underline{\xi}_u>0\}}\mathbf{1}_{\{\xi_u>x_0\}}\widehat{U}^{(\gamma)} (\xi_u) \right]
	\leq C_\lambda  \mathbb{E}_{x}^{(e, \gamma)}\left[e^{-(\beta-\frac{\varsigma}{2})\xi_u}\mathbf{1}_{\{\underline{\xi}_u>0\}} \right],
\end{split}\]
where $C_\lambda$ is a strictly positive constants. According to Lemma 1 in \cite{hirano2001levy}, we have that for $u$ sufficiently large there exists $C_{\lambda, \beta, x}$ such that 
\[\begin{split}
	\mathbb{E}_{x}^{(e, \gamma)}\left[\Psi_0(\lambda e^{-\xi_u})\mathbf{1}_{\{\underline{\xi}_u>0\}}\mathbf{1}_{\{\xi_u>x_0\}}\widehat{U}^{(\gamma)} (\xi_u) \right]& \leq C_{\lambda, \beta, x} u^{-3/2}.
\end{split}\]
  For the second expectation in \eqref{eq:cota},  we use the monotonicity of  $\Psi_0$ to get
\[
\begin{split}
	\mathbb{E}_{x}^{(e)}\left[\Psi_0(\lambda e^{-\xi_u})\mathbf{1}_{\{\underline{\xi}_u>0\}}\mathbf{1}_{\{\xi_u\le x_0\}}\right] & \leq 
\Psi_0(\lambda)\mathbb{P}_{x}^{(e)}\left(\underline{\xi}_u>0\right) \leq \widehat{C}_{\gamma, x, \lambda} u^{-3/2}e^{\Phi_\xi(\gamma)u},
\end{split}
\]
where $\widehat{C}_{\gamma, x, \lambda}$ is a positive constant.  Putting all pieces together in \eqref{eq:cota}, we deduce, for $t$ large enough, that
 \[	\mathbb{E}_{x}^{(e)}\Big[\Psi_0(\lambda e^{-\xi_u})\mathbf{1}_{\{\underline{\xi}_t>0\}}\big] \leq C_{\lambda, \beta, x, \gamma} u^{-3/2} (t-u)^{-3/2} e^{\Phi_{\xi}(\gamma) t},\]
 where $C_{\lambda, \beta, x, \gamma}>0$. Finally, observe that for $t$ large enough
\[\begin{split}
	e^{-t \Phi_{\xi}(\gamma)} t^{3/2} \int_s^{t-s}\mathbb{E}_{x}^{(e)}\left[\Psi_0(\lambda e^{-\xi_u})\mathbf{1}_{\{\underline{\xi}_t>0\}}\right] {\rm d} u  & \leq C_{\lambda, \beta, x, \gamma} t^{3/2} \int_{s}^{t-s}  (t-u)^{-3/2}u^{-3/2} {\rm d} u \\ & \leq 2 C_{\lambda, \beta, x, \gamma} t^{3/2}   \left(\frac{t}{2}\right)^{-3/2} \int_{s}^{\infty} u^{-3/2} \mathrm{d} u \\ & \leq  2 C_{\lambda, \beta, x, \gamma} s^{-1/2}.
\end{split}\]
The result now follows by taking $t\to \infty$ and then $s\to \infty$.

	
\end{proof}

\begin{lemma}\label{lem_cotasv}
		Let $z, x >0$ and assume that \eqref{betacond} holds, then
\[\begin{split}
\lim\limits_{s \to \infty}	\lim\limits_{t\to \infty} & t^{3/2}e^{-t \Phi_{\xi}(\gamma)}\mathbb{E}_{(z,x)}\Bigg[\bigg|\exp\left\{-Z_s e^{-\xi_s}v_t(s,\lambda, \xi)\right\}\bigg. \Bigg. \\ &\Bigg.\bigg. \hspace{4cm}- \exp\left\{-Z_s e^{-\xi_s}v_t(t-s,\lambda, \xi)\right\}\bigg| \mathbf{1}_{\{\underline{\xi}_t>0\}}  \Bigg] =0.
\end{split}\]

\end{lemma}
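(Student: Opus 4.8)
The plan is to reduce this estimate directly to Lemma \ref{lem_cotapsi0} via the elementary Lipschitz bound $|e^{-a}-e^{-b}|\le|a-b|$ for $a,b\ge 0$, combined with a uniform a priori control of the backward flow $v_t(\cdot,\lambda,\xi)$ and the factorisation $\psi_0(\theta)=\theta\Psi_0(\theta)$. The first step is to observe that, since $\psi_0\ge 0$, the backward equation \eqref{eq_BDE} gives $\partial_u v_t(u,\lambda,\xi)=e^{\xi_u}\psi_0\big(v_t(u,\lambda,\xi)e^{-\xi_u}\big)\ge 0$, so $u\mapsto v_t(u,\lambda,\xi)$ is non-decreasing on $[0,t]$ with terminal value $\lambda$; hence $0\le v_t(u,\lambda,\xi)\le\lambda$ for every $u\in[0,t]$. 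Consequently, for $t\ge 2s$ we have $v_t(s,\lambda,\xi)\le v_t(t-s,\lambda,\xi)$, and integrating \eqref{eq_BDE}, using $\psi_0(\theta)=\theta\Psi_0(\theta)$ from \eqref{eq_phi0sub}, the monotonicity of $\Psi_0$ and the bound $v_t\le\lambda$,
\[
0\le v_t(t-s,\lambda,\xi)-v_t(s,\lambda,\xi)=\int_s^{t-s}v_t(u,\lambda,\xi)\,\Psi_0\big(v_t(u,\lambda,\xi)e^{-\xi_u}\big)\,\mathrm{d}u\le\lambda\int_s^{t-s}\Psi_0\big(\lambda e^{-\xi_u}\big)\,\mathrm{d}u.
\]

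Next I would apply $|e^{-a}-e^{-b}|\le|a-b|$ with $a=Z_se^{-\xi_s}v_t(s,\lambda,\xi)$ and $b=Z_se^{-\xi_s}v_t(t-s,\lambda,\xi)$, so that together with the previous display the integrand in the statement is bounded by $\lambda\,Z_se^{-\xi_s}\int_s^{t-s}\Psi_0(\lambda e^{-\xi_u})\,\mathrm{d}u$. Now the factor $\mathbf{1}_{\{\underline{\xi}_t>0\}}\int_s^{t-s}\Psi_0(\lambda e^{-\xi_u})\,\mathrm{d}u$ is measurable with respect to $\sigma(S)=\sigma(\xi)$, while the quenched martingale property \eqref{martingquenched} gives $\mathbb{E}_{(z,x)}[Z_se^{-\xi_s}\mid S]=z$; hence, conditioning on the environment and using Tonelli,
\[
t^{3/2}e^{-t\Phi_\xi(\gamma)}\,\mathbb{E}_{(z,x)}\!\left[\big|e^{-Z_se^{-\xi_s}v_t(s,\lambda,\xi)}-e^{-Z_se^{-\xi_s}v_t(t-s,\lambda,\xi)}\big|\mathbf{1}_{\{\underline{\xi}_t>0\}}\right]\le\lambda z\,t^{3/2}e^{-t\Phi_\xi(\gamma)}\!\int_s^{t-s}\mathbb{E}_x^{(e)}\!\left[\Psi_0\big(\lambda e^{-\xi_u}\big)\mathbf{1}_{\{\underline{\xi}_t>0\}}\right]\mathrm{d}u.
\]
Letting $t\to\infty$ and then $s\to\infty$, the right-hand side vanishes by Lemma \ref{lem_cotapsi0}, which is exactly the asserted limit.

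I do not expect a genuine obstacle here: the proof is essentially bookkeeping once Lemma \ref{lem_cotapsi0} is in hand. The only two points that require a little care are the uniform a priori bound $v_t(u,\lambda,\xi)\le\lambda$ (which rests only on $\psi_0\ge 0$ and the monotonicity it forces on $v_t$) and the measurability/Tonelli step that replaces $Z_se^{-\xi_s}$ by its quenched mean $z$. The genuinely delicate estimate — the decay of $t^{3/2}e^{-t\Phi_\xi(\gamma)}\int_s^{t-s}\mathbb{E}_x^{(e)}[\Psi_0(\lambda e^{-\xi_u})\mathbf{1}_{\{\underline{\xi}_t>0\}}]\,\mathrm{d}u$, obtained through the Esscher transform, the linear growth of $\widehat{U}^{(\gamma)}$, Hirano's local limit estimate and condition \eqref{betacond} — has already been carried out in Lemma \ref{lem_cotapsi0}, and here it is merely invoked.
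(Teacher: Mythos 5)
Your proof is correct and follows essentially the same route as the paper: Lipschitz bound on $e^{-y}$, monotonicity of $s\mapsto v_t(s,\lambda,\xi)$ together with $v_t\le\lambda$ and $\psi_0=\theta\Psi_0$ to dominate the difference by $\lambda\int_s^{t-s}\Psi_0(\lambda e^{-\xi_u})\,\mathrm{d}u$, the quenched martingale identity to remove the branching factor, and then Lemma \ref{lem_cotapsi0}. (Minor note: under $\mathbb{P}_{(z,x)}$ the quenched mean of $Z_se^{-\xi_s}$ is $ze^{-x}$ rather than $z$, but this is just a constant and does not affect the argument.)
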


\begin{proof}
Fix $z,x>0$ and take $t\geq 2s$. We begin by observing that since $f(y)=e^{-y}$, $y\geq 0$, it is  Lipschitz  and hence there exists a positive constant $C_1$ such that
\[\begin{split}
	\mathbb{E}_{(z,x)}&\Bigg[\bigg|\exp\left\{-Z_s e^{-\xi_s}v_t(s,\lambda, \xi)\right\}- \exp\left\{-Z_s e^{-\xi_s}v_t(t-s,\lambda, \xi)\right\}\bigg| \mathbf{1}_{\{\underline{\xi}_t>0\}}\Bigg]\\ & \hspace{1.5cm } \leq C_1	\mathbb{E}_{(z,x)}\left[Z_s e^{-\xi_s}\big|v_t(s,\lambda, \xi)-v_t(t-s,\lambda, \xi)\big| \mathbf{1}_{\{\underline{\xi}_t>0\}} \right] \\ &  \hspace{1.5cm} =  C_1 z^{-1}	\mathbb{E}_{x}^{(e)}\left[\big|v_t(s,\lambda, \xi)-v_t(t-s,\lambda, \xi)\big| \mathbf{1}_{\{\underline{\xi}_t>0\}}\right],
\end{split}\]
where in the last identity we have conditioned on the environment and used \eqref{martingquenched}. Since $\psi_0$ is positive,   from \eqref{eq_BDE} we have  that $s\mapsto v_t(s, \lambda, \xi)$ is an increasing function. This together with the facts that $\psi_0$ is a non-decreasing function and  $v_t(t, \lambda,\xi) = \lambda$, we deduce  $$ \psi_0\big(v_t(u, \lambda, \xi) e^{-\xi_u}\big)\leq \psi_0(\lambda e^{-\xi_u}), \qquad \text{for} \quad u\leq t.$$
Hence, we obtain 
\[\begin{split}
v_t(s,\lambda, \xi)-v_t(t-s,\lambda, \xi) &= \int_{s}^{t-s} e^{\xi_u} \psi_0\big(v_t(u, \lambda, \xi) e^{-\xi_u}\big)\mathrm{d} u\\ & \leq  \int_{s}^{t-s}  e^{\xi_u} \psi_0(\lambda e^{-\xi_u} ) \mathrm{d} u= \int_{s}^{t-s}  \lambda \Psi_0(\lambda e^{-\xi_u} )\mathrm{d} u.
\end{split}\]
In other words, we have deduced
\[
\mathbb{E}_{x}^{(e)} \left[\big|v_t(s,\lambda, \xi)-v_t(t-s,\lambda, \xi)\big| \mathbf{1}_{\{\underline{\xi}_t>0\}}  \right] \leq  \lambda \int_{s}^{t-s} \mathbb{E}_{x}^{(e)}\left[\Psi_0(\lambda e^{-\xi_u})\mathbf{1}_{\{\underline{\xi}_t>0\}}\right] \mathrm{d} u.
\]
Appealing to Lemma \ref{lem_cotapsi0}, we conclude that
\[\begin{split}
	\lim\limits_{s \to \infty}	\lim\limits_{t\to \infty} & t^{3/2}e^{-t \Phi_{\xi}(\gamma)}\mathbb{E}_{(z,x)}\Bigg[\bigg|\exp\left\{-Z_s e^{-\xi_s}v_t(s,\lambda, \xi)\right\}\bigg. \Bigg. \\ &\Bigg.\bigg. \hspace{4cm}- \exp\left\{-Z_s e^{-\xi_s}v_t(t-s,\lambda, \xi)\right\}\bigg|\mathbf{1}_{\{\underline{\xi}_t>0\}} \Bigg] \\ & \leq C_1 z^{-1} \lambda	\lim\limits_{s \to \infty}	\lim\limits_{t\to \infty}t^{3/2}e^{-t \Phi_{\xi}(\gamma)}  \int_s^{t-s}\mathbb{E}_{x}^{(e)}\left[\Psi_0(\lambda e^{-\xi_u})\mathbf{1}_{\{\underline{\xi}_t>0\}}\right] {\rm d} u = 0,
\end{split}\]
as required.
\end{proof}

The following lemma states that, with respect to the measure $\mathbb{P}_{(z,x)}^{(\gamma),\uparrow}$ with $z,x>0$, the reweighted  process $(Z_te^{-\xi_t}, t\geq 0)$ is a martingale that  converges towards a strictly positive r.v. under $\mathbb{P}_{(z,x)}^{(\gamma),\uparrow}$. This is another preparatory lemma for the proof of Theorem  \ref{prop_weakly_pareja} below.

\begin{lemma}\label{lem_mtg}
	Let $z,x >0$ and assume that  \eqref{betacond} holds. Then the  process $(Z_te^{-\xi_t},t\geq 0)$ is a martingale with respect to $(\mathcal{F}_t)_{t\geq 0}$ under $\mathbb{P}_{(z,x)}^{(\gamma),\uparrow}$. Moreover, as $t\to\infty$
	$$  Z_{t}e^{-\xi_{t}} \longrightarrow \mathcal{U}_\infty, \quad \quad \mathbb{P}_{(z,x)}^{(\gamma),\uparrow}-\text{a.s.},$$ 
	where the random variable $\mathcal{U}_\infty$ is finite and satisfies
	$$  \mathbb{P}_{(z,x)}^{(\gamma),\uparrow}\big(\mathcal{U}_\infty>0\big)>0.$$
\end{lemma}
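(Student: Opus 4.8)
The plan is to prove the three assertions in turn, following the blueprint of the discrete-time statement in Afanasyev et al.\ \cite{afanasyev2012} and of Lemma~\ref{teo_bansayemtg}.

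\emph{Martingale property and convergence.} Fix $0\le s\le t$ and $\Lambda\in\mathcal F_s$. Since the harmonic function $\widehat{U}^{(\gamma)}(\xi_t)\mathbf{1}_{\{\underline{\xi}_t>0\}}$ depends only on the environment, the conditional law of $Z$ given $S$ is the same under $\mathbb{P}^{(\gamma),\uparrow}_{(z,x)}$ as under $\mathbb{P}^{(\gamma)}_{(z,x)}$, so the quenched identity \eqref{martingquenched} still gives $\mathbb{E}^{(\gamma),\uparrow}_{(z,x)}[Z_te^{-\xi_t}\mid\mathcal F_s\vee\sigma(S)]=Z_se^{-\xi_s}$. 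Unfolding the Doob transform, conditioning first on $\mathcal F_s\vee\sigma(S)$ to peel off $Z_te^{-\xi_t}$, and then conditioning on $\mathcal F_s$ while using that $(\widehat{U}^{(\gamma)}(\xi_u)\mathbf{1}_{\{\underline{\xi}_u>0\}})_{u\ge0}$ is a martingale under $\mathbb{P}^{(e,\gamma)}_x$ together with the independence of the demographic and environmental components, one obtains $\mathbb{E}^{(\gamma),\uparrow}_{(z,x)}[Z_te^{-\xi_t}\mathbf{1}_\Lambda]=\mathbb{E}^{(\gamma),\uparrow}_{(z,x)}[Z_se^{-\xi_s}\mathbf{1}_\Lambda]$; along the way one checks $\mathbb{E}^{(\gamma),\uparrow}_{(z,x)}[Z_te^{-\xi_t}]=ze^{-x}<\infty$ for all $t$. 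Being a non-negative martingale, $(Z_te^{-\xi_t})_{t\ge0}$ converges $\mathbb{P}^{(\gamma),\uparrow}_{(z,x)}$-a.s.\ to a finite limit $\mathcal U_\infty\ge0$ by Doob's theorem.

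\emph{Strict positivity of the limit.} By bounded convergence and \eqref{eq_Laplace} at $s=0$ (so that $\mathbb{E}^{(\gamma),\uparrow}_{(z,x)}[e^{-\lambda\mathcal U_t}\mid S]=\exp\{-ze^{-x}v_t(0,\lambda,\xi)\}$), for every $\lambda>0$
\[
\mathbb{E}^{(\gamma),\uparrow}_{(z,x)}\big[e^{-\lambda\mathcal U_\infty}\big]=\lim_{t\to\infty}\mathbb{E}^{(e,\gamma),\uparrow}_x\Big[e^{-ze^{-x}v_t(0,\lambda,\xi)}\Big].
\]
The crux is a lower bound on $v_t(0,\lambda,\xi)$ that does not depend on $t$. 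Writing $\psi_0(\theta)=\theta\Psi_0(\theta)$ as in \eqref{eq_phi0sub}, equation \eqref{eq_BDE} reads $\partial_s\log v_t(s,\lambda,\xi)=\Psi_0(v_t(s,\lambda,\xi)e^{-\xi_s})$; integrating over $[0,t]$, using that $s\mapsto v_t(s,\lambda,\xi)$ is non-decreasing with terminal value $\lambda$ (hence $\le\lambda$), that $\Psi_0$ is non-decreasing, and that $\xi_s>0$ for all $s$ under $\mathbb{P}^{(e,\gamma),\uparrow}_x$,
\[
v_t(0,\lambda,\xi)=\lambda\exp\Big\{-\int_0^t\Psi_0\big(v_t(s,\lambda,\xi)e^{-\xi_s}\big)\,\mathrm{d}s\Big\}\ \ge\ \lambda\exp\Big\{-\int_0^\infty\Psi_0\big(\lambda e^{-\xi_s}\big)\,\mathrm{d}s\Big\}=:c_\lambda(\xi).
\]
It remains to see that $c_\lambda(\xi)>0$, $\mathbb{P}^{(e,\gamma),\uparrow}_x$-a.s., i.e.\ that $\int_0^\infty\Psi_0(\lambda e^{-\xi_s})\,\mathrm{d}s<\infty$ a.s. Under $\mathbb{P}^{(e,\gamma),\uparrow}_x$ the process $\xi$ stays positive and, since it oscillates under $\mathbb{P}^{(e,\gamma)}_x$ (as $\Phi_{\xi}'(\gamma)=0$), drifts to $+\infty$; choosing $c$ so large that Potter's bounds \cite{bingham1989regular} give $\Psi_0(\theta)\le\theta^{\beta-\varepsilon}$ for $0<\theta\le\lambda e^{-c}$ (available because \eqref{betacond} makes $\Psi_0$ regularly varying at $0$ with index $\beta>0$), and splitting the time integral at the level $\{\xi_s<c\}$,
\[
\int_0^\infty\Psi_0\big(\lambda e^{-\xi_s}\big)\,\mathrm{d}s\ \le\ \Psi_0(\lambda)\,\big|\{s\ge0:\xi_s<c\}\big|\ +\ \lambda^{\beta-\varepsilon}\int_0^\infty e^{-(\beta-\varepsilon)\xi_s}\,\mathrm{d}s<\infty\quad\text{a.s.},
\]
the first term being finite because $\xi_s\to+\infty$, the second because the exponential functional of a L\'evy process conditioned to stay positive is a.s.\ finite (the same exponential functionals occur in the stable case, cf.\ \eqref{functG}).

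Consequently $\mathbb{E}^{(\gamma),\uparrow}_{(z,x)}[e^{-\lambda\mathcal U_\infty}]\le\mathbb{E}^{(e,\gamma),\uparrow}_x[e^{-ze^{-x}c_\lambda(\xi)}]<1$ for every fixed $\lambda>0$, since $c_\lambda(\xi)>0$ a.s.\ and $z,x\in(0,\infty)$. As $\lambda\mapsto\mathbb{E}^{(\gamma),\uparrow}_{(z,x)}[e^{-\lambda\mathcal U_\infty}]$ is non-increasing,
\[
\mathbb{P}^{(\gamma),\uparrow}_{(z,x)}\big(\mathcal U_\infty=0\big)=\lim_{\lambda\to\infty}\mathbb{E}^{(\gamma),\uparrow}_{(z,x)}\big[e^{-\lambda\mathcal U_\infty}\big]\le\mathbb{E}^{(\gamma),\uparrow}_{(z,x)}\big[e^{-\mathcal U_\infty}\big]<1,
\]
whence $\mathbb{P}^{(\gamma),\uparrow}_{(z,x)}(\mathcal U_\infty>0)>0$. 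The main obstacle is precisely this positivity step, and within it the $t$-uniform lower bound for $v_t(0,\lambda,\xi)$: because the integrand in the representation of $v_t$ depends on $t$ through $v_t(\cdot,\lambda,\xi)$, one is forced to dominate it by the $t$-free quantity $\Psi_0(\lambda e^{-\xi_s})$ and then to control $\int_0^\infty\Psi_0(\lambda e^{-\xi_s})\,\mathrm{d}s$, which is exactly where assumption \eqref{betacond} (regular variation with index $\beta>0$, and not merely \eqref{eq_xlog2x}) together with the transience and growth rate of $\xi$ conditioned to stay positive are essential.
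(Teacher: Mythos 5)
Your proof of the martingale property is essentially the paper's: you unfold the Doob transform, use that the $h$-function is environment-measurable so the quenched martingale identity survives tilting, and then use that $(\widehat{U}^{(\gamma)}(\xi_u)\mathbf{1}_{\{\underline{\xi}_u>0\}})_{u\ge0}$ is a martingale under $\mathbb{P}^{(\gamma)}_{(z,x)}$. Doob's theorem then gives the a.s.\ limit $\mathcal{U}_\infty$ exactly as in the paper.

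For the positivity of $\mathcal{U}_\infty$, however, you take a genuinely different route. The paper argues by dominated convergence that $\mathbb{P}^{(\gamma),\uparrow}_{(z,x)}(\mathcal{U}_\infty>0)=\lim_t\mathbb{P}^{(\gamma),\uparrow}_{(z,x)}(Z_t>0)$, observes that under $\mathbb{P}^{(\gamma)}$ the environment oscillates, and then invokes Lemma~\ref{lem_JCProba} (Proposition 3.4 of \cite{bansaye2021extinction}) as a black box to conclude that this limit is positive. You instead work directly at the level of the Laplace transform: via \eqref{eq_Laplace} and Proposition 3.3 of \cite{bansaye2021extinction} you write $\mathbb{E}^{(\gamma),\uparrow}_{(z,x)}[e^{-\lambda\mathcal{U}_t}]=\mathbb{E}^{(e,\gamma),\uparrow}_x[e^{-ze^{-x}v_t(0,\lambda,\xi)}]$, derive the $t$-uniform lower bound $v_t(0,\lambda,\xi)\ge\lambda\exp\{-\int_0^\infty\Psi_0(\lambda e^{-\xi_s})\,\mathrm{d}s\}$, and reduce everything to the a.s.\ finiteness of $\int_0^\infty\Psi_0(\lambda e^{-\xi_s})\,\mathrm{d}s$ under $\mathbb{P}^{(e,\gamma),\uparrow}_x$, which you establish via Potter's bounds and the transience of the conditioned process. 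This is essentially a re-derivation of the relevant content of Proposition 3.4 of \cite{bansaye2021extinction}; indeed, the paper invokes exactly the bound $v_\infty(0,\lambda,\xi)\ge\lambda\exp\{-\int_0^\infty\Psi_0(\lambda e^{-\xi_u})\,\mathrm{d}u\}$ and the finiteness $\mathbb{E}^{(e,\gamma),\uparrow}_y[\int_0^\infty\Psi_0(\lambda e^{-\xi_u})\,\mathrm{d}u]<\infty$ later, in the proof of Theorem~\ref{prop_weakly_pareja}. Your route is more self-contained and makes the role of \eqref{betacond} (regular variation of index $\beta>0$) transparent, at the cost of re-proving a cited result; the paper's is shorter but depends on that result being available. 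Two details in your finiteness argument are stated without proof and deserve a word: that the occupation time $|\{s:\xi_s<c\}|$ is a.s.\ finite, and that the exponential functional $\int_0^\infty e^{-(\beta-\varepsilon)\xi_s}\,\mathrm{d}s$ is a.s.\ finite, under the conditioned law — both hold (the conditioned process is transient to $+\infty$ with at least $\sqrt{t}$-growth since $\Phi_\xi''(\gamma)<\infty$), and in fact \cite{bansaye2021extinction} proves the stronger statement with finite expectation under the $x\log^2 x$ moment condition, but a careful write-up should cite or justify these facts rather than appeal to the stable case in \eqref{functG}, where they are assumed rather than proved.
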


In order to prove the above result, we require the following Lemma which is Proposition 3.4 in Bansaye et al. \cite{bansaye2021extinction}.

\begin{lemma}[Proposition 3.4 in \cite{bansaye2021extinction}] \label{lem_JCProba} Let $z,x>0$  and assume that the environment $\xi$ is critical under $ \mathbb{P}_{(z,x)}$  and that  \eqref{eq_xlog2x} is fulfilled,  then 
	$$ \lim\limits_{t\to \infty} \mathbb{P}^{\uparrow}_{(z,x)}\big(Z_t>0\big) >0.$$		
\end{lemma}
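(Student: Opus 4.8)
Before seeing the authors' argument, here is how I would attack this.

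The first step is to integrate out the demographic coordinate. Since the weight $\widehat{U}(\xi_t)\mathbf{1}_{\{\underline{\xi}_t>0\}}/\widehat{U}(x)$ defining the Doob $h$-transform in Lemma~\ref{teo_bansayemtg} is $\mathcal{F}^{(e)}_t$-measurable, under $\mathbb{P}^{\uparrow}_{(z,x)}$ the environment $\xi$ has law $\mathbb{P}^{(e),\uparrow}_{x}$ while the demographic inputs keep their laws and stay independent of $\xi$. Coupling this with the quenched extinction identity read off from \eqref{eq_Laplace} (equivalently \eqref{cotasup}), $\mathbb{P}_{(z,x)}(Z_t=0\mid S)=\exp\{-ze^{-x}v_t(0,\infty,\xi)\}$ with $v_t(0,\infty,\xi):=\lim_{\lambda\to\infty}v_t(0,\lambda,\xi)$ (which is $\mathbb{P}^{(e)}$-a.s.\ finite and, by \eqref{eq_xlog2x}, $\mathbb{P}^{(e)}$-a.s.\ strictly positive for every $t>0$), I would first obtain
\[
\mathbb{P}^{\uparrow}_{(z,x)}\big(Z_t>0\big)=\mathbb{E}^{(e),\uparrow}_{x}\Big[1-\exp\big\{-ze^{-x}v_t(0,\infty,\xi)\big\}\Big].
\]
The flow identity $v_{t'}(0,\infty,\xi)=v_t\big(0,v_{t'}(t,\infty,\xi),\xi\big)$ for $t\le t'$, together with the monotonicity of $\lambda\mapsto v_t(0,\lambda,\xi)$, makes $t\mapsto v_t(0,\infty,\xi)$ non-increasing, so it decreases to a limit $v_\infty(0,\infty,\xi)\in[0,\infty)$, and monotone convergence gives $\lim_{t}\mathbb{P}^{\uparrow}_{(z,x)}(Z_t>0)=\mathbb{E}^{(e),\uparrow}_{x}[1-\exp\{-ze^{-x}v_\infty(0,\infty,\xi)\}]$. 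Hence the statement reduces to showing $v_\infty(0,\infty,\xi)>0$ with positive $\mathbb{P}^{(e),\uparrow}_{x}$-probability. I would also note, using the Markov property of $\mathbb{P}^{(e),\uparrow}_{x}$, the scaling $v_\infty(r,\infty,\xi)=e^{\xi_r}v_\infty(r,\infty,\xi-\xi_r)$, and the fact that $\xi_r\to+\infty$ so that $\mathbb{P}^{(e),\uparrow}_{y}$ asymptotically loses its conditioning as $y\to\infty$, that L\'evy's $0$--$1$ law forces this probability to be $0$ or $1$; so it would suffice to exhibit \emph{any} lower bound $\mathbb{P}^{\uparrow}_{(z,x)}(Z_t>0)\ge c>0$ uniform in $t$.

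\emph{The finite-variance case.} When $\psi_0^{\prime\prime}(0+)=2\varrho^2+\int_{(0,\infty)}x^2\mu(\mathrm{d}x)<\infty$, applying It\^o's formula to $Z_te^{-\xi_t}$ shows that, conditionally on the environment, this process is a martingale with quenched variance $\operatorname{Var}\!\big(Z_te^{-\xi_t}\mid S\big)=ze^{-x}\,\psi_0^{\prime\prime}(0+)\int_0^t e^{-\xi_s}\mathrm{d}s$. Conditioning on $S$ and invoking the Paley--Zygmund inequality then yields
\[
\mathbb{P}_{(z,x)}\big(Z_t>0\,\big|\,S\big)\ \ge\ \frac{\big(\mathbb{E}[Z_te^{-\xi_t}\mid S]\big)^2}{\mathbb{E}\big[(Z_te^{-\xi_t})^2\mid S\big]}\ =\ \frac{ze^{-x}}{ze^{-x}+\psi_0^{\prime\prime}(0+)\displaystyle\int_0^t e^{-\xi_s}\mathrm{d}s}\,.
\]
Taking $\mathbb{E}^{(e),\uparrow}_{x}$ and letting $t\to\infty$, the decisive input is that under $\mathbb{P}^{(e),\uparrow}_{x}$ the process $\xi$ drifts to $+\infty$, so $\int_0^t e^{-\xi_s}\mathrm{d}s$ increases to the $\mathbb{P}^{(e),\uparrow}_{x}$-a.s.\ finite exponential functional $\texttt{I}_{0,\infty}(\xi)$; monotone convergence then gives $\lim_{t}\mathbb{P}^{\uparrow}_{(z,x)}(Z_t>0)\ge\mathbb{E}^{(e),\uparrow}_{x}\big[ze^{-x}\big(ze^{-x}+\psi_0^{\prime\prime}(0+)\texttt{I}_{0,\infty}(\xi)\big)^{-1}\big]>0$, which proves the statement in this regime (and, in passing, that $v_\infty(0,\infty,\xi)>0$ a.s.\ here).

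\emph{The general case and the main obstacle.} For infinite branching variance the second-moment computation fails, and — this is the delicate point — one cannot reduce to the previous case by truncating the branching jumps, because deleting large jumps makes the quenched extinction probability move in the \emph{wrong} direction (a smaller $\psi_0$ yields a larger $v_t(0,\infty,\xi)$, i.e.\ better survival). This is precisely the situation handled in the discrete setting by Afanasyev et al.~\cite{afanasyev2012}, and I would follow that route: run a Paley--Zygmund argument on a version of the process in which the branching jumps are truncated at a level tuned to the environment, and control, uniformly in $t$, both the normalised mass discarded by the truncation and the truncated quenched second moment. The moment assumption \eqref{eq_xlog2x} is exactly the threshold that makes this balance work — it plays, against the conditioned environment (which grows like $\sqrt{t}$, a fact carried by the renewal functions $U,\widehat{U}$ and the asymptotics around \eqref{eq_weakly_cota_hirano}), the same role as an $x\log^2 x$ moment in the random-walk case. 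I expect making this level-dependent truncation and the accompanying fluctuation estimates rigorous in continuous time to be the real work of the proof; everything preceding it is soft.
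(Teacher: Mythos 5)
There is a genuine gap: your argument proves the lemma only in the finite branching variance case $\psi_0''(0+)=2\varrho^2+\int_{(0,\infty)}x^2\mu(\mathrm{d}x)<\infty$, whereas the hypothesis of the lemma is only the $x\log^2x$ condition \eqref{eq_xlog2x}, which allows $\int x^2\mu(\mathrm{d}x)=\infty$ (e.g.\ stable-like $\mu$) — precisely the regime the lemma is needed for. For that general case you explicitly defer the "real work" (an environment-tuned truncation of the branching jumps plus uniform control of the discarded mass and of the truncated quenched second moment), so under the stated hypothesis the statement is not actually proved. A secondary soft spot: even in the finite-variance case, the a.s.\ finiteness of $\int_0^\infty e^{-\xi_s}\mathrm{d}s$ under $\mathbb{P}^{(e),\uparrow}_x$ does not follow merely from $\xi_s\to+\infty$; it needs a quantitative estimate such as the integrable decay of $\mathbb{E}^{(e),\uparrow}_x[e^{-\xi_s}]$, which is exactly the kind of first-moment bound the cited proof establishes.

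It is also worth noting that the proof this paper relies on (Proposition 3.4 of Bansaye et al., whose key inequalities are reproduced in the proof of Theorem \ref{prop_weakly_pareja}) takes a different and second-moment-free route. After the same reduction to positivity of $v_\infty(0,\lambda,\xi)$ under the conditioned environment, one compares the backward equation \eqref{eq_BDE}, written as $\partial_s v_t(s,\lambda,\xi)=v_t(s,\lambda,\xi)\,\Psi_0\big(v_t(s,\lambda,\xi)e^{-\xi_s}\big)\le v_t(s,\lambda,\xi)\,\Psi_0(\lambda e^{-\xi_s})$, to obtain the pathwise bound
\[
v_t(0,\lambda,\xi)\ \ge\ \lambda\exp\Big\{-\int_0^{t}\Psi_0(\lambda e^{-\xi_u})\,\mathrm{d}u\Big\},
\]
and then shows $\mathbb{E}^{(e),\uparrow}\big[\int_0^\infty\Psi_0(\lambda e^{-\xi_u})\,\mathrm{d}u\big]<\infty$, where condition \eqref{eq_xlog2x} enters through the behaviour of $\Psi_0$ at $0$ balanced against the $\sqrt{t}$-growth of the conditioned environment. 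This handles arbitrary branching mechanisms satisfying \eqref{eq_xlog2x} with only a first-moment estimate on a functional of the environment, avoiding both Paley--Zygmund and any truncation of the branching jumps; your finite-variance computation is a correct but strictly weaker special case, and your plan for the infinite-variance case would still have to overcome the difficulty you yourself identify (truncation moves the quenched survival probability the wrong way), which is precisely what the ODE comparison sidesteps.
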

We recall that \eqref{betacond} implies the $x\log^2(x)$-moment condition \eqref{eq_xlog2x}. 
\begin{proof} [Proof of Lemma \ref{lem_mtg}]
	From Proposition 1.1 in \cite{bansaye2021extinction}, which we may apply here with respect to the measure $\mathbb{P}_{(z,x)}^{(\gamma)}$, we have that the process $(Z_te^{-\xi_t},t\geq 0)$ is a quenched martingale with respect to the environment.
	We assume that $s\leq t$ and take $A\in \mathcal{F}_s$. In order to deduce the first claim of this lemma, we first show
	\[\begin{split}
		\mathbb{E}_{(z,x)}^{(\gamma)}\Big[Z_te^{-\xi_t}\mathbf{1}_A\widehat{U}^{(\gamma)}(\xi_t)\mathbf{1}_{\{\underline{\xi}_t>0\}}\Big] &= 		\mathbb{E}_{(z,x)}^{(\gamma)}\Big[Z_se^{-\xi_s}\mathbf{1}_A\widehat{U}^{(\gamma)}(\xi_t)\mathbf{1}_{\{\underline{\xi}_t>0\}}\Big].
	\end{split}\]
	First, conditioning on the environment, we deduce that 
	\[\begin{split}
		\mathbb{E}_{(z,x)}^{(\gamma)}\Big[Z_te^{-\xi_t}\mathbf{1}_A\widehat{U}^{(\gamma)}(\xi_t)\mathbf{1}_{\{\underline{\xi}_t>0\}}\Big] &= \mathbb{E}_{(z,x)}^{(\gamma)}\Big[\mathbb{E}_{(z,x)}^{(\gamma)}[Z_te^{-\xi_t}\mathbf{1}_A| \xi]\widehat{U}^{(\gamma)}(\xi_t)\mathbf{1}_{\{\underline{\xi}_t>0\}}\Big]\\ &= \mathbb{E}_{(z,x)}^{(\gamma)}\Big[\mathbb{E}_{(z,x)}^{(\gamma)}[Z_se^{-\xi_s}\mathbf{1}_A|  \xi]\widehat{U}^{(\gamma)}(\xi_t)\mathbf{1}_{\{\underline{\xi}_t>0\}}\Big].
	\end{split}\]
	We can see that the random variable $\mathbb{E}_{(z,x)}^{(\gamma)}[Z_se^{-\xi_s}\mathbf{1}_A|  \xi]$ is $\mathcal{F}_s$-measurable. Thus conditioning on $\mathcal{F}_s$, we have 
	\[\begin{split}
		\mathbb{E}_{(z,x)}^{(\gamma)}\Big[Z_te^{-\xi_t}\mathbf{1}_A\widehat{U}^{(\gamma)}(\xi_t)\mathbf{1}_{\{\underline{\xi}_t>0\}}\Big] &= \mathbb{E}_{(z,x)}^{(\gamma)}\Big[\mathbb{E}_{(z,x)}^{(\gamma)}[Z_se^{-\xi_s}\mathbf{1}_A |  \xi] \mathbb{E}_{(z,x)}^{(\gamma)}[\widehat{U}^{(\gamma)}(\xi_t)\mathbf{1}_{\{\underline{\xi}_t>0\}}| \mathcal{F}_s]\Big].
	\end{split}\]
	Further, by Lemma 3.1 in \cite{bansaye2021extinction}, which we can apply here under the measure $\mathbb{P}_{(z,x)}^{(\gamma)}$,
	the process $(\widehat{U}^{(\gamma)}(\xi_t)\mathbf{1}_{\{\underline{\xi}_t>0\}}, t\geq 0)$ is a martingale with respect to $(\mathcal{F}_t)_{t\geq 0}$ under $\mathbb{P}_{(z,x)}^{(\gamma)}$. Hence
	\[\begin{split}
		\mathbb{E}_{(z,x)}^{(\gamma)}\Big[Z_te^{-\xi_t}\mathbf{1}_A\widehat{U}^{(\gamma)}(\xi_t)\mathbf{1}_{\{\underline{\xi}_t>0\}}\Big] &=  \mathbb{E}_{(z,x)}^{(\gamma)}\Big[\mathbb{E}_{(z,x)}^{(\gamma)}[Z_se^{-\xi_s}\mathbf{1}_A |  \xi] \widehat{U}^{(\gamma)}(\xi_s)\mathbf{1}_{\{\underline{\xi}_s>0\}}\Big] \\ &= \mathbb{E}_{(z,x)}^{(\gamma)}\Big[Z_se^{-\xi_s}\mathbf{1}_A \widehat{U}^{(\gamma)}(\xi_s)\mathbf{1}_{\{\underline{\xi}_s>0\}}\Big]. 
	\end{split}\]
	Therefore, by definition of the measure $\mathbb{P}_{(z,x)}^{(\gamma),\uparrow}$ we see 
	\[\begin{split}
		\mathbb{E}_{(z,x)}^{(\gamma),\uparrow}\big[Z_te^{-\xi_t}\mathbf{1}_A\big] &= \frac{1}{\widehat{U}(x)} \mathbb{E}_{(z,x)}^{(\gamma)}\Big[Z_te^{-\xi_t}\mathbf{1}_A\widehat{U}^{(\gamma)}(\xi_t)\mathbf{1}_{\{\underline{\xi}_t>0\}}\Big] \\& = \frac{1}{\widehat{U}(x)} \mathbb{E}_{(z,x)}^{(\gamma)}\Big[Z_se^{-\xi_s}\mathbf{1}_A \widehat{U}^{(\gamma)}(\xi_s)\mathbf{1}_{\{\underline{\xi}_s>0\}}\Big] = \mathbb{E}_{(z,x)}^{(\gamma),\uparrow}\big[Z_se^{-\xi_s}\mathbf{1}_A\big],
	\end{split}\]
	which allows us to conclude that the process $(Z_te^{-\xi_t},t\geq 0)$ is a martingale with respect to $(\mathcal{F}_t)_{t\geq 0}$ under $\mathbb{P}_{(z,x)}^{(\gamma),\uparrow}$. Moreover, by Doob's convergence theorem, there is a non-negative finite r.v. $\mathcal{U}_\infty$ such that as $t\to \infty$
	$$  Z_{t}e^{-\xi_{t}} \longrightarrow \mathcal{U}_\infty, \quad \quad \mathbb{P}_{(z,x)}^{(\gamma),\uparrow}-\text{a.s.}$$ 
	Next, by Dominated Convergence Theorem we have
	\[\mathbb{P}_{(z,x)}^{(\gamma),\uparrow}\big(\mathcal{U}_\infty>0\big) = \lim\limits_{t\to\infty}\mathbb{P}_{(z,x)}^{(\gamma),\uparrow}\big(Z_te^{-\xi_t}>0\big).\]
	The proof is thus completed as soon as we can show 
	\begin{equation}\label{eq_pgamma}
		\lim\limits_{t\to\infty}\mathbb{P}_{(z,x)}^{(\gamma),\uparrow}\big(Z_te^{-\xi_t}>0\big)>0.
	\end{equation}
	In order to do so, we first observe that the following identity holds
	\[\mathbb{P}_{(z,x)}^{(\gamma),\uparrow}\big(Z_te^{-\xi_t}=0\big) 
	=\mathbb{P}_{(z,x)}^{(\gamma),\uparrow}\big(Z_t=0\big),\]
	then  by noting that under $\mathbb{P}^{(\gamma)}_{(z,x)}$ the L\'evy process $\xi$ oscillates (since $\Phi_{\xi}'(\gamma)=0$), we can apply Lemma \ref{lem_JCProba}  to deduce \eqref{eq_pgamma}. 
	
\end{proof}

With Proposition \ref{prop_weakly_primera} and Lemmas  \ref{lem_cotasv} and \ref{lem_mtg}  in hand, we may now proceed to prove  Theorem \ref{prop_weakly_pareja} following similar ideas as those used in Lemma 3.4 in \cite{afanasyev2012} although we might  consider that the  continuous setting leads to significant changes since an extension of Proposition \ref{prop_weakly_primera} seems difficult to be deduced unlike in the discrete case (see Theorem 2.7 in \cite{afanasyev2012}). Indeed, it seems that such extension  will depend on a much deeper analysis on the asymptotic behaviour for bridges of L\'evy processes and their conditioned version.

\begin{proof}[Proof of Theorem \ref{prop_weakly_pareja}]
	Fix $x,z>0$ and recall that the process $(\mathcal{U}_s, s\geq 0)$ is defined as $	\mathcal{U}_{s} := Z_{s}e^{-\xi_{s}}$. For any $\lambda\geq 0$, we shall prove the convergence of  the following Laplace transform as $t\to\infty$, $$\mathbb{E}_{(z,x)}\left[\exp\{-\lambda Z_te^{-\xi_t}\}\ \big|\big. \  \underline{\xi}_t >0\right].$$  
	First  we rewrite the latter expression in a form which allows to use  Proposition \ref{prop_weakly_primera} and Lemma \ref{lem_cotasv}. We begin by recalling from \eqref{eq_Laplace}, that for any $\lambda\geq 0$ and $t\geq s\geq 0$  we have
	\begin{equation*}\label{eq_quenchedlaw}
		\mathbb{E}_{(z,x)}\left[\exp\{-\lambda Z_te^{-\xi_t}\}\ \big|\big.\  \xi, \mathcal{F}^{(b)}_s\right] = \exp\big\{- Z_se^{-\xi_s}v_t(s,\lambda,\xi)\big\}.
	\end{equation*} 
Thus
\[\begin{split}
	\mathbb{E}_{(z,x)}&\left[\exp\{-\lambda Z_te^{-\xi_t}\}\mathbf{1}_{\{\underline{\xi}_t>0\}}\right]= \mathbb{E}_{(z,x)}\Big[\mathbb{E}_{(z,x)}\left[\exp\{-\lambda Z_te^{-\xi_t}\}\ \big|\big.\  \xi, \mathcal{F}^{(b)}_s\right] \mathbf{1}_{\{\underline{\xi}_t>0\}}\Big]\\ &\hspace{0.3cm} = \mathbb{E}_{(z,x)} \Big[ \exp\big\{- Z_se^{-\xi_s}v_t(s,\lambda,\xi)\big\}\mathbf{1}_{\{\underline{\xi}_t>0\}}\Big]\\ &\hspace{0.3cm}  =  \mathbb{E}_{(z,x)} \Big[ \exp\big\{- Z_se^{-\xi_s}v_t(t-s,\lambda,\xi)\big\}\mathbf{1}_{\{\underline{\xi}_t>0\}}\Big] \\ &\hspace{1cm}+  \mathbb{E}_{(z,x)} \Big[\Big( \exp\big\{- Z_se^{-\xi_s}v_t(s,\lambda,\xi)\big\}- \exp\big\{- Z_se^{-\xi_s}v_t(t-s,\lambda,\xi)\big\}\Big)\mathbf{1}_{\{\underline{\xi}_t>0\}}\Big].
\end{split}\]
	Now, using the same notation as in Proposition \ref{prop_weakly_primera}, we note that for any $s\leq t$, 
	\begin{equation}\label{eq_vaphi}
	  \exp\left\{-{Z_{s}e^{-\xi_{s}}} v_{t}(t-s,\lambda,\xi)\right\} = \varphi\big(\mathcal{U}_{s} ,\widetilde{W}_{t-s,t}, \xi_t\big),
	\end{equation}
	where  $(\widehat W_s(\lambda), s\geq 0)$ and $(\widetilde{W}_{t-s,t}, s\leq t)$ are defined by
	\[\begin{split}
	\widehat{W}_{s}(\lambda):= \exp\left\{-v_s(0,\lambda,\widehat{\xi})\right\},	\quad \quad  \quad \widetilde{W}_{t-s, t}:=\exp\left\{-v_t(t-s,\lambda,\xi)\right\},
	\end{split}\]
and $\varphi$  is the following bounded and continuous function 
		$$\varphi(\textbf{u},w,y) := w^{\textbf{u}}, \qquad  0\leq w\leq 1, \quad \textbf{u}\geq 0, \quad y\in \mathbb{R}.$$ 
	Hence, appealing to Proposition \ref{prop_weakly_primera}, Lemma \ref{lem_cotasv} and 
	\eqref{eq_weakly_cota_hirano},  for $z,x>0$, we see
	\[\begin{split}
		\lim\limits_{t\to \infty}& \mathbb{E}_{(z,x)}\left[\exp\{-\lambda Z_t e^{-\xi_t}\}\ \Big|\Big. \  \underline{\xi}_t >0\right] = \lim\limits_{s\to \infty} \lim\limits_{t\to \infty}	\mathbb{E}_{(z,x)}\left[\varphi(\mathcal{U}_{s},\widetilde{W}_{t-s,t}, \xi_t)\ \Big|\Big.\   \underline{\xi}_t > 0\right]  \\ & \hspace{0.1cm} +   \lim\limits_{s\to \infty}\lim\limits_{t\to \infty}\mathbb{E}_{(z,x)}\left[\Big|\exp\left\{-Z_s e^{-\xi_s}v_t(s,\lambda, \xi)\right\} - \exp\left\{-Z_s e^{-\xi_s}v_t(t-s,\lambda, \xi)\right\}\Big|\, \bigg|\Big. \ \underline{\xi}_t>0  \right]  \\&\hspace{0.3cm}= 	\lim\limits_{s\to \infty}\lim\limits_{t\to \infty}\frac{	\mathbb{E}_{(z,x)}^{(\gamma)}\left[\varphi(\mathcal{U}_{s},\widetilde{W}_{t-s,t}, \xi_t)e^{-\gamma \xi_t}\mathbf{1}_{\{\underline{\xi}_t>0\}}\right] }{\mathbb{E}^{(e, \gamma)}_x\left[e^{-\gamma \xi_t}\mathbf{1}_{\{\underline{\xi}_t>0\}}\right]} = 	\lim\limits_{s\to \infty}\Upsilon_{z,x}(\lambda,s),
	\end{split}\]
	where
	\begin{equation*}
		\Upsilon_{z,x}(\lambda,s):=	 \iiint \varphi(\textbf{u},w,y)  \mathbb{P}^{(\gamma),\uparrow}_{(z,x)}\big(\mathcal{U}_s \in \mathrm{d} \textbf{u}\big) \mathbb{P}^{(e,\gamma),\downarrow}_{-y}\big(W_s(\lambda) \in \mathrm{d} w\big)\mu_\gamma(\mathrm{d}  y).
	\end{equation*}
	On the other hand, from Lemma \ref{lem_mtg},  we recall that, under  $\mathbb{P}_{(z,x)}^{(\gamma),\uparrow}$, the process $(\mathcal{U}_s, s\geq 0)$ is a non-negative martingale with respect to $(\mathcal{F}_t)_{t\geq 0}$ that converges towards  the non-negative and finite r.v.  $\mathcal{U}_\infty$. Next,  we observe from Proposition 2.3 in \cite{he2018continuous}  that the mapping $s\mapsto v_s(0,\lambda,\widehat \xi)$ is decreasing implying that $s\mapsto \widehat W_s(\lambda)$ is increasing $\mathbb{P}_{-y}^{(e,\gamma),\downarrow}$-a.s., for $y>0$. Further, since $v_{s}(0,\lambda, \widehat \xi)\leq \lambda$,  the process  $(\widehat W_{s}(\lambda), s\geq 0)$ is bounded below, i.e. for any $\lambda\geq 0$,
$$ 0 < e^{-\lambda}\leq \widehat W_{s}(\lambda) \leq 1.$$
Therefore it follows that, for any $\lambda\geq 0$ and $y>0$,
\begin{equation}\label{eq_convW}
	\widehat W_{s}(\lambda) \xrightarrow[s\to\infty]{} \widehat W_\infty(\lambda), \quad \quad \mathbb{P}_{-y}^{(e,\gamma),\downarrow}-\text{a.s.},
\end{equation}
where  $\widehat W_\infty(\lambda)$ is a strictly positive r.v.
	The above observations together with the dominated convergence theorem  imply that  
	$$ 	\lim\limits_{s\to \infty}\Upsilon_{z,x}(\lambda,s) = \iiint \varphi(\textbf{u},w,y)  \mathbb{P}^{(\gamma),\uparrow}_{(z,x)}\big(\mathcal{U}_\infty \in \mathrm{d} \textbf{u}\big) \mathbb{P}^{(e,\gamma),\downarrow}_{-y}\big(\widehat W_\infty(\lambda) \in \mathrm{d} w\big)\mu_\gamma(\mathrm{d}  y):= \Upsilon_{z,x}(\lambda).$$
In other words  $ \mathcal{U}_t=Z_t e^{-\xi_t}$ converges weakly, under $\mathbb{P}_{(z,x)}\big(\cdot \ | \ \underline{\xi}_t>0 \big)$,  towards some positive and finite r.v. that we denote by $Q$ and whose Laplace transform is given by $\Upsilon_{z,x}$.

 Next, we observe that  the probability of the event $\{Q>0\}$ is strictly positive. The latter is equivalent to show that  $\Upsilon_{z,x}(\lambda)<1$ for all $\lambda>0$. In other words, from the  definition of $\varphi(\textbf{u},w,y)$, it is enough to show
	$$ \mathbb{P}_{(z,x)}^{(\gamma),\uparrow}\big(\mathcal{U}_\infty>0\big)>0 \quad \quad  \text{and}\quad \quad  \mathbb{P}_{-y}^{(e,\gamma),\downarrow}\big(\widehat W_\infty(\lambda)<1\big)=1, \quad \text{for all}\quad \lambda>0.$$ 
	The first claim has been proved in Lemma \ref{lem_mtg}. For the second claim, we observe that for any $\lambda>0$, 
\[ \mathbb{P}_{-y}^{(e,\gamma),\downarrow}\big(\widehat W_\infty(\lambda)<1\big) = \mathbb{P}_{-y}^{(e,\gamma), \downarrow}\big(v_\infty(0, \lambda, \widehat{\xi})>0\big). \]
By the proof of Proposition 3.4. in \cite{bansaye2021extinction}, we have 
\[v_\infty(0, \lambda, \xi) \geq \lambda \exp\left\{-\int_{0}^{\infty} \Psi_0(\lambda e^{-\xi_u}){\rm d} u\right\},\]
and moreover, from the same reference and under assumption \eqref{betacond}, it follows 
\[\mathbb{E}^{(e, \gamma),  \uparrow}_y \left[\int_{0}^{\infty} \Psi_0(\lambda e^{-\xi_u}){\rm d} u\right] < \infty,\]
which implies that 
$$\mathbb{P}_{-y}^{(e,\gamma), \downarrow }\big(v_\infty(0, \lambda, \widehat \xi)>0\big)=1, \quad \text{for all} \quad \lambda \geq 0.$$
In other words, the probability of the event $\{Q>0\}$ is strictly positive, which implies
	$$\lim\limits_{t\to \infty}\mathbb{P}_{(z,x)}\Big(Z_t e^{-\xi_t}>0 \ \Big|\Big. \ \underline{\xi}_t>0\Big)>0.$$		
	This completes the proof. 
\end{proof}

\subsection{Proof of Theorem \ref{teo_debil}}

The proof of this theorem follows a similar strategy as the proof of  Theorem 1.2 in  Bansaye et al. \cite{bansaye2021extinction} for the critical regime where assumption that $\ell(\lambda)>C$, for $C>0$, and the asymptotic behaviour of exponential functionals of L\'evy processes are crucial. We also recall that   $Z$ is in the weakly subcritical regime.

For simplicity of exposition, we split  the proof of  Theorem \ref{teo_debil} into two  lemmas. The first Lemma  is a direct consequence of Theorem \ref{prop_weakly_pareja}.
\begin{lemma}\label{lem_weakly_2}
	Suppose that \eqref{betacond} holds.  Then for  any $z, x >0$ we have, as $t \to \infty$
	\begin{eqnarray*}
		\mathbb{P}_{(z,x)}\Big(Z_t >0,\  \underline{\xi}_t>0\Big) &\sim& \mathfrak{b}(z,x)  \mathbb{P}^{(e)}_x\left(\underline{\xi}_t > 0\right) \\ &\sim&  \mathfrak{b}(z,x)\frac{A_\gamma}{ \gamma\kappa^{(\gamma)}(0,\gamma)} e^{\gamma x} \widehat{U}^{(\gamma)}(x) t^{-3/2}e^{\Phi_{\xi}(\gamma)t},
	\end{eqnarray*}
	where the constant $A_\gamma$ is defined in \eqref{eq_weakly_constante}.
\end{lemma}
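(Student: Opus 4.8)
The plan is to combine the distributional convergence of $\mathcal U_t = Z_t e^{-\xi_t}$ established in Theorem \ref{prop_weakly_pareja} with the exact asymptotic \eqref{eq_weakly_cota_hirano} for $\mathbb P^{(e)}_x(\underline\xi_t>0)$. First I would write
\[
\mathbb P_{(z,x)}\big(Z_t>0,\ \underline\xi_t>0\big)
= \mathbb P^{(e)}_x\big(\underline\xi_t>0\big)\,
\mathbb P_{(z,x)}\big(Z_t>0\ \big|\ \underline\xi_t>0\big),
\]
so that the statement reduces to showing
\[
\lim_{t\to\infty}\mathbb P_{(z,x)}\big(Z_t>0\ \big|\ \underline\xi_t>0\big)=\mathfrak b(z,x),
\]
after which the second equivalence is immediate from \eqref{eq_weakly_cota_hirano}. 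Since $\{Z_t>0\}=\{Z_te^{-\xi_t}>0\}=\{\mathcal U_t>0\}$, and Theorem \ref{prop_weakly_pareja} already gives $\mathcal U_t\Rightarrow Q$ under $\mathbb P_{(z,x)}(\cdot\mid\underline\xi_t>0)$ with $\mathbb P(Q>0)>0$, the only gap is passing from weak convergence to convergence of the atom-free event $\{\mathcal U_t>0\}$, i.e.\ to $\mathbb P(Q>0)$; and one must identify $\mathbb P(Q>0)$ with the expression $\mathfrak b(z,x)$ in \eqref{eq_constantb1}.

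For the identification of the limit, I would exploit that the Laplace transform of $Q$ is the function $\Upsilon_{z,x}(\lambda)$ computed in the proof of Theorem \ref{prop_weakly_pareja}, namely
\[
\E\big[e^{-\lambda Q}\big]=\Upsilon_{z,x}(\lambda)
=\iiint w^{\mathbf u}\,
\mathbb P^{(\gamma),\uparrow}_{(z,x)}\big(\mathcal U_\infty\in\mathrm d\mathbf u\big)\,
\mathbb P^{(e,\gamma),\downarrow}_{-y}\big(\widehat W_\infty(\lambda)\in\mathrm d w\big)\,\mu_\gamma(\mathrm d y).
\]
Then $\mathbb P(Q>0)=1-\mathbb P(Q=0)=1-\lim_{\lambda\to\infty}\E[e^{-\lambda Q}]=1-\lim_{\lambda\to\infty}\Upsilon_{z,x}(\lambda)$ by dominated convergence, and writing $\widehat W_\infty(\lambda)=\exp\{-v_\infty(0,\lambda,\widehat\xi)\}$ as the $s\to\infty$ limit of $\widehat W_s(\lambda)$ together with the double limit $\lim_{\lambda\to\infty}\lim_{s\to\infty}$ in \eqref{eq_constantb1}, one matches term by term with the formula for $\mathfrak b(z,x)$. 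The passage from $\mathbb P_{(z,x)}(Z_t>0\mid\underline\xi_t>0)$ to $\mathbb P(Q>0)$ requires a little care because $\{\mathcal U_t>0\}$ is not a continuity set of the weak limit a priori; the clean way is to note $\mathbb P_{(z,x)}(Z_t>0\mid\underline\xi_t>0)=\lim_{\lambda\to\infty}\big(1-\E_{(z,x)}[e^{-\lambda\mathcal U_t}\mid\underline\xi_t>0]\big)$ uniformly enough in $t$ (monotone in $\lambda$), and then use the already-proven convergence $\E_{(z,x)}[e^{-\lambda\mathcal U_t}\mid\underline\xi_t>0]\to\Upsilon_{z,x}(\lambda)$ for each fixed $\lambda$, interchanging the $\lambda$ and $t$ limits by monotonicity in $\lambda$.

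I expect the main obstacle to be precisely this interchange of limits: one has convergence in $t$ for each fixed $\lambda$ and wants to conclude convergence of $\mathbb P(Z_t>0\mid\cdot)=\sup_\lambda(1-\E[e^{-\lambda\mathcal U_t}\mid\cdot])$. This is handled by a standard $\epsilon/3$ argument — the maps $\lambda\mapsto 1-\E[e^{-\lambda\mathcal U_t}\mid\underline\xi_t>0]$ are nondecreasing and bounded by $1$, increasing to $\mathbb P(Z_t>0\mid\underline\xi_t>0)$, while their pointwise $t$-limits increase to $\mathbb P(Q>0)$; combined with the fact, already isolated in the proof of Theorem \ref{prop_weakly_pareja}, that $\mathbb P(Q>0)>0$ strictly, this yields $\mathbb P_{(z,x)}(Z_t>0\mid\underline\xi_t>0)\to\mathbb P(Q>0)=\mathfrak b(z,x)$. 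Plugging this into the factorisation above and invoking \eqref{eq_weakly_cota_hirano} completes the proof; the strict positivity $\mathfrak b(z,x)>0$ is exactly \eqref{eq_constantb1}, already established.
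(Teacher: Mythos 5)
Your factorisation $\mathbb P_{(z,x)}(Z_t>0,\underline\xi_t>0)=\mathbb P^{(e)}_x(\underline\xi_t>0)\,\mathbb P_{(z,x)}(Z_t>0\mid\underline\xi_t>0)$ and the subsequent appeal to \eqref{eq_weakly_cota_hirano} coincide exactly with the paper's argument. The divergence is in how you treat the conditional probability. The paper simply reads the needed convergence off the statement of Theorem \ref{prop_weakly_pareja}: equation \eqref{eq_constantb1} there already asserts that $\lim_{t\to\infty}\mathbb P_{(z,x)}(Z_t>0\mid\underline\xi_t>0)$ exists and equals $\mathfrak b(z,x)>0$. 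Citing \eqref{eq_constantb1} and then \eqref{eq_weakly_cota_hirano} is the entire proof, two lines long. You instead treat Theorem \ref{prop_weakly_pareja} as if it only gave the weak convergence $\mathcal U_t\Rightarrow Q$ and the formula for $\mathfrak b(z,x)$, and then set out to re-derive \eqref{eq_constantb1}.

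That re-derivation is not only unnecessary, it contains a genuine gap. The ``standard $\epsilon/3$ argument'' you invoke for interchanging the $\lambda\to\infty$ and $t\to\infty$ limits does not close. Write $f_t(\lambda):=1-\mathbb E_{(z,x)}[e^{-\lambda\mathcal U_t}\mid\underline\xi_t>0]$. Monotonicity in $\lambda$ and pointwise convergence $f_t(\lambda)\to 1-\Upsilon_{z,x}(\lambda)$ in $t$ for each fixed $\lambda$ give only one direction, namely $\liminf_t f_t(\infty)\ge \sup_\lambda(1-\Upsilon_{z,x}(\lambda))=\mathbb P(Q>0)$; they say nothing about $\limsup_t f_t(\infty)$. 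The problem is that the first term of your $\epsilon/3$ decomposition, $f_t(\infty)-f_t(\lambda)=\mathbb E[e^{-\lambda\mathcal U_t}\mathbf 1_{\{\mathcal U_t>0\}}\mid\underline\xi_t>0]$, must be made small \emph{uniformly in $t$} for large $\lambda$, and you supply no such uniform control. The obstruction is real: if, say, $\mathcal U_t=1/t$ almost surely, then $\mathcal U_t\Rightarrow 0$ and $f_t(\lambda)\to 0$ for every fixed $\lambda$, yet $f_t(\infty)=\mathbb P(\mathcal U_t>0)=1$ for all $t$. (Strict positivity of $\mathbb P(Q>0)$ does not repair this; one can spread similar mass escaping to $0$ from above, alongside a fixed atom away from $0$.) The reason the paper's Theorem \ref{prop_weakly_pareja} can assert \eqref{eq_constantb1} is that its proof, via Proposition \ref{prop_weakly_primera} and Lemma \ref{lem_cotasv}, carefully tracks the joint dependence on $\lambda$, $s$ and $t$, which is exactly the uniform-in-$t$ control your argument is missing. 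The fix is immediate: drop the re-derivation and cite \eqref{eq_constantb1}; the rest of your proof then reduces to the paper's.
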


\begin{proof}
	We begin by recalling from Theorem \ref{prop_weakly_pareja} that
	\begin{equation*}
		\lim\limits_{t \to \infty} \mathbb{P}_{(z,x)}\left(Z_t>0\ \Big| \ \underline{\xi}_t> 0\right) = \mathfrak{b}(z,x)>0.
	\end{equation*}
	Thus, appealing to  \eqref{eq_weakly_cota_hirano} we obtain that, 	
	\begin{eqnarray*}
		\mathbb{P}_{(z,x)}\Big(Z_t>0,\  \underline{\xi}_t>0\Big) &=&  \mathbb{P}_{(z,x)}\left(Z_t>0 \ \big|\big. \ \underline{\xi}_t>0\right)  \mathbb{P}_x^{(e)}\left(\underline{\xi}_t>0\right) \\ &\sim& \mathfrak{b}(z,x)\frac{A_\gamma}{ \gamma\kappa^{(\gamma)}(0,\gamma) }e^{\gamma x} \widehat{U}^{(\gamma)}(x) t^{-3/2}e^{\Phi_{\xi}(\gamma)t},
	\end{eqnarray*}
	as $t\to \infty$, which yields the desired result.
\end{proof}

The following lemma tell us that, under the condition that $\ell(\lambda)>C$, for $C>0$,   only  a L\'evy random environment with a high infimum contribute substantially to the non-extinction probability. 

\begin{lemma}\label{lem_cero_weakly}
	Suppose that  $\ell(\lambda)>C$, for $C>0$. Then for  $\delta\in (0,1) $ and  $z, x>0$, we have
	\begin{equation}\label{eq_wekly_cota_cero}
		\lim\limits_{y \to \infty} \limsup_{t\to \infty} t^{3/2} e^{-t\Phi_{\xi}(\gamma)}\mathbb{P}_{(z,x)}\left(Z_t>0,\ \underline{\xi}_{t-\delta}\leq -y\right) = 0.
	\end{equation}
\end{lemma}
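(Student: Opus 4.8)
The plan is to start from the exact identity \eqref{cotasup}, which, after shifting the environment by $x$, gives
\[
\mathbb{P}_{(z,x)}\Big(Z_t>0,\ \underline{\xi}_{t-\delta}\le -y\Big)
=\mathbb{E}^{(e)}\Big[\big(1-e^{-zv_t(0,\infty,\xi)}\big)\mathbf{1}_{\{\inf_{0\le s\le t-\delta}\xi_s\le -y-x\}}\Big],
\]
where I keep only the infimum over $[0,t-\delta]$ (this is why the event is $\{\underline{\xi}_{t-\delta}\le -y\}$ and not $\{\underline{\xi}_{t}\le -y\}$). The first step is to bound the factor $1-e^{-zv_t(0,\infty,\xi)}\le zv_t(0,\infty,\xi)$ and then to control $v_t(0,\infty,\xi)$ from above using the lower bound $\psi_0(\lambda)=\lambda^{1+\beta}\ell(\lambda)\ge C\lambda^{1+\beta}$, equivalently $\Psi_0(\lambda)\ge C\lambda^{\beta}$ for $\lambda$ small. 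Plugging this into the backward differential equation \eqref{eq_BDE} and solving the resulting Bernoulli-type ODE (exactly as in the stable case, see the computation behind \eqref{functG}), one obtains a deterministic-in-shape bound of the form
\[
v_t(0,\infty,\xi)\le \Big(\beta C\int_0^{t}e^{-\beta\xi_u}\,\mathrm{d}u\Big)^{-1/\beta}=\big(\beta C\,\texttt{I}_{0,t}(\beta\xi)\big)^{-1/\beta},
\]
which is the key point where the hypothesis $\ell\ge C$ is used: it replaces the intractable functional $v_t(0,\infty,\xi)$ by (a negative power of) the exponential functional $\texttt{I}_{0,t}(\beta\xi)$ of the L\'evy environment.

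The second step is to split the time interval and use the Markov property of $\xi$ at time $t-\delta$. On the event $\{\underline{\xi}_{t-\delta}\le -y-x\}$ I write $\texttt{I}_{0,t}(\beta\xi)\ge \texttt{I}_{0,t-\delta}(\beta\xi)$ and, conditioning on $\mathcal{F}^{(e)}_{t-\delta}$, I bound
\[
\mathbb{P}_{(z,x)}\Big(Z_t>0,\ \underline{\xi}_{t-\delta}\le -y\Big)
\le z(\beta C)^{-1/\beta}\,\mathbb{E}^{(e)}\Big[\texttt{I}_{0,t-\delta}(\beta\xi)^{-1/\beta}\mathbf{1}_{\{\underline{\xi}_{t-\delta}\le -y-x\}}\Big].
\]
Now I apply the Esscher transform \eqref{eq_medida_Essher} with parameter $\gamma$: writing the indicator-weighted expectation under $\mathbb{P}^{(e,\gamma)}$ produces a factor $e^{-\gamma\xi_{t-\delta}+(t-\delta)\Phi_\xi(\gamma)}$, so
\[
\mathbb{E}^{(e)}\Big[\texttt{I}_{0,t-\delta}(\beta\xi)^{-1/\beta}\mathbf{1}_{\{\underline{\xi}_{t-\delta}\le -y-x\}}\Big]
= e^{(t-\delta)\Phi_\xi(\gamma)}\,\mathbb{E}^{(e,\gamma)}\Big[e^{-\gamma\xi_{t-\delta}}\texttt{I}_{0,t-\delta}(\beta\xi)^{-1/\beta}\mathbf{1}_{\{\underline{\xi}_{t-\delta}\le -y-x\}}\Big].
\]
Under $\mathbb{P}^{(e,\gamma)}$ the process $\xi$ oscillates (since $\Phi_\xi'(\gamma)=0$), which is precisely the situation treated in the critical regime by Bansaye et al.\ \cite{bansaye2021extinction}. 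The remaining task is therefore to show that, after multiplying by $t^{3/2}$, the quantity
\[
t^{3/2}\,\mathbb{E}^{(e,\gamma)}\Big[e^{-\gamma\xi_{t}}\texttt{I}_{0,t}(\beta\xi)^{-1/\beta}\mathbf{1}_{\{\underline{\xi}_{t}\le -y-x\}}\Big]
\]
stays bounded in $t$ and tends to $0$ as $y\to\infty$. This is done by following the argument of Bansaye et al.\ (their Lemma 4.2 / the proof of Theorem 1.2 in \cite{bansaye2021extinction}): one uses the fluctuation-theoretic decomposition of the path at its infimum, the renewal estimate \eqref{eq_weakly_cota_hirano} together with \eqref{grandO}, and the known moment bounds for negative powers of $\texttt{I}_{0,t}(\beta\xi)$ (finiteness of $\mathbb{E}^{(e,\gamma)}[\texttt{I}_{0,\infty}(\beta\xi)^{-p}]$ for the relevant $p>0$, which is available in the oscillating regime), to extract an overall constant times $t^{-3/2}e^{t\Phi_\xi(\gamma)}$ with the constant decaying in $y$; the gain in $y$ comes from the factor $e^{\gamma\underline{\xi}}\le e^{-\gamma(y+x)}$ produced on the event $\{\underline{\xi}_{t}\le -(y+x)\}$ by the change of measure combined with duality at the infimum.

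The main obstacle I anticipate is controlling the negative moment $\texttt{I}_{0,t}(\beta\xi)^{-1/\beta}$ jointly with the low-infimum event $\{\underline{\xi}_{t}\le -y-x\}$: a deep excursion below $0$ can make $\texttt{I}_{0,t}(\beta\xi)$ large (good) but it occupies a time slot during which $\xi$ might be close to $0$ elsewhere, so one cannot simply decouple the two. The way around this is to localise the infimum via the Wiener--Hopf-type splitting at the time of the overall minimum (as in \cite{bansaye2021extinction}): before the minimum one has a path conditioned to stay above its eventual infimum, after the minimum a path conditioned to stay above $0$ started from $0$, and on the pre-minimum part one still has $\texttt{I}\ge\texttt{I}_{\text{post-min}}$, so the negative-moment bound only ever needs to be applied to the post-minimum exponential functional, which is independent of the depth $y$. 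Combining this with the polynomial renewal asymptotics \eqref{eq_weakly_cota_hirano} yields the bound $C(y)\,t^{-3/2}e^{t\Phi_\xi(\gamma)}$ with $C(y)\to0$, and dividing by $t^{-3/2}e^{t\Phi_\xi(\gamma)}$ and letting first $t\to\infty$ and then $y\to\infty$ gives \eqref{eq_wekly_cota_cero}.
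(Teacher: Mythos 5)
Your first two steps coincide with the paper's: starting from the quenched identity \eqref{eq_weakly_cota_noext} and then using $\ell\ge C$ to obtain $v_t(0,\infty,\xi)\le(\beta C\,{\tt I}_{0,t}(\beta\xi))^{-1/\beta}$ is exactly \eqref{eq_cota_v}. The divergence is in what you do next, and there is a genuine problem with your route. The paper does \emph{not} linearize $1-e^{-w}\le w$; it keeps the bounded function $F(w)=1-\exp\{-z(\beta Cw)^{-1/\beta}\}$ and then simply invokes Lemma~4.4 of Li and Xu \cite{li2018asymptotic}, which gives the two-term bound
\[
\limsup_{t\to\infty}t^{3/2}e^{-t\Phi_\xi(\gamma)}\mathbb{E}^{(e)}\big[F({\tt I}_{0,t}(\beta\xi));\ \tau^-_{-\tilde y}\le t-\delta\big]\le\tilde C\,e^{-\tilde y}+\tilde C\,e^{-(1-\gamma)\tilde y}\widehat U^{(\gamma)}(\tilde y).
\]
The two terms encode that \emph{both} $F\le1$ and $F(w)\lesssim w^{-1/\beta}$ are used, according to whether $\tau^-_{-\tilde y}$ is late or early. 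By linearizing $1-e^{-w}\le w$ at the outset you throw away $F\le1$, and your Wiener--Hopf sketch then has to deal with the singularity of ${\tt I}^{-1/\beta}$ near $0$: when the time $m$ of the overall infimum is close to $t-\delta$, the post-minimum exponential functional is over a tiny window, $\texttt{I}_{\text{post-min}}^{-1/\beta}$ blows up, and the contribution is not integrable against the law of $m$ on $[0,t-\delta]$ (since $1/\beta\ge1$). The truncation $F\le1$ is precisely what tames this regime in Li--Xu's lemma, and you would be forced to re-introduce it, essentially re-proving that lemma. You also cite Bansaye et al.\ \cite{bansaye2021extinction} for the final fluctuation bound, but their estimates are for the critical (Spitzer) regime with no $e^{-\gamma\xi_t}$ weight and $t^{-1/2}$ rates; the estimate you need, with the Esscher weight $e^{-\gamma\xi_t}$ producing the $t^{-3/2}e^{t\Phi_\xi(\gamma)}$ rate, is exactly the cited Lemma~4.4 in \cite{li2018asymptotic} (or Hirano's Lemma~1). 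One further small slip: the decay-in-$y$ factor coming out of the Esscher transform plus splitting at the infimum is $e^{(1-\gamma)\underline\xi_{t}}\le e^{-(1-\gamma)\tilde y}$, not $e^{\gamma\underline\xi_t}\le e^{-\gamma\tilde y}$; it arises from pairing $e^{-\gamma\xi_t}$ with the factor $e^{\underline\xi_t}$ extracted from $\texttt{I}_{m,t}^{-1/\beta}=e^{\underline\xi_t}(\int_m^te^{-\beta(\xi_u-\underline\xi_t)}\mathrm{d}u)^{-1/\beta}$. Both exponents are positive so this does not change the conclusion, but the intermediate claim is off.
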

\begin{proof}
	The proof of this lemma follows  similar arguments as those used in the proofs of Lemma 6 in Bansaye et al. \cite{bansaye2021extinction} and   Lemma 4.4  in Li et al. \cite{li2018asymptotic}. 
	
	 From \eqref{eq_Laplace}, we deduce the following identity which holds for all $t> 0$,
	\begin{equation}\label{eq_weakly_cota_noext}
		\mathbb{P}_{(z,x)}\big(Z_t > 0 \ \big|\big. \ \xi \big) = 1-\exp\big\{-z v_t(0,\infty, \xi-\xi_0)\big\}.
	\end{equation}
	Similarly as in Lemma 6 in \cite{bansaye2021extinction}, since $\ell(\lambda)>C$ we can bound the functional $v_t(0,\infty,\xi-\xi_0)$ in terms of the exponential functional of the L\'evy process $\xi$, i.e.
	\begin{equation}\label{eq_cota_v}
		v_t(0,\infty,\xi-\xi_0) \leq \Big(\beta C {\tt I}_{0,t}(\beta (\xi-\xi_0))\Big)^{-1/\beta},
	\end{equation}
	where we recall that 
	\begin{equation}\label{eq_func_expon}
		{\tt I}_{s,t}(\beta (\xi-\xi_0)):= \int_{s}^{t} e^{-\beta (\xi_u-\xi_0)} \mathrm{d} u,\qquad \textrm{for} \quad t\ge s\ge 0.
	\end{equation}
	In other words, for   $0< \delta < t$,  we deduce
	\begin{equation}
		\begin{split}\label{eq_cota_debil1}
			\mathbb{P}_{(z,x)}\Big(Z_t>0,\  \underline{\xi}_{t-\delta}\leq -y\Big) &\leq C(z) \mathbb{E}^{(e)}_x\Big[F({\tt I}_{0,t}(\beta (\xi-\xi_0)));\  \underline{\xi}_{t-\delta}\leq -y\Big]  \\ 
			& = C(z) \mathbb{E}^{(e)}\Big[F({\tt I}_{0,t}(\beta \xi));\  \tau^{-}_{-\tilde{y}}\leq t-\delta\Big],
		\end{split}
	\end{equation}
	where $\tilde{y}=y+x$, $\tau^-_{-\tilde{y}} = \inf\{t\geq 0:\xi_t\leq -\tilde{y}\},$  $C(z)=z(\beta C)^{-1/\beta}\lor 1$ and 
	\[
	F(w)=1 - \exp\{-z(\beta C w)^{-1/\beta}\}.
	\]

	To upper bound the right-hand side of \eqref{eq_cota_debil1}, we recall from  Lemma 4.4 in \cite{li2018asymptotic} that  there exists a positive constant $\tilde C$ such that 
	\begin{equation}
\limsup_{t\to \infty} t^{3/2} e^{-t\Phi_{\xi}(\gamma)}  \mathbb{E}^{(e)}\Big[F({\tt I}_{0,t}(\beta \xi));\  \tau^{-}_{-\tilde{y}}\leq t-\delta\Big] \leq \tilde C e^{-\tilde{y}} + \tilde C e^{-(1-\gamma)\tilde{y}}\widehat{U}^{(\gamma)}(\tilde{y}),
	\end{equation}
	which clearly goes to $0$ as $y$ increases, since $\gamma \in (0,1)$ and $ \widehat{U}^{(\gamma)}(y) = \mathcal{O}(y)$ as $y$ goes to $ \infty$. Hence putting all pieces together allow us to deduce our result.
\end{proof}

We are now ready to deduce our second main result. The next result follows the same arguments as those used in the proof of  Theorem 1.2 in \cite{bansaye2021extinction}, we provide its proof for the sake of completeness. 

\begin{proof} [Proof of Theorem \ref{teo_debil}]
	Let $z,x,\epsilon >0$. From Lemma \ref{lem_cero_weakly}, we deduce that we may choose  $y>0$ such that for $t$ sufficiently large 
	\begin{equation}
		\mathbb{P}_{(z,x)}\Big(Z_t>0,\  \underline{\xi}_{t-\delta} \leq -y\Big)   \leq \epsilon \mathbb{P}_{(z,x)}\Big(Z_t>0,\  \underline{\xi}_{t-\delta} > -y\Big).
	\end{equation} 
	Further, since $\{Z_{t} >0\}\subset \{Z_{t-\delta} >0\}$ for $t$ large, we deduce that
	\begin{eqnarray*}
		\mathbb{P}_{z}(Z_t>0) &=&  \mathbb{P}_{(z,x)}\Big(Z_t>0,\  \underline{\xi}_{t-\delta} >-y\Big)  + \mathbb{P}_{(z,x)}\Big(Z_t>0,\  \underline{\xi}_{t-\delta} \leq -y\Big) \\ &\leq & (1+\epsilon) \mathbb{P}_{(z,x+y)}\Big(Z_{t-\delta}>0,\  \underline{\xi}_{t-\delta} > 0\Big).
	\end{eqnarray*}
	In other words, for every $\epsilon >0$ there exists $y'>0$ such that  
	\[\begin{split}
		(1-\epsilon) t^{3/2}e^{-\Phi_{\xi}(1)t}\mathbb{P}_{(z,y')}\Big(Z_t>0,\ & \underline{\xi}_{t} >0\Big) \leq  t^{3/2}e^{-\Phi_{\xi}(1)t}\mathbb{P}_{z}(Z_t>0) \\ &\leq  (1+\epsilon) t^{3/2}e^{-\Phi_{\xi}(1)t}\mathbb{P}_{(z,y')}\Big(Z_{t-\delta}>0,\ \overline{\xi}_{t-\delta} > 0\Big).
	\end{split}\]
	Now, appealing to Lemma \ref{lem_weakly_2}, we  have 
	\begin{equation*}
		\lim\limits_{t \to \infty}t^{3/2}e^{-\Phi_{\xi}(1)t}\mathbb{P}_{(z,y')}\Big(Z_t>0,\ \underline{\xi}_t >0\Big) = \mathfrak{b}(z,y') \frac{A_\gamma}{\gamma\kappa^{(\gamma)}(0,\gamma)}e^{\gamma y'} \widehat{U}^{(\gamma)}(y').
	\end{equation*}
Hence, we obtain 
	\[\begin{split}
		(1-\epsilon)   \frac{A_\gamma}{\gamma\kappa^{(\gamma)}(0,\gamma)} \mathfrak{b}(z,y') e^{\gamma y'} \widehat{U}^{(\gamma)}(y') &\leq \lim\limits_{t \to \infty} t^{3/2}e^{-t\Phi_{\xi}(1)} \mathbb{P}_{z}(Z_t>0)  \\ &\leq (1+\epsilon)   \frac{A_\gamma}{\gamma\kappa^{(\gamma)}(0,\gamma)} \mathfrak{b}(z,y')  e^{\gamma y'} \widehat{U}^{(\gamma)}(y') e^{-\Phi_{\xi}(1)\delta },
	\end{split}\]
	where $y'$ may depend on $\epsilon$ and $z$. Next, we choose  $y'$ in such a way that it goes to infinity as $\epsilon$ goes to 0.  In other words, for any  $y'=y_{\epsilon}(z)$ which goes to $\infty$ as $\epsilon$ goes to 0, we have 
	\[\begin{split}
		0<	(1-\epsilon) \frac{A_\gamma}{\gamma\kappa^{(\gamma)}(0,\gamma)} &\mathfrak{b}(z,y_{\epsilon}(z)) e^{\gamma y_{ \epsilon}(z)}\widehat{U}^{(\gamma)}(y_{\epsilon}(z)) \leq \lim\limits_{t \to \infty} t^{3/2}e^{-\Phi_{\xi}(1)t}\mathbb{P}_{z}(Z_t>0)\\
		 &\leq (1+\epsilon)   \frac{A_\gamma}{\gamma\kappa^{(\gamma)}(0,\gamma)}  \mathfrak{b}(z,y_{\epsilon}(z)) e^{\gamma y_{\epsilon}(z)}\widehat{U}^{(\gamma)}(y_{\epsilon}(z))  e^{-\Phi_{\xi}(1)\delta } < \infty.
	\end{split}
	\]
	Therefore, letting $\epsilon\to 0$, we get
	\[\begin{split}
		0<	\liminf_{\epsilon \to 0}  (1-\epsilon) & \frac{A_\gamma}{\gamma\kappa^{(\gamma)}(0,\gamma)} \mathfrak{b}(z,y_{\epsilon}(z)) e^{\gamma y_{ \epsilon}(z)}\widehat{U}^{(\gamma)}(y_{\epsilon}(z)) \leq\lim\limits_{t \to \infty} t^{3/2}e^{-\Phi_{\xi}(1)t}\mathbb{P}_{z}(Z_t>0)\\  &\leq  \limsup_{\epsilon \to 0}(1+\epsilon)   \frac{A_\gamma}{\gamma\kappa^{(\gamma)}(0,\gamma)} \mathfrak{b}(z,y_{\epsilon}(z)) e^{\gamma y_{\epsilon}(z)}U^{(\gamma)}(y_{ \epsilon}(z)) e^{-\Phi_{\xi}(1)\delta }< \infty.
	\end{split}\]
	Since $\delta$ can be taken arbitrary close to 0,  we deduce 
	\begin{equation*}
		\lim\limits_{t \to \infty} t^{3/2}e^{-\Phi_{\xi}(1)t} \mathbb{P}_{z}(Z_t>0)=\mathfrak{B}(z),
	\end{equation*}
	where
	$$ \mathfrak{B}(z):= \frac{A_\gamma}{\gamma\kappa^{(\gamma)}(0,\gamma)}\lim_{\epsilon \to 0}  \mathfrak{b}(z,y_{\epsilon}(z))e^{\gamma y_{\epsilon}(z)}\widehat{U}^{(\gamma)}(y_{\epsilon}(z)) \in (0,\infty). $$
	Thus the proof is completed.	
\end{proof}

\subsection{The stable case}\label{sec:stable}
Here, we compute the constant $\mathfrak{B}(z)$ in the stable case and verify that it coincides with the constant that appears in Theorem 5.1 in Li and Xu \cite{li2018asymptotic}. To this end, we recall that in the stable case we have  $\psi_0(\lambda)= C\lambda^{1+\beta}$ with $\beta\in (0,1)$ and $C>0$. Moreover, the  backward differential equation \eqref{eq_BDE} can be solved explicitly (see e.g. Section 5 in \cite{he2018continuous}), that is for any $\lambda\geq 0$ and $s\in [0,t]$,
\begin{equation}\label{eq:vstable}
	v_t(s,\lambda, \xi) = \Big(\lambda^{-\beta} + \beta C\texttt{I}_{s,t}(\beta \xi)\Big)^{-1/\beta},
\end{equation}
where $\texttt{I}_{s,t}(\beta \xi)$  denotes the exponential functional of  the L\'evy process $\beta \xi$ defined in \eqref{eq_expfuncLevy}.

Next, we observe that, for any $z,x >0$, the constant $\mathfrak{b} (z,x)$ defined in Theorem \ref{prop_weakly_pareja} can be rewritten as follows
\begin{equation*}
	\mathfrak{b}(z,x) = 1-  \lim\limits_{\lambda \to \infty} \lim\limits_{s\to \infty}  \gamma\kappa^{(\gamma)}(0,\gamma)  \int_{0}^{\infty} e^{-\gamma y} U^{(\gamma)} (y)  R_{s,\lambda}(z,x,y){\rm d} y,
\end{equation*}
where
\[ R_{s,\lambda}(z,x,y) :=  \int_{0}^{1} \int_{0}^{\infty} w^{\textbf{u}} \mathbb{P}^{(\gamma),\uparrow}_{(z,x)}\big(\mathcal{U}_s \in \mathrm{d} \textbf{u}\big) \mathbb{P}^{(e,\gamma),\downarrow}_{-y}\big(\widehat{W}_s(\lambda) \in \mathrm{d} w\big).\]
In order to find an explicit expression of the previous double integral we use Proposition 3.3. in \cite{bansaye2021extinction} which claims that for any $z,x>0$ and $\theta \geq 0$, we have
$$\mathbb{E}_{(z,x)}^{(\gamma), \uparrow} \Big[\exp\left\{-\theta Z_s e^{-\xi_s}\right\} \Big] = \mathbb{E}^{(e, \gamma), \uparrow}_x\Big[\exp\left\{-z v_s(0, \theta e^{-x}, \xi-x)\right\}\Big]. $$
It follows that 
\begin{equation*}
	\begin{split}
		R_{s,\lambda}(z,x,y)&=  \int_{0}^{1} \mathbb{E}_{(z,x)}^{(\gamma), \uparrow}\left[w^{\mathcal{U}_s}\right] \mathbb{P}^{(e,\gamma),\downarrow}_{-y}\big(\widehat{W}_s(\lambda) \in \mathrm{d} w\big) \\ 
		& = \int_{0}^{1} \mathbb{E}_{(z,x)}^{(\gamma), \uparrow}\Big[\exp\left\{\log(w)Z_se^{-\xi_s}\right\}\Big] \mathbb{P}^{(e,\gamma),\downarrow}_{-y}\big(\widehat{W}_s(\lambda) \in \mathrm{d} w\big)  \\ 
		& = \int_{0}^{1} \mathbb{E}_{x}^{(e, \gamma), \uparrow}\Big[\exp\left\{-z v_s(0,- \log(w)e^{-x}, \xi-x)\right\}\Big] \mathbb{P}^{(e,\gamma),\downarrow}_{-y}\big(\widehat{W}_s(\lambda) \in \mathrm{d} w\big)\\ 
		& =  \int_{0}^{\infty} \int_{0}^{\infty} e^{-ze^{-x}(\beta C w +\beta C u)^{-1/\beta}}  \mathbb{P}^{(\gamma),\uparrow}_{(z,x)}\big(\texttt{I}_{0,\infty}(\beta \xi) \in \mathrm{d} w\big) \mathbb{P}^{(e,\gamma),\downarrow}_{-y}\Big(\texttt{I}_{0,\infty}(\beta \widehat{\xi}) \in \mathrm{d} u\Big),
	\end{split}
\end{equation*}
where in the last equality we have used \eqref{eq:vstable}. Thus putting all pieces together and appealing to the Dominated Convergence Theorem, we deduce 
\begin{equation*}
	\begin{split}
		\mathfrak{b} (z,x)& = 1- \gamma\kappa^{(\gamma)}(0,\gamma)  \int_{0}^{\infty} e^{-\gamma y} U^{(\gamma)} (y)   \lim\limits_{\lambda \to \infty} \lim\limits_{s\to \infty} R_{s,\lambda}(z,x,y){\rm d} y \\ & =  \gamma\kappa^{(\gamma)}(0,\gamma)  \int_{0}^{\infty} e^{-\gamma y} U^{(\gamma)} (y) G_{z,x}(y){\rm d} y,
	\end{split}
\end{equation*}
where $G_{z,x}(\cdot)$ is as  in \eqref{functG}. Therefore, we have that the limiting constant in the stable case is given by
\begin{equation*}
	\begin{split}
		\mathfrak{B}(z)&:=\frac{A_\gamma}{\gamma\kappa^{(\gamma)}(0,\gamma)} \lim_{x\to \infty} \mathfrak{b}(z,x)e^{\gamma x}\widehat{U}^{(\gamma)}(x)\\ & =    A_\gamma \lim\limits_{x\to \infty} e^{\gamma x} \widehat{U}^{(\gamma)}(x)\int_{0}^{\infty} e^{-\gamma y} U^{(\gamma)}(y)G_{z,x}(y){\rm d} y,
	\end{split}
\end{equation*}
as expected.

~\\

{\bf Acknowledgements:} N.C.-T. acknowledges support from CONACyT-MEXICO grant no. 636133 and financial support from the University of G\"ottingen. This work was concluded whilst N.C.-T. was visiting CIMAT whom she also acknowledges for their hospitality.


\bibliographystyle{abbrv}
\bibliography{references}

\end{document}